\documentclass[a4paper,english,fontsize=11pt,parskip=half]{scrartcl}
\usepackage{babel}
\usepackage[utf8]{inputenc}
\usepackage[T1]{fontenc}
\usepackage[a4paper,left=20mm,right=20mm,top=30mm,bottom=30mm]{geometry}
\usepackage{amsmath}
\usepackage{amsthm}
\usepackage{amssymb}
\usepackage{enumerate}
\usepackage{thmtools}
\usepackage{mathtools}
\mathtoolsset{centercolon}
\usepackage[bookmarks=false,
            pdftitle={Fusion systems in representation theory},
            pdfauthor={Benjamin Sambale},
            pdfkeywords={introduction, fusion systems, finite groups, blocks},
            pdfstartview={FitH}]{hyperref}

\newtheorem{Thm}{Theorem}[section]

\newtheorem{Cor}[Thm]{Corollary}
\newtheorem{Con}[Thm]{Conjecture}
\theoremstyle{definition}
\newtheorem{Def}[Thm]{Definition}
\newtheorem{Ex}[Thm]{Example}

\numberwithin{equation}{section}

\setcounter{MaxMatrixCols}{25}
\allowdisplaybreaks[1]

\renewcommand{\phi}{\varphi}
\newcommand{\C}{\mathrm{C}}
\newcommand{\N}{\mathrm{N}}
\newcommand{\Z}{\mathrm{Z}}
\newcommand{\FC}{\mathcal{F}}
\newcommand{\EC}{\mathcal{E}}
\newcommand{\UC}{\mathcal{U}}
\newcommand{\ZZ}{\mathbb{Z}}

\newcommand{\FF}{\mathbb{F}}

\newcommand{\cohom}{\operatorname{H}}
\newcommand{\Aut}{\mathrm{Aut}}
\newcommand{\Inn}{\mathrm{Inn}}
\newcommand{\Out}{\mathrm{Out}}
\newcommand{\Sym}{\operatorname{Sym}}
\newcommand{\pcore}{\mathrm{O}}
\newcommand{\GL}{\mathrm{GL}}
\newcommand{\ASL}{\mathrm{ASL}}
\newcommand{\SL}{\operatorname{SL}}
\newcommand{\PSU}{\operatorname{PSU}}
\newcommand{\PSL}{\operatorname{PSL}}
\newcommand{\PGL}{\operatorname{PGL}}

\newcommand{\Sz}{\operatorname{Sz}}
\newcommand{\Irr}{\mathrm{Irr}}
\newcommand{\IBr}{\mathrm{IBr}}

\newcommand{\J}{\mathrm{J}}
\newcommand{\ZJ}{\mathrm{ZJ}}
\newcommand{\id}{\mathrm{id}}

\newcommand{\foc}{\mathfrak{foc}}
\newcommand{\hyp}{\mathfrak{hyp}}

\newcommand{\Syl}{\operatorname{Syl}}

\newcommand{\Hom}{\operatorname{Hom}}

\newcommand{\Spin}{\operatorname{Spin}}
\newcommand{\Obj}{\mathrm{Obj}}

\AtBeginDocument{%fix textct missing glyphs
  \DeclareFontShape{T1}{cmr}{m}{scit}{<->ssub*cmr/m/sc}{}%
}

\makeatletter %prevent pagebreak before lists
\@beginparpenalty=10000
\makeatother

\title{Fusion systems in representation theory}
\subtitle{Three lectures at the University of Valencia}
\author{Benjamin Sambale\footnote{Leibniz Universität Hannover, Germany, \href{mailto:sambale@math.uni-hannover.de}{sambale@math.uni-hannover.de}}}
\date{February 2023}

\begin{document}
\frenchspacing
\maketitle

\renewcommand{\sectionautorefname}{Section}
\section{Fusion in groups}\label{sec1}

\begin{Def}
Let $H\le G$ be finite groups. Elements $x,y\in H$ (or subsets) are called \emph{fused} in $G$ if they are conjugate in $G$, but not in $H$. 
\end{Def}

\begin{Ex}\label{exfused}\hfill
\begin{enumerate}[(i)]
\item The permutations $(123),(132)\in A_3$ are fused in $S_3$.

\item\label{ex2} Let $X,Y\le H$ be isomorphic subgroups via an isomorphism $\phi\colon X\to Y$. We embed $H$ into $G:=\Sym(H)$ via the regular representation $\sigma\colon H\to G$, $h\mapsto\sigma_h$ where $\sigma_h(g)=hg$ for $g,h\in H$. 
Let $\{r_1,\ldots,r_n\}$ and $\{s_1,\ldots,s_n\}$ be systems of representatives for the right cosets in $H$ of $X$ and $Y$, respectively.\footnote{Using Hall's marriage theorem one can show that a \emph{common} system of representatives exists for these right cosets. This is not needed here.}
Let $\hat\phi\in G$ with $\hat\phi(xr_i):=\phi(x)s_i$ for all $x\in X$ and $1\le i\le n$.
For $x\in X$, $y\in Y$ and $1\le i\le n$ we have
\[(\hat\phi\sigma_x\hat\phi^{-1})(ys_i)=(\hat\phi\sigma_x)(\phi^{-1}(y)r_i)=\hat\phi(x\phi^{-1}(y)r_i)=\phi(x)ys_i=\sigma_{\phi(x)}(ys_i)\]
and $\hat\phi\sigma_x\hat\phi^{-1}=\sigma_{\phi(x)}$.
Hence, $\phi$ is realized by the conjugation with $\hat{\phi}$ in $G$.\footnote{This construction fails for infinite groups since for example the isomorphism $\ZZ\to 2\ZZ$ does not extend to $\ZZ\to\ZZ$. In those situations one can use HNN-\emph{extensions}.} 

\item A consequence of \eqref{ex2} is that elements $x,y\in H$ of the same order are conjugate in some finite group $G\ge H$. 

\end{enumerate}
\end{Ex}

\textbf{Goal:} Find “small” subgroups $K\supseteq H$ \emph{controlling fusion} in $H$, i.\,e. $x,y\in H$ are fused in $G$ if and only if $x,y$ are fused in $K$.

\textbf{Main interest:} $H\in\Syl_p(G)$.

In the following let $P\in\Syl_p(G)$. Let $\pcore_{p'}(G)$ be the largest normal $p'$-subgroup of $G$.
If no elements of $P$ are fused in $G$, then $G$ is called \emph{$p$-nilpotent}.

\begin{Thm}[\textsc{Frobenius}]\label{thmfrob}
The following assertions are equivalent:
\begin{enumerate}[(1)]
\item $G$ is $p$-nilpotent.
\item\label{frob2} $\N_G(Q)/\C_G(Q)$ is a $p$-group for all $Q\le P$.
\item $G=\pcore_{p'}(G)P$.
\end{enumerate}
\end{Thm}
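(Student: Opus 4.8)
The plan is to run the cycle $(1)\Rightarrow(2)\Rightarrow(3)\Rightarrow(1)$, in which the two genuinely substantial steps are $(1)\Rightarrow(2)$ (a fusion-control statement) and $(2)\Rightarrow(3)$ (a transfer statement). I would begin with the cheap step $(3)\Rightarrow(1)$. Writing $N:=\pcore_{p'}(G)$, the hypothesis $G=NP$ together with $N\cap P=1$ means that the restriction to $P$ of the quotient map $G\to G/N$ is an isomorphism $P\xrightarrow{\sim}G/N$. Given $x,y\in P$ with $y=x^g$, I would read this equation in $G/N$, choose $u\in P$ mapping to the coset $gN$, and use injectivity of $P\to G/N$ to upgrade the congruence $y\equiv x^u$ modulo $N$ to the honest equality $y=x^u$ with $u\in P$. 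Thus $G$-conjugate elements of $P$ are already $P$-conjugate, i.e. $G$ is $p$-nilpotent.

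For $(1)\Rightarrow(2)$ I would argue by contraposition: if $\N_G(Q)/\C_G(Q)$ fails to be a $p$-group for some $Q\le P$, it contains a nontrivial $p'$-element, whose $p'$-lift is a $p'$-element $g\in\N_G(Q)$ acting nontrivially on $Q$; since $\Inn(Q)$ is a $p$-group this action is even non-inner. By coprime action $[Q,g]\ne1$, so $g$ moves some $x\in Q\le P$, and $x,x^g\in Q$ are $G$-conjugate. The point is to arrange that they are \emph{not} $P$-conjugate, which is exactly what ``no fusion'' would forbid. To force this I would choose $Q$ of maximal order among the offending subgroups and invoke Burnside's fusion lemma — if two elements of $\Z(P)$ are $G$-conjugate they are already $\N_G(P)$-conjugate, by a Sylow argument inside the relevant centralizer — after first manoeuvring the moved element into the centre of a full Sylow subgroup. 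The delicate part here, and the reason one cannot simply work inside $Q\langle g\rangle$, is that class-preserving outer automorphisms exist: ruling out $P$-conjugacy genuinely requires the ambient Sylow geometry, not just the local group.

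The heart of the matter is $(2)\Rightarrow(3)$, and this is where I expect the real obstacle. The cleanest route is the hyperfocal subgroup theorem, which identifies $P\cap\pcore^p(G)$ with the subgroup generated by all commutators $[x,g]$ where $x\in Q\le P$ and $g$ is a $p'$-element of $\N_G(Q)$. Hypothesis $(2)$ says precisely that each such $g$, being a $p'$-element mapping into the $p$-group $\N_G(Q)/\C_G(Q)$, lies in $\C_G(Q)$; hence every generating commutator is trivial and $P\cap\pcore^p(G)=1$. As $\pcore^p(G)\trianglelefteq G$, this intersection is a Sylow $p$-subgroup of $\pcore^p(G)$, so $\pcore^p(G)$ is a $p'$-group; since $G/\pcore^p(G)$ is a $p$-group we conclude $G=\pcore^p(G)P=\pcore_{p'}(G)P$, which is $(3)$.

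The catch is that the hyperfocal subgroup theorem is itself a transfer-theoretic result and must be proved or cited. To stay elementary I would instead induct on $|G|$: hypothesis $(2)$ passes to each proper normalizer $\N_G(Q)$ with respect to its Sylow subgroup $\N_P(Q)$, which is therefore $p$-nilpotent by induction, and one feeds this into the focal subgroup theorem (via Grün's theorems) to relocate all $G$-fusion of $P$ into a proper subgroup and build the complement by transfer; the base computation is Burnside's, namely that in the absence of fusion the transfer $G\to P/P'$ restricts on $P$ to the power map $\bar x\mapsto\bar x^{[G:P]}$, an automorphism of the $p$-group $P/P'$ because $[G:P]$ is prime to $p$. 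Either way the transfer homomorphism is unavoidable, and verifying that the local automizers $\N_G(Q)/\C_G(Q)$ control all of $G$-fusion in $P$ is the step that carries the real weight.
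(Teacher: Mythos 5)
The paper states this classical theorem without proof, so there is no internal argument to compare against; judged on its own terms, your proposal is a sound roadmap whose two substantial implications are each left with their hardest step unperformed. Your $(3)\Rightarrow(1)$ is complete and correct. For $(1)\Rightarrow(2)$, however, the entire difficulty is the one you flag yourself: given a $p'$-element $g\in\N_G(Q)\setminus\C_G(Q)$ you must exhibit $x\in Q$ such that $x$ and $x^g$ are \emph{not} $P$-conjugate, and the proposed manoeuvre (maximal $Q$, push into $\Z(P)$, Burnside) is never executed. For $Q=P$ it does work cheaply: a coprime automorphism of $P$ preserving every $P$-class is trivial on $P/P'$, hence on $P/\Phi(P)$, hence trivial. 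But for $Q<P$ the elements of $P$ realizing the individual conjugacies $x^g\sim_P x$ need not normalize $Q$, and I do not see how your sketch closes this. The implication is better avoided altogether: $(3)\Rightarrow(2)$ is a two-line computation — with $N:=\pcore_{p'}(G)$ one has $[\N_G(Q)\cap N,Q]\le N\cap Q=1$, so $\N_G(Q)\cap N\le\C_G(Q)$ and $\N_G(Q)/\C_G(Q)$ is a quotient of a subgroup of $G/N\cong P$ — after which the theorem reduces to the two genuinely hard implications $(1)\Rightarrow(3)$ and $(2)\Rightarrow(3)$.

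For $(2)\Rightarrow(3)$ your first route has, besides the circularity you acknowledge, a technical mismatch: the hyperfocal subgroup theorem as stated in this paper generates $\hyp_G(P)$ by $xy^{-1}$ with $x,y\in P$ conjugate by a $p'$-element \emph{of $G$}, and hypothesis (2) does not annihilate such a generator, since that element need not normalize any subgroup containing $x$. You need the local description (generators $[x,g]$ with $x\in Q\le P$ and $g$ a $p'$-element of $\N_G(Q)$), i.e. the identification $\hyp_G(P)=\hyp(\FC_P(G))$ implicit in \autoref{exfoc}, which is Puig's hyperfocal theorem and lies strictly deeper than Frobenius. Your fallback is the standard textbook proof, but the step carrying all the weight — the fusion lemma asserting that hypothesis (2), being inherited by subgroups, forces any two $G$-conjugate elements of $P$ to be $P$-conjugate, whence the transfer $V\colon G\to P/P'$ satisfies $V(x)=x^{[G:P]}P'$ and induction applied to $\Ker V$ finishes — is precisely the argument you do not supply; and some comparable local-to-global fusion analysis is equally unavoidable for the bridge out of condition (1). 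Until one of these is written down, the proposal remains an accurate table of contents for the proof rather than the proof itself.
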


\begin{Ex}
Every $p'$-group and every nilpotent group is $p$-nilpotent.
\end{Ex}

\begin{Thm}[\textsc{Burnside}]
$\N_G(P)$ controls fusion in $\Z(P)$.
\end{Thm}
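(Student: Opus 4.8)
The plan is to prove the nontrivial direction: whenever $x,y\in\Z(P)$ are conjugate in $G$, they are already conjugate in $\N_G(P)$ (the reverse implication is immediate from $\N_G(P)\le G$). So I would fix $g\in G$ with $y=gxg^{-1}$ and try to manufacture from $g$ an element of $\N_G(P)$ that realizes the same fusion.

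The crucial structural observation is that, since $x,y\in\Z(P)$, the whole Sylow subgroup $P$ centralizes both of them, that is $P\le\C_G(x)$ and $P\le\C_G(y)$. Conjugating the second relation by $g^{-1}$ and using $\C_G(y)=g\C_G(x)g^{-1}$ shows that $g^{-1}Pg\le\C_G(x)$ as well. Thus $P$ and $g^{-1}Pg$ are two $p$-subgroups sitting inside the single group $\C_G(x)$.

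Now comes the key step: because $P\in\Syl_p(G)$ and $\C_G(x)\le G$, any $p$-subgroup of $\C_G(x)$ of order $|P|$ is automatically a Sylow $p$-subgroup of $\C_G(x)$; hence both $P$ and $g^{-1}Pg$ are Sylow $p$-subgroups of $\C_G(x)$. Applying Sylow's conjugacy theorem inside $\C_G(x)$ yields some $c\in\C_G(x)$ with $cg^{-1}Pgc^{-1}=P$. I would then set $n:=gc^{-1}$ and verify the two required properties: the identity $n^{-1}Pn=cg^{-1}Pgc^{-1}=P$ gives $n\in\N_G(P)$, while $nxn^{-1}=gc^{-1}xcg^{-1}=gxg^{-1}=y$ (using $c\in\C_G(x)$) shows that $n$ realizes the fusion $x\mapsto y$ inside $\N_G(P)$.

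The main obstacle here is conceptual rather than computational: one must recognize that the hypothesis $x\in\Z(P)$ is exactly what forces the \emph{entire} group $P$ into $\C_G(x)$, so that the decisive application of Sylow's theorem takes place inside the centralizer $\C_G(x)$ and not in $G$ itself. Once this relocation is made, the remainder is a short transport-of-structure calculation.
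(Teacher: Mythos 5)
Your proof is correct and follows essentially the same argument as the paper: both conjugates of $P$ land in a single centralizer (you use $\C_G(x)$, the paper uses $\C_G(y)$), Sylow's conjugacy theorem is applied inside that centralizer, and the resulting product normalizes $P$ while still carrying $x$ to $y$. The two versions are mirror images of each other, so no further comparison is needed.
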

\begin{proof}
Let $x,y\in\Z(P)$ and $g\in G$ such that $^gx:=gxg^{-1}=y$. Then $P\le\C_G(y)$ and $^gP\le{}^g\C_G(x)=\C_G({}^gx)=\C_G(y)$. By Sylow's theorem, there exists $c\in\C_G(y)$ such that $^{cg}P=P$. Now $h:=cg\in\N_G(P)$ such that $^hx={}^c({}^gx)={}^cy=y$. 
\end{proof}

\begin{Thm}[$\Z^*$-theorem\protect\footnote{It is often assumed that $z$ has order $p$, but this is unnecessary.}]
If $z\in\Z(P)$ is not fused to any other element in $P$, then $G=\pcore_{p'}(G)\C_G(z)$.
\end{Thm}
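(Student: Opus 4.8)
The plan is to induct on $|G|$ and reduce everything to proving that $z$ becomes central once the $p'$-core is trivial. First I would pass to $\overline{G}:=G/\pcore_{p'}(G)$. Here $\overline{P}:=P\pcore_{p'}(G)/\pcore_{p'}(G)\cong P$ is a Sylow $p$-subgroup, $\overline{z}\in\Z(\overline{P})$, and since $G$-conjugacy of $p$-elements is detected modulo $\pcore_{p'}(G)$, the image $\overline{z}$ inherits the hypothesis that it is fused to no other element of $\overline{P}$. So we may assume $\pcore_{p'}(G)=1$, and then the claim becomes simply $z\in\Z(G)$; conversely, using that $\C_{\overline{G}}(\overline{z})=\overline{\C_G(z)}$ for the $p$-element $z$, the statement $\overline{z}\in\Z(\overline{G})$ unravels back into $G=\pcore_{p'}(G)\C_G(z)$. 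Replacing $G$ by the normal subgroup generated by the conjugacy class of $z$, I would also reduce to $G=\langle{}^{g}z:g\in G\rangle$ and argue by minimal counterexample. I should stress at the outset that I only expect to push this through by elementary means for $p=2$ (this is Glauberman's theorem, and the restriction to involutions in his original statement is removable); for odd $p$ no proof avoiding the classification of finite simple groups is known.

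The first genuine step is to upgrade the fusion hypothesis to a trivial-intersection statement: $\{{}^{g}z:g\in G\}\cap\C_G(z)=\{z\}$. This is the one part I can do cleanly. Put $H:=\C_G(z)$; then $z\in\Z(H)$ and $P\in\Syl_p(H)$ because $P\le H\le G$ with $P\in\Syl_p(G)$. If some conjugate $d={}^{g}z$ lies in $H$, then by Sylow's theorem an $H$-conjugate ${}^{h}d$ (with $h\in H$) lies in $P$; as ${}^{h}d$ is $G$-conjugate to $z$ and lies in $P$, the hypothesis forces ${}^{h}d=z$, and since $h\in H$ centralises $z$ we conclude $d={}^{h^{-1}}z=z$. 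In particular no two distinct conjugates of $z$ commute, so the class of $z$ meets every centraliser in the most restrictive possible way.

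The heart of the proof is then representation-theoretic, and this is where I expect all the difficulty to lie. The idea is to study the class sum $\widehat{D}$ of $D:=\{{}^{g}z:g\in G\}$ as a central element of a suitable group algebra in characteristic $p$, to compute the associated central-character values (algebraic integers) on the blocks of $G$, and to exploit the strong trivial-intersection property above: it makes the structure constants counting factorisations of elements as products of two members of $D$ extremely rigid, and these combinatorial constraints, reduced modulo $p$ and fed through Brauer's theory of the principal block, are what ultimately force $z\in\Z(G)$, contradicting the choice of counterexample. Converting the trivial-intersection condition into this block-theoretic conclusion is precisely the obstacle: none of the transfer arguments underlying the theorems of Burnside and Frobenius above reach it, and for odd $p$ even this machinery is insufficient and one must invoke the classification.
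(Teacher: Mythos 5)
The paper offers no proof of the $\Z^*$-theorem beyond attributing the $p=2$ case to Glauberman's representation-theoretic argument and the $p>2$ case to the CFSG, and your proposal lands in exactly the same place: the reductions you do carry out (passing to $G/\pcore_{p'}(G)$, using that conjugacy of $p$-elements and centralizers of $p$-elements behave well modulo a normal $p'$-subgroup, and deriving the trivial-intersection property $\{{}^gz:g\in G\}\cap\C_G(z)=\{z\}$ from the fusion hypothesis via Sylow's theorem inside $\C_G(z)$) are correct, while the block-theoretic core you explicitly leave open is precisely the part the paper also declines to prove. So this matches the paper's treatment; just be clear that what you have written is an accurate map of where the difficulty lives together with some correct preliminary reductions, not a proof.
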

\begin{proof}
Glauberman proved the theorem for $p=2$ using representation theory, while the only known proof for $p>2$ is via the classification of finite simple groups (CFSG for short).
\end{proof}

\begin{Ex}
If $P$ is a (generalized) quaternion $2$-group, then $G=\pcore_{2'}(G)\C_G(\Z(P))$ since $\Z(P)$ is generated by the unique involution in $P$.\footnote{This special case of the $\Z^*$-theorem was first proved by Brauer--Suzuki.}
\end{Ex}

Goldschmidt and Flores--Foote classified more generally groups $G$ with $A\unlhd P$ such that no element of $A$ is fused to an element of $P\setminus A$ (i.\,e. $A$ is \emph{strongly closed} in $P$).
Let 
\[\J(P):=\langle A\le P:A\text{ abelian of maximal order}\rangle\] 
be the \emph{Thompson subgroup} of $P$.\footnote{Several non-equivalent definitions of the Thompson subgroup are used in the literature.}

\begin{Thm}[\textsc{Thompson}]
If $p\ge 5$, then $G$ is $p$-nilpotent if and only if $\N_G(\J(P))/\C_G(\J(P))$ is a $p$-group.
\end{Thm}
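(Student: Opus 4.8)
The forward direction is immediate: if $G$ is $p$-nilpotent, then by condition~(2) of Frobenius's Theorem~\ref{thmfrob} the quotient $\N_G(Q)/\C_G(Q)$ is a $p$-group for \emph{every} $Q\le P$, and $\J(P)$ is one such subgroup. All the content lies in the converse, which I would attack by a minimal counterexample argument feeding into Glauberman's $\ZJ$-theorem.

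So assume $\N_G(\J(P))/\C_G(\J(P))$ is a $p$-group but $G$ is not $p$-nilpotent, with $|G|$ minimal. The first step is to arrange $\pcore_{p'}(G)=1$. Writing $N:=\pcore_{p'}(G)$ and passing to $\overline G:=G/N$, one has $\overline P\cong P$ (as $P\cap N=1$), hence $\J(\overline P)=\overline{\J(P)}$, and one checks that the hypothesis is inherited by $\overline G$ — the delicate point being to identify $\C_{\overline G}(\J(\overline P))$ with the image of $\C_G(\J(P))$ and $N$. Since $\pcore_{p'}(\overline G)=1$ forces a $p$-nilpotent $\overline G$ to be a $p$-group and hence $G=NP$ to be $p$-nilpotent, minimality gives $N=1$. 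The second step is to observe that every subgroup $H$ with $P\le H<G$ again satisfies the hypothesis: here $P\in\Syl_p(H)$, the Thompson subgroup of this Sylow subgroup is still $\J(P)$, and $\N_H(\J(P))/\C_H(\J(P))$ embeds into the $p$-group $\N_G(\J(P))/\C_G(\J(P))$. By minimality every such $H$ is $p$-nilpotent, and from this ``locally $p$-nilpotent'' structure one deduces that $G$ is $p$-constrained with $\pcore_p(G)\neq1$.

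The heart of the proof is now Glauberman's $\ZJ$-theorem: for an odd prime $p$, a $p$-stable and $p$-constrained group with trivial $\pcore_{p'}$ satisfies $\ZJ(P)=\Z(\J(P))\unlhd G$. The role of the hypothesis $p\ge5$ is precisely to supply $p$-stability: the only obstruction to $p$-stability is a section behaving like $\SL_2(p)$ on its natural module, and for $p\ge5$ this is excluded inside a $p$-constrained configuration. Applying the theorem to $G$ yields $\ZJ(P)\unlhd G$, so $\N_G(\ZJ(P))=G$; this nontrivial normal $p$-subgroup, together with the fact (from the reduction) that every proper overgroup of $P$ is $p$-nilpotent, forces a normal $p$-complement in $G$ — for instance via Burnside's theorem controlling the fusion of $\Z(P)$ and condition~(2) of Frobenius's theorem — contradicting the choice of $G$.

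The main obstacle is exactly the passage from the $\J$-hypothesis to the $\ZJ$-conclusion, which is where $p$-stability, and hence $p\ge5$, is indispensable. The reductions to $\pcore_{p'}(G)=1$ and to $p$-constraint are routine in spirit but must be carried out by hand, since the Thompson subgroup does not behave well under quotients and each inheritance claim therefore has to be verified explicitly rather than assumed. An alternative to invoking the $\ZJ$-theorem as a black box would be to run Thompson's original factorization argument, using the Thompson replacement theorem to produce $G=\C_G(\Z(P))\,\N_G(\J(P))$ and then playing the two factors against the hypothesis; this is more self-contained but reproduces the same $\SL_2(p)$-analysis at its core, so I would prefer to isolate that difficulty in the $\ZJ$-theorem.
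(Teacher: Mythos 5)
The paper states Thompson's theorem without proof (as it does most results in Section~1), so there is no argument of the author's to measure yours against; I can only assess the sketch on its own terms. Your forward direction via \autoref{thmfrob}(2), the reduction to $\pcore_{p'}(G)=1$ by coprime action, and the observation that every $H$ with $P\le H<G$ inherits the hypothesis (since $P\in\Syl_p(H)$ and $\J(P)$ is literally the same subgroup, $\N_H(\J(P))/\C_H(\J(P))$ embeds into $\N_G(\J(P))/\C_G(\J(P))$) are all correct, and the overall strategy --- minimal counterexample feeding into $p$-stability and the $\Z(\J(P))$ machinery --- is the historically right one.

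The difficulty is that the two steps you label as the ones ``all the content lies in'' are asserted rather than argued, and neither is routine. First, the passage from ``every proper overgroup of $P$ is $p$-nilpotent'' to ``$G$ is $p$-constrained with $\pcore_p(G)\ne 1$'' is not a deduction as written: \autoref{thmfrob} only provides \emph{some} $p$-subgroup $Q$ with $\N_G(Q)/\C_G(Q)$ not a $p$-group, and this $\N_G(Q)$ need not contain $P$ or any conjugate of it, so the $p$-nilpotency of overgroups of $P$ says nothing about it. The genuine argument chooses such a $Q$ with $\lvert\N_G(Q)\rvert_p$ maximal and runs a pushing-up argument to force a Sylow $p$-subgroup of $G$ into $\N_G(Q)$, whence $\N_G(Q)=G$ and $Q\le\pcore_p(G)$; establishing $p$-constraint requires still more. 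This is the combinatorial core of Thompson's proof, not a routine reduction. Second, the endgame does not close: granting $W:=\Z(\J(P))\unlhd G$, you get $\N_G(W)=G$, but your hypothesis concerns $\N_G(\J(P))$, and the containment $\N_G(\J(P))\le\N_G(W)$ points the wrong way --- normality of $W$ gives no control over $\Aut_G(Q)$ for the other subgroups $Q\le P$ that \autoref{thmfrob}(2) demands, and Burnside only controls fusion in $\Z(P)$. (Note also that \autoref{ZJ} as stated in these notes is the $p$-nilpotency-forcing and fusion-control version; the internal statement ``$\Z(\J(P))\unlhd G$ for $p$-stable, $p$-constrained $G$ with $\pcore_{p'}(G)=1$'' that you actually need is a separate theorem you would have to import, with its own $\SL_2(p)$/$Qd(p)$ analysis justifying $p\ge 5$.) Both gaps are fixable, since this is a true theorem with a known proof of roughly this shape, but as written the proposal defers rather than supplies the two decisive steps.
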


\begin{Thm}[\textsc{Glauberman}'s $\ZJ$-theorem]\label{ZJ}
Let $p>2$.
Then $G$ is $p$-nilpotent if and only if\linebreak $\N_G\bigl(\Z(\J(P))\bigr)$ is $p$-nilpotent. 
If $G$ has no section isomorphic to $Qd(p):=C_p^2\rtimes\SL_2(p)$, then $\N_G\bigl(\Z(\J(P))\bigr)$ controls fusion in $P$.  
\end{Thm}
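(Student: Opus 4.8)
The plan is to prove Glauberman's $\ZJ$-theorem, whose statement involves two separate assertions about $\Z(\J(P))$: first a $p$-nilpotency criterion, and second a fusion-control statement under the extra $Qd(p)$-free hypothesis. Let me sketch how I'd approach each.

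**The statement to prove.** Let $p > 2$. Then $G$ is $p$-nilpotent if and only if $\N_G(\Z(\J(P)))$ is $p$-nilpotent; and if $G$ has no section isomorphic to $Qd(p) = C_p^2 \rtimes \SL_2(p)$, then $\N_G(\Z(\J(P)))$ controls fusion in $P$.

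The plan is to prove both assertions by reducing them to a single structural statement about $p$-constrained groups, which forms the technical heart of the matter. Throughout write $W := \Z(\J(P))$, a characteristic subgroup of $P$; in particular $P \le \N_G(W)$, so that $P \in \Syl_p(\N_G(W))$. One implication of the first claim is immediate: if $G$ is $p$-nilpotent then so is every subgroup, in particular $\N_G(W)$. Hence the substance lies in the converse and in the fusion statement.

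The central engine I would isolate is the following \emph{constrained case}, essentially due to Glauberman: if $p$ is odd, $\pcore_{p'}(H) = 1$, the group $H$ is $p$-constrained (that is, $\C_H(\pcore_p(H)) \le \pcore_p(H)$), and $H$ has no section isomorphic to $Qd(p)$, then $W = \Z(\J(P_0)) \unlhd H$ for $P_0 \in \Syl_p(H)$. To prove it I would set $V := \Omega_1(\Z(\pcore_p(H)))$ and study the action of $\overline{H} := H/\C_H(V)$ on $V$. The decisive notion is \emph{$p$-stability}: because $H$ is $Qd(p)$-free and $p$ is odd, every element of $\overline{H}$ acting \emph{quadratically} on $V$ (i.e. with $[V,\bar x,\bar x]=1$) lies in $\pcore_p(\overline{H})$ --- the obstruction to this being exactly an $\SL_2(p)$ acting naturally on $C_p^2$, i.e. a $Qd(p)$-section. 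Combined with Thompson's replacement theorem, which lets one replace an abelian subgroup of maximal order acting quadratically by another of maximal order lying inside $\C_P(V)$, $p$-stability forces $\J(P_0) \le \C_H(V)$, and a short argument then shows $\J(P_0) \unlhd H$, whence $W \unlhd H$. This step is where the real work lies and is the main obstacle.

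Granting the engine, the hard direction of the first claim follows by a minimal-counterexample argument. Let $G$ be of least order subject to being non-$p$-nilpotent while $\N_G(W)$ is $p$-nilpotent. One first reduces to $\pcore_{p'}(G) = 1$, using that $\J$ and $\Z(\J)$ are preserved under the isomorphism $P \cong P\pcore_{p'}(G)/\pcore_{p'}(G)$ and that $p$-nilpotency is invariant under the quotient $G \to G/\pcore_{p'}(G)$. Minimality then forces $G$ to be $p$-constrained, a reduction that is itself nontrivial. To run the engine one needs $Qd(p)$-freeness, and crucially this is automatic here even without a global hypothesis: a $Qd(p)$-section would make $\N_G(W)$ fail to be $p$-nilpotent --- one checks directly in $Qd(p)$ that the normalizer of the $\Z(\J)$ of a Sylow $p$-subgroup is the non-$p$-nilpotent group $C_p^2\rtimes(C_{p-1}\ltimes C_p)$ --- contradicting our assumption. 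Thus the engine yields $W \unlhd G$, so $\N_G(W) = G$ is $p$-nilpotent, contradicting the choice of $G$. (For $p \ge 5$ the $Qd(p)$-section never obstructs $p$-stability at all, which is why Thompson's theorem above requires no such hypothesis.)

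For the fusion statement, where no $p$-nilpotency assumption is available to exclude $Qd(p)$ for free, the hypothesis must be imposed outright. Here I would invoke Alperin's fusion theorem to reduce control of fusion in $P$ to the family of $p$-local normalizers $\N_G(Q)$ as $Q$ ranges over subgroups of $P$. For each such $Q$ the quotient $\overline{N} := \N_G(Q)/\pcore_{p'}(\N_G(Q))$ is $p$-constrained and inherits $Qd(p)$-freeness, so the engine applies and shows that the fusion contributed by $\N_G(Q)$ is already realized inside the normalizer of the relevant $\Z(\J(\cdot))$; patching these local contributions along $P$ shows that $\N_G(W)$ controls all fusion in $P$. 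Conceptually the upshot is that under the $Qd(p)$-free hypothesis $W$ becomes a normal, central subgroup of the fusion system $\FC = \FC_P(G)$, so that $\FC_P(G) = \N_{\FC_P(G)}(W) = \FC_P(\N_G(W))$.
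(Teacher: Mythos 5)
The paper states this theorem as a survey item without any proof, so there is nothing internal to compare against; I will assess your sketch on its own terms. Your overall architecture is the standard one and is correctly identified: isolate the constrained case ($\pcore_{p'}(H)=1$, $H$ $p$-constrained and $Qd(p)$-free imply $\Z(\J(P_0))\unlhd H$, proved via $p$-stability and Thompson replacement) and feed it into a minimal counterexample for the nilpotency criterion and a local-to-global argument for fusion control. However, the two bridges from the constrained case to the stated theorem both fail as you have written them, and they sit exactly at the delicate points of the proof.

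First, in the minimal counterexample to the nilpotency criterion you assert that $Qd(p)$-freeness is automatic because ``a $Qd(p)$-section would make $\N_G(W)$ fail to be $p$-nilpotent.'' This is a non-sequitur: a section $H/K$ of $G$ need not contain $P$, so minimality gives no information about $H$, and the Sylow $p$-subgroups of $H/K$ and their $\Z(\J(\cdot))$ bear no relation to $W=\Z(\J(P))$; your computation inside $Qd(p)$ itself says nothing about $G$. Establishing $p$-stability of the relevant action in the minimal counterexample is a genuine and nontrivial part of the argument --- notably for $p=3$, where even $p$-solvability does not yield $p$-stability because $\SL_2(3)$ is solvable, which is precisely why Glauberman--Thompson is delicate at $p=3$. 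Second, for the fusion statement you claim that $\N_G(Q)/\pcore_{p'}(\N_G(Q))$ is $p$-constrained for every $Q\le P$. This is false in general even for $Qd(p)$-free groups: $A_{12}$ is $Qd(5)$-free (its Sylow $5$-subgroup has order $5^2<5^3=|Qd(5)|_5$), yet $\N_{A_{12}}(\langle(1\,2\,3\,4\,5)\rangle)$ has trivial $5'$-core and involves the simple group $A_7$ centralizing its $\pcore_5$. Glauberman's actual reduction is not ``Alperin plus patching over $p$-locals'' but an induction over all \emph{sections} of $G$ via the formalism of section conjugacy functors, reducing control of strong fusion to the normality statement in the $p$-constrained sections; moreover $\Z(\J(P_Q))$ for $P_Q\in\Syl_p(\N_G(Q))$ is in general a different subgroup from $W$, so even where the engine applies one must solve a coherence problem that your sketch does not address. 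The skeleton is right, but both passages from the engine to the two assertions of the theorem are missing their actual content.
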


\begin{Ex}
For $p\ge 5$, every ($p$-)solvable group is $Qd(p)$-free.
\end{Ex}

\begin{Thm}[\textsc{Stellmacher}]\label{Stellmacher}
If $p=2$ and $G$ has no section isomorphic to $Qd(2)\cong S_4$, then $\N_G(W)$ controls fusion in $P$ for some characteristic subgroup $W$ of $P$. If $P\ne 1$, then $W\ne 1$.
\end{Thm}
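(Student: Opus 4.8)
The plan is to follow the pattern of Glauberman's $\ZJ$-theorem, but to replace $\Z(\J(P))$---which no longer controls fusion when $p=2$---by a more robust characteristic subgroup $W=W(P)$. By Alperin's fusion theorem, every instance of $G$-fusion in $P$ is a composite of conjugations by elements of normalizers $\N_G(Q)$, where $Q$ runs over the finitely many essential subgroups together with $P$. Hence it suffices to produce a nontrivial characteristic subgroup $W$ of $P$ that is normalized by $\N_G(Q)$ for every such $Q$: then $\N_G(Q)\le\N_G(W)$ throughout, every link of the Alperin chain already lies in $\N_G(W)$, and $\N_G(W)$ controls fusion. As $W$ is characteristic in $P$, the case $Q=P$ is free, so the problem localizes to the essential subgroups.

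\textbf{Local reduction.} Fix an essential $Q$, choose $R\in\Syl_2(\N_G(Q))$ with $Q\le R$, and pass to $\bar L:=\N_G(Q)/\pcore_{2'}(\N_G(Q))$. Because $Q$ is essential (hence radical and centric) one may reduce to the $2$-constrained case $\pcore_2(\bar L)=\bar Q$, where the behaviour of $\bar L$ is governed by its action on $V:=\Omega_1(\Z(Q))$, regarded as a module for $\bar L/\C_{\bar L}(V)$. Using the functorial behaviour of the construction to relate $W(R)$ and $W(P)$, the statement at $Q$ reduces to a Thompson factorization $\bar L=\N_{\bar L}(W(R))\,\C_{\bar L}(V)$, i.e. to the normality of $W(R)$ in $\bar L$.

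\textbf{Construction and obstruction.} Define $W$ by Stellmacher's iterative recipe, built from $\J(P)$ and $\Z(\J(P))$ and closed off under the additional characteristic subgroups forced by the module theory. The core of the argument is the classification of failure-of-factorization ($FF$-)modules: one shows that if $W(R)$ fails to be normal in some $\bar L$ as above, then $V$ affords a nontrivial $FF$-module for $\bar L/\C_{\bar L}(V)$ on which an offender acts quadratically, and that at $p=2$ the minimal such configuration forces a section isomorphic to $C_2^2\rtimes\SL_2(2)\cong S_4$. The $S_4$-free hypothesis thus rules out the obstruction, restores the factorization at every essential $Q$, and completes the reduction. Finally $W\ne1$ whenever $P\ne1$, since $\J(P)\ne1$ and the construction never collapses to the identity.

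\textbf{Main obstacle.} The genuinely difficult step is the last one. In contrast to the odd-prime case, $\Z(\J(P))$ does not control fusion at $p=2$, so one must first identify the correct characteristic subgroup $W$, and then carry out the $FF$-module analysis with enough precision to show that the \emph{only} surviving obstruction is the $S_4$-section rather than an open-ended list of bad modules. This module-theoretic and amalgam-method bookkeeping is exactly what makes Stellmacher's theorem substantially harder than Glauberman's.
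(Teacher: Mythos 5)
The paper offers no proof of this theorem; it is quoted from Stellmacher's paper \emph{A characteristic subgroup of $\Sigma_4$-free groups}, so your attempt can only be measured against the literature. Your road map is the correct one in outline --- reduce via \autoref{AFT} to the essential subgroups, seek a characteristic subgroup $W(P)$ whose local behaviour is controlled by a Thompson-type factorization, and identify the failure of that factorization with the existence of an $FF$-module whose minimal quadratic offender yields an $S_4$-section. But as a proof it has genuine gaps, and they are exactly where the content of the theorem lives. First, the bridge from the local factorization back to control of fusion is missing: even granting $\bar L=\N_{\bar L}(W(R))\,\C_{\bar L}(V)$ for $R\in\Syl_2(\N_G(Q))$, this concerns $W(R)$, not $W(P)$, and $\C_{\bar L}(V)$ need not normalize $W(P)$; the phrase ``using the functorial behaviour of the construction'' papers over the induction on $|G|$ (passing to the proper subgroups $\N_G(W(R))$, comparing Sylow subgroups, and re-entering the inductive hypothesis) that forms the backbone of both Glauberman's and Stellmacher's arguments. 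Second, the subgroup $W$ is never actually constructed: Stellmacher's $W(P)$, squeezed between $\Omega_1(\Z(P))$ and $\Omega_1(\Z(\J(P)))$, is defined by a genuinely new iterative procedure, and proving that it is characteristic, nontrivial for $P\ne 1$, and satisfies the factorization property in every $\Sigma_4$-free local subgroup is the theorem --- calling it ``Stellmacher's iterative recipe'' assumes the conclusion. Third, the assertion that the minimal $FF$-module obstruction at $p=2$ forces an $S_4$-section is stated, not argued; this is the Glauberman--Niles/Stellmacher quadratic-module analysis and cannot be taken as known without reproducing it.

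In short: you have correctly described the architecture of the known proof and correctly located the hard points, but every one of those hard points is deferred rather than carried out. To turn this into a proof you would need, at minimum, the explicit definition of $W$, the verification of the factorization $\bar L=\N_{\bar L}(W(R))\C_{\bar L}(V)$ under the $S_4$-free hypothesis (this is where the $FF$-module classification enters), and the Glauberman-style induction converting pointwise factorizations into the global statement that $\N_G(W)$ controls fusion in $P$.
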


Let $G'=[G,G]$ be the commutator subgroup and $\pcore^p(G)=\langle p'\text{-elements}\rangle$ the $p$-residue of $G$.

\begin{Thm}[(Hyper)focal subgroup theorem]
\begin{align*}
\foc_G(P)&:=\langle xy^{-1}:x,y\in P\text{ are conjugate in }G\rangle=G'\cap P\quad(\emph{focal subgroup}),\\
\hyp_G(P)&:=\langle xy^{-1}:x,y\in P\text{ are conjugate by a $p'$-element}\rangle=\pcore^p(G)\cap P\quad(\emph{hyperfocal subgroup}).
\end{align*}
\end{Thm}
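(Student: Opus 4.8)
The inclusion $\foc_G(P)\le G'\cap P$ is immediate: if $x,y\in P$ with $y={}^gx$, then $xy^{-1}=xgx^{-1}g^{-1}\in G'$ (and $\in P$), so every generator of $\foc_G(P)$ lies in $G'\cap P$. For the reverse inclusion I would use the transfer (Verlagerung). First I would check that $\foc_G(P)\trianglelefteq P$ and $P'\le\foc_G(P)$, so that $A:=P/\foc_G(P)$ is an abelian $p$-group and the transfer homomorphism $V\colon G\to A$ is defined. Using the cycle decomposition of the coset action one has $V(x)=\prod_i\overline{t_ix^{n_i}t_i^{-1}}$ (bars denoting images in $A$), with $\sum_i n_i=[G:P]$ and each $t_ix^{n_i}t_i^{-1}\in P$. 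Since $t_ix^{n_i}t_i^{-1}$ and $x^{n_i}$ are $G$-conjugate elements of $P$, they agree modulo $\foc_G(P)$, whence $V(x)\equiv x^{[G:P]}\pmod{\foc_G(P)}$ for all $x\in P$. As $p\nmid[G:P]$, raising to the $[G:P]$-th power is an automorphism of $A$, so $\Ker(V|_P)=\foc_G(P)$. Finally $A$ abelian forces $G'\le\Ker V$, hence $G'\cap P\le\Ker(V|_P)=\foc_G(P)$, giving equality.

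\textbf{Plan for the hyperfocal subgroup theorem.} The easy inclusion again comes for free: in the $p$-group $G/\pcore^p(G)$ a $p'$-element acts trivially by conjugation on the images of $P$, so $p'$-conjugate elements of $P$ coincide modulo $\pcore^p(G)$; thus $\hyp_G(P)\le\pcore^p(G)\cap P$. For the hard inclusion $\pcore^p(G)\cap P\le\hyp_G(P)$ (Puig's theorem) I would argue by induction on $|G|$. Put $D:=\pcore^p(G)$ and $Q:=P\cap D\in\Syl_p(D)$, and note $\hyp_D(Q)\le\hyp_G(P)$ because $p'$-fusion inside $D$ is $p'$-fusion in $G$. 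If $D<G$, then since $\pcore^p(D)=D$ the inductive hypothesis gives $\hyp_D(Q)=\pcore^p(D)\cap Q=Q=\pcore^p(G)\cap P$, and we are done. This reduces everything to the case $G=\pcore^p(G)$, where the assertion becomes the clean statement $\hyp_G(P)=P$.

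\textbf{The main obstacle.} Proving $\hyp_G(P)=P$ when $G=\pcore^p(G)$ is the real content, and I expect this to be the hardest step. Ordinary transfer is unavailable here because $P/\hyp_G(P)$ need not be abelian, which is exactly why the hyperfocal theorem lies deeper than the focal one. My plan is to localize the fusion via Alperin's fusion theorem, reducing the generation of $P$ to contributions of the local subgroups $\N_G(Q)$, $Q\le P$: every conjugation between subsets of $P$ is a composite of conjugations by elements of such normalizers. In $\N_G(Q)$ the relevant $p'$-fusion is governed by the $p'$-elements of $\Aut_G(Q)=\N_G(Q)/\C_G(Q)$, i.e. by $\pcore^p(\Aut_G(Q))$, which acts coprimely on the $p$-group $Q$. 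I would then invoke the coprime-action identity $Q=[Q,A]\,\C_Q(A)$ for a $p'$-group $A$ acting on $Q$ to produce enough commutators $[u,\alpha]=u\,\alpha(u)^{-1}$ (with $u\in Q$ and $\alpha$ a $p'$-automorphism) to exhaust $P$, the hypothesis $G=\pcore^p(G)$ guaranteeing that no $p$-quotient obstructs this. Verifying that these local $p'$-contributions really assemble to all of $P$ — equivalently, that passing from full fusion to $p'$-fusion loses nothing once $G=\pcore^p(G)$ — is the heart of the argument.
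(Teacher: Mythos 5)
The paper states this theorem without proof, so there is no in-text argument to compare against; judged on its own, your proposal is complete for the focal half and incomplete for the hyperfocal half. The focal argument is the standard correct one: $P'\le\foc_G(P)\trianglelefteq P$, the transfer evaluation lemma gives $V(x)\equiv x^{[G:P]}$ modulo $\foc_G(P)$ for $x\in P$, coprimality of $[G:P]$ to $p$ identifies $\Ker(V|_P)$ with $\foc_G(P)$, and $G'\le\Ker V$ finishes it. Your easy inclusion for $\hyp_G(P)$ and the reduction to the case $G=\pcore^p(G)$ (using $\pcore^p(\pcore^p(G))=\pcore^p(G)$ and $P\cap\pcore^p(G)\in\Syl_p(\pcore^p(G))$) are also correct.

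The gap is exactly where you flag it, and the plan as written will not close it. The coprime-action identity $Q=[Q,A]\,\C_Q(A)$ only places $[Q,A]$ into $\hyp_G(P)$ and says nothing about the uncontrolled factor $\C_Q(A)$, so iterating it over local subgroups does not obviously "exhaust $P$". The missing idea is the identity $\foc_G(P)=\hyp_G(P)\,P'$ (the group version of the statement quoted in \autoref{exfoc}). To prove it, take $G$-conjugate $x,y\in P$ and factor the conjugation via Alperin--Goldschmidt (\autoref{AFT}) through automorphism groups $\Aut_G(Q)$ of subgroups $Q$ that are $\FC_P(G)$-normalized, hence automized, so that $\Aut_P(Q)\in\Syl_p(\Aut_G(Q))$ and $\Aut_G(Q)=\pcore^p(\Aut_G(Q))\Aut_P(Q)$; writing each local conjugation as a product of a $\pcore^p$-part and a conjugation by an element of $\N_P(Q)$, and telescoping $xy^{-1}=(x_0x_1^{-1})\cdots(x_{n-1}x_n^{-1})$, puts every factor into $\hyp_G(P)P'$. (Two small verifications are needed: a $p'$-element of $\Aut_G(Q)$ is induced by a $p'$-element of $\N_G(Q)$ --- take the $p'$-part of any preimage --- and $w\phi(w)^{-1}\in\hyp_G(P)$ for $\phi$ a \emph{product} of such $p'$-automorphisms, by another telescoping; also $\hyp_G(P)\trianglelefteq P$ so that $\hyp_G(P)P'$ is a subgroup.) With this identity your clean case is immediate and needs no further fusion analysis: $G=\pcore^p(G)$ forces $\foc_G(P)=G'\cap P=P$ by the focal half you already proved, hence $P=\hyp_G(P)P'\le\hyp_G(P)\Phi(P)$, and the Frattini argument gives $P=\hyp_G(P)$.
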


The \emph{transfer map} yields $G/\pcore^p(G)\cong P/\hyp_G(P)$. 

\begin{Thm}[\textsc{Grün}'s theorem]
\[\foc_G(P)=[\N_G(P),P]\langle P\cap Q':Q\in\Syl_p(G)\rangle.\]
\end{Thm}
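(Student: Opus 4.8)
The plan is to prove the two inclusions separately. Write $N:=\N_G(P)$, $D:=\langle P\cap Q':Q\in\Syl_p(G)\rangle$, and $X:=[N,P]D$ for the right-hand side. First I would record two structural facts. Since $P\trianglelefteq N$ we have $P\in\Syl_p(N)$, and the focal subgroup theorem applied to $N$ gives $\foc_N(P)=N'\cap P$; because every $N$-conjugate of an element of $P$ stays in $P$, the defining generators $x\,({}^nx)^{-1}=[n,x]^{-1}$ show $\foc_N(P)=[N,P]$, whence $[N,P]=N'\cap P$ and in particular $[N,P]\trianglelefteq P$. As $D\trianglelefteq P$ as well (conjugation by $P$ permutes the subgroups $P\cap Q'$), the product $X$ is a normal subgroup of $P$ containing $P'$, so $P/X$ is an abelian $p$-group. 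The inclusion $X\subseteq\foc_G(P)$ is then immediate: $[N,P]$ is generated by commutators $[n,x]={}^nx\cdot x^{-1}$ lying in $P$, hence in $G'\cap P=\foc_G(P)$, and $P\cap Q'\subseteq P\cap G'=\foc_G(P)$ for every $Q\in\Syl_p(G)$.

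For the reverse inclusion I would argue by an index count rather than generator by generator. Since $G\to G/G'$ is surjective, $P$ maps onto the Sylow $p$-subgroup of $G/G'$, so $|P:\foc_G(P)|=|(G/G')_p|$ is the largest order of an abelian $p$-group onto which $G$ maps through $P$. Thus, once I exhibit a surjective homomorphism $G\twoheadrightarrow P/X$, the abelian $p$-group $P/X$ becomes a quotient of $(G/G')_p$, forcing $|P:X|\le|P:\foc_G(P)|$; together with $X\subseteq\foc_G(P)$ this yields $X=\foc_G(P)$, as desired.

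The surjection I would produce is the transfer $V\colon G\to P/P'\to P/X$, which is well defined because $P/X$ is abelian. Evaluating $V$ on $a\in P$ by Burnside's double-coset formula gives $V(a)\equiv\prod_i{}^{y_i}(a^{m_i})\pmod X$, where the $y_i$ run over representatives of $P\backslash G/\langle a\rangle$, the orbit lengths satisfy $\sum_i m_i=[G:P]$, and each ${}^{y_i}(a^{m_i})\in P$ with $a^{m_i}\in P\cap{}^{y_i^{-1}}P$. The goal is to show $V(a)\equiv a^{[G:P]}\pmod X$; since $p\nmid[G:P]$ the $[G:P]$-power map is bijective on $P/X$, and this identity makes $V|_P$, hence $V$, surjective. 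For a double coset with representative $y_i\in N$ one has $m_i=1$ and the correction ${}^{y_i}a\cdot a^{-1}=[y_i,a]\in[N,P]\subseteq X$. The remaining representatives $y_i\notin N$ give powers $a^{m_i}$ sitting in a proper Sylow intersection $P\cap{}^{y_i^{-1}}P$, and their corrections must be absorbed into $D=\langle P\cap Q'\rangle$.

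The main obstacle is exactly this last point: showing that the corrections coming from double cosets outside $N$ aggregate into $\langle P\cap Q'\rangle$. Individually a correction ${}^{y_i}(a^{m_i})\cdot a^{-m_i}$ need not lie in $X$, so I expect to argue globally rather than term by term, for instance by invoking transitivity of the transfer through $N$ (writing $V_{G\to P}=V_{N\to P}\circ V_{G\to N}$) to separate the contribution controlled by $N$, which lands in $[N,P]=N'\cap P$, from the contribution of the ``outer'' cosets, whose discrepancy is governed by the derived subgroups of the conjugate Sylow subgroups and hence by the groups $P\cap Q'$. Pinning down this second contribution is the technical heart of the theorem, and the precise shape of the right-hand side is engineered to capture exactly these two sources of fusion.
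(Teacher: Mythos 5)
Your framework is sound as far as it goes: the easy inclusion $X\subseteq\foc_G(P)$, the identification $[\N_G(P),P]=P\cap\N_G(P)'$ via the focal subgroup theorem applied to $\N_G(P)$, and the index-counting reduction of the hard inclusion to the single congruence $V(a)\equiv a^{[G:P]}\pmod{X}$ for the transfer $V\colon G\to P/X$ are all correct. But the argument stops exactly where Grün's theorem begins. Everything the statement adds to the focal subgroup theorem is the claim that the contributions of the double cosets whose representatives lie outside $\N_G(P)$ fall into $X$, and you explicitly leave this open. Note moreover that the term-by-term congruence ${}^{y_i}(a^{m_i})\equiv a^{m_i}\pmod{X}$ which your setup invites is, modulo the focal subgroup theorem, \emph{equivalent} to the hard direction of the theorem: each correction ${}^{y_i}(a^{m_i})\,a^{-m_i}$ is a typical generator of $\foc_G(P)$, so any attempt to place such an element in $X$ individually is circular. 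This confirms your instinct that one must argue globally, but it also means the global argument is not a bookkeeping refinement --- it is the entire content. The classical proofs (Grün's original argument, as presented in Hall or Huppert) regroup the transfer over $P$-$P$ double cosets $PxP$ and prove a dedicated lemma that the \emph{total} contribution of each such double coset to $V(a)$ is congruent to $a^{[P:P\cap{}^{x}P]}$ modulo $(P\cap\N_G(P)')\langle P\cap({}^{x}P)'\rangle$; this is where the generators $P\cap Q'$ genuinely enter, and it is precisely the lemma your outline postpones.

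Your proposed repair via transitivity of transfer, $V_{G\to P}=V_{N\to P}\circ V_{G\to N}$, does not close the gap by itself: evaluating $V_{G\to N}$ on $a\in P$ again forces coset representatives outside $N$, and one must still explain why the resulting discrepancy, pushed down to $P/X$, lies in $\langle P\cap Q':Q\in\Syl_p(G)\rangle$ rather than merely in $P\cap G'$. So what is missing is a genuine additional idea, not a technical verification. (For what it is worth, the lecture notes state Grün's theorem without proof, so there is no argument in the paper to measure yours against; judged on its own, the proposal is an accurate map of the standard route with its central span unbuilt.)
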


Let $\Phi(P)$ be the Frattini subgroup of $P$.

\begin{Thm}\label{Tatepnil}
The following assertions are equivalent:
\begin{enumerate}[(1)]
\item $G$ is $p$-nilpotent.
\item $\hyp_G(P)=1$.
\item $\hyp_G(P)\le\Phi(P)$.
\end{enumerate}
\end{Thm}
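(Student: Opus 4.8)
The three assertions will be linked by proving $(1)\Leftrightarrow(2)$, the trivial implication $(2)\Rightarrow(3)$, and the substantial implication $(3)\Rightarrow(2)$. For $(1)\Leftrightarrow(2)$ the plan is to reformulate $p$-nilpotency through $N:=\pcore^p(G)$. Since $N$ is generated by $p'$-elements, $G/N$ is a $p$-group, and $N$ is the smallest normal subgroup with $p$-group quotient. Hence $G$ is $p$-nilpotent exactly when $N$ is a $p'$-group: if $N$ is a $p'$-group it is a normal $p$-complement, and conversely a normal $p$-complement $K$ is a $p'$-group (so $K\le N$) with $G/K$ a $p$-group (so $N\le K$), forcing $N=K$. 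Because $N\unlhd G$ we have $N\cap P\in\Syl_p(N)$, so $N$ is a $p'$-group iff $N\cap P=1$; by the hyperfocal subgroup theorem $N\cap P=\hyp_G(P)$, which yields $(1)\Leftrightarrow(2)$. The implication $(2)\Rightarrow(3)$ is immediate since $1\le\Phi(P)$.

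For the essential direction $(3)\Rightarrow(2)$ I would set $S:=\hyp_G(P)=N\cap P$, which is normal in $P$ as $N\unlhd G$, and show that $S\le\Phi(P)$ forces $S=1$, arguing by induction on $|G|$. By the Frattini argument $G=N\,\N_G(S)$, so $\N_G(S)$ contains a Sylow $p$-subgroup of $G$ and I may assume $P\le\N_G(S)$, i.e.\ $S\unlhd P$. A useful monotonicity is that $\hyp_H(P)\le\hyp_G(P)$ for every intermediate $P\le H\le G$, since conjugacy by a $p'$-element of $H$ is in particular conjugacy by a $p'$-element of $G$; thus $\hyp_H(P)\le S\le\Phi(P)$, and induction applies to any proper such $H$. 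This lets me push toward the case $\N_G(S)=G$, i.e.\ $S\unlhd G$. There $N/S$ is a normal $p'$-subgroup (as $S\in\Syl_p(N)$ and $S\unlhd N$) with $p$-group quotient $G/N$, so $G/S$ is already $p$-nilpotent, and what remains is to lift $p$-nilpotency from $G/S$ back to $G$ across the normal $p$-subgroup $S\le\Phi(P)$.

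This last lifting step is the main obstacle: it is exactly the content of Tate's $p$-nilpotency criterion and does not fall out of the transfer bookkeeping above. I expect to prove it cohomologically rather than by hand. Indeed, by the transfer isomorphism $G/N\cong P/S$ with $S\le\Phi(P)$, the maximal elementary abelian $p$-quotients of $G/N$ and of $P$ coincide, so $\cohom^1(G;\FF_p)=\Hom(G,\FF_p)$ and $\cohom^1(P;\FF_p)$ have the same $\FF_p$-dimension $d(P)=\dim_{\FF_p}P/\Phi(P)$; as the restriction $\cohom^1(G;\FF_p)\to\cohom^1(P;\FF_p)$ is injective (because $\mathrm{cor}\circ\mathrm{res}$ is multiplication by the $p'$-number $[G:P]$), it is an isomorphism. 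Tate's theorem then says that such a degree-one isomorphism propagates to all degrees and forces a normal $p$-complement, giving $S=1$; an alternative is a coprime-action analysis of the faithful piece of the action on $S/\Phi(S)$. I anticipate this cohomological input to be the one genuinely hard ingredient, with everything else being routine manipulation of the hyperfocal subgroup theorem and Sylow theory.
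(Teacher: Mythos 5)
The paper states this theorem without proof (it is one of the classical results being surveyed), so there is no in-text argument to compare against; I can only assess your proposal on its own terms. Your handling of $(1)\Leftrightarrow(2)$ via the identity $\hyp_G(P)=\pcore^p(G)\cap P$ and of $(2)\Rightarrow(3)$ is correct and complete, and so is your verification that hypothesis (3) forces the restriction $\cohom^1(G;\FF_p)\to\cohom^1(P;\FF_p)$ to be an isomorphism: the dimensions agree because $\Hom(G,\FF_p)=\Hom(G/\pcore^p(G),\FF_p)\cong\Hom(P/\hyp_G(P),\FF_p)=\Hom(P/\Phi(P),\FF_p)$ using the transfer isomorphism and $\hyp_G(P)\le\Phi(P)$, and injectivity follows from $\mathrm{cor}\circ\mathrm{res}=[G:P]$. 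But recognize what you have actually done at that point: you have reduced the statement to Tate's cohomological $p$-nilpotency criterion (a degree-one restriction isomorphism forces a normal $p$-complement), which is this very theorem in a different guise. Nothing in your write-up proves that implication, and that is where the entire difficulty lives. Citing it as a known prior theorem is defensible; presenting the whole as a proof of Tate's criterion is close to circular, and you should say explicitly which formulation you are taking as given.

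Separately, the inductive paragraph you insert before the cohomological argument contains an unjustified step and should be cut. First, $S=\pcore^p(G)\cap P$ is automatically normal in $P$ because $\pcore^p(G)\unlhd G$, so no Frattini argument is needed to arrange $P\le\N_G(S)$. Second, the claim that induction "lets you push toward the case $\N_G(S)=G$" does not work: knowing $\hyp_{\N_G(S)}(P)=1$, i.e.\ that $\N_G(S)$ is $p$-nilpotent, says nothing about $S$ itself and does not transfer $p$-nilpotency to $G$ without a control-of-transfer statement that you have not supplied. Third, in the case $S\unlhd G$ the observation that $G/S$ is $p$-nilpotent is vacuous, since $\pcore^p(G)/S$ is always a $p'$-group ($S$ being a normal Sylow $p$-subgroup of $\pcore^p(G)$), so this reduction buys nothing and the "lifting" step it leaves open is the full content of the theorem. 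Your final argument does not rely on any of this, so the proof is not invalidated, but the paragraph should not be presented as progress toward the result.
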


\begin{Thm}[\textsc{Tate}'s transfer theorem]
For $P\le H\le G$ we have
\[\foc_G(P)=\foc_H(P)\iff\hyp_G(P)=\hyp_H(P)\iff\foc_G(P)\Phi(P)=\foc_H(P)\Phi(P).\]
\end{Thm}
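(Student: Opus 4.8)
The plan is to route everything through the focal and hyperfocal subgroup theorems together with the single auxiliary identity
\[\foc_K(P)\Phi(P)=\hyp_K(P)\Phi(P)\qquad(P\le K),\]
which I will call $(\star)$. Once $(\star)$ is available the three conditions nearly collapse into one: condition (iii) is literally $\foc_G(P)\Phi(P)=\foc_H(P)\Phi(P)$, and $(\star)$ rewrites it as $\hyp_G(P)\Phi(P)=\hyp_H(P)\Phi(P)$. Since conjugacy in $H$ is conjugacy in $G$ we always have $\foc_H(P)\le\foc_G(P)$ and $\hyp_H(P)\le\hyp_G(P)$, so the implications (i)$\Rightarrow$(iii) and (ii)$\Rightarrow$(iii) are immediate. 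What remains are the two \emph{descent} implications (iii)$\Rightarrow$(i) and (iii)$\Rightarrow$(ii): from equality modulo $\Phi(P)$ of a nested pair of focal (respectively hyperfocal) subgroups one must deduce equality on the nose. I would moreover first note that both descent statements are transitive along a chain $P\le H\le M\le G$: if the relevant subgroups for $(H,G)$ agree modulo $\Phi(P)$, then the subgroup for the intermediate $M$ is squeezed between two coinciding terms and hence also agrees modulo $\Phi(P)$; chaining equalities back up, it therefore suffices to treat the case where $H$ is a \emph{maximal} subgroup of $G$.

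For $(\star)$ the inclusion $\supseteq$ is clear, since a pair conjugate by a $p'$-element is conjugate. For $\subseteq$, let $W$ be the largest elementary abelian $p$-quotient of $K$, that is, the Frattini quotient of $K/\pcore^p(K)\cong P/\hyp_K(P)$. The composite $P\hookrightarrow K\twoheadrightarrow W$ is surjective, because $P$ is a Sylow $p$-subgroup and $W$ is a $p$-group. Since $P\cap\pcore^p(K)=\hyp_K(P)$ by the hyperfocal subgroup theorem, and the Frattini subgroup of $K/\pcore^p(K)$ pulls back along $P\twoheadrightarrow P/\hyp_K(P)$ to $\hyp_K(P)\Phi(P)$, the kernel of $P\to W$ is exactly $\hyp_K(P)\Phi(P)$. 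On the other hand $\foc_K(P)=K'\cap P\le K'$ maps to the identity in the abelian group $W$, so $\foc_K(P)\le\hyp_K(P)\Phi(P)$, which is $(\star)$.

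The heart of the matter is the descent step, whose decisive special case is $H=P$: there (iii) reads $\hyp_G(P)\le\Phi(P)$ and the desired conclusion $\hyp_G(P)=1$ is precisely Theorem~\ref{Tatepnil}, i.e.\ Tate's $p$-nilpotency criterion. I would try to reduce the general (maximal) case to this one. After quotienting out $\pcore_{p'}(G)$, which changes neither the fusion of $P$ in $G$ nor that in $H$ and hence preserves all the data, one may assume $\pcore_{p'}(G)=1$; one is then left with a maximal subgroup $H$ of index prime to $p$, and the only thing that can enlarge $\hyp_G(P)$ beyond $\hyp_H(P)$ is $p'$-fusion coming from $G\setminus H$. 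The tool one \emph{wants} to apply is the standard coprime-action lemma: a $p'$-group acting on a $p$-group trivially on the Frattini quotient acts trivially; here (iii) says exactly that the relevant coprime fusion is trivial modulo $\Phi(P)$.

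The step I expect to be the main obstacle is precisely this last one, and it is genuinely where Tate's theorem has content. The naive hope—that a nested pair of subgroups of $P$ agreeing modulo $\Phi(P)$ must coincide—is false for arbitrary subgroups, since any subgroup contained in $\Phi(P)$ already agrees with $1$ modulo $\Phi(P)$; so the argument must use the special nature of focal and hyperfocal subgroups. Concretely, the obstruction is that $\foc_H(P)$ and $\hyp_H(P)$ are invariant only under $H$-fusion, not under the larger $G$-fusion, so one cannot simply form a $G$-equivariant quotient of $P$ on which to run the coprime-action lemma. Overcoming this requires either Tate's original cohomological input (the restriction $\cohom^1(G,\FF_p)\to\cohom^1(H,\FF_p)$ being an isomorphism forces restriction to be an isomorphism in all degrees) or a careful minimal-counterexample analysis that manufactures a genuine normal subgroup of $G$ to quotient by and then invokes Theorem~\ref{Tatepnil}; either way this is the crux, whereas $(\star)$, the trivial implications, and the reduction to maximal $H$ are routine.
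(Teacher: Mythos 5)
The paper states Tate's theorem without proof, so there is nothing on that side to compare against; judged on its own, your proposal has a genuine gap, namely the part you yourself call ``the crux.'' What you do establish is correct and well organized: the identity $(\star)$, $\foc_K(P)\Phi(P)=\hyp_K(P)\Phi(P)$, is proved cleanly (the composite $P\to K/\pcore^p(K)\to W$ is surjective with kernel $\hyp_K(P)\Phi(P)$ since $\Phi(P/N)=\Phi(P)N/N$ for $p$-groups, and $K'$ dies in the elementary abelian quotient $W$); the containments $\foc_H(P)\le\foc_G(P)$ and $\hyp_H(P)\le\hyp_G(P)$ give the upward implications; the dévissage to the case of a maximal subgroup $H<G$ is sound; and you correctly note that for $H=P$ the remaining implication is exactly \autoref{Tatepnil}.

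But the descent implications (iii)$\Rightarrow$(i) and (iii)$\Rightarrow$(ii) are the entire content of the theorem, and your text explicitly stops short of proving them: you name two possible strategies (Tate's cohomological argument that an isomorphism on $\cohom^1(-,\FF_p)$ under restriction propagates to all degrees, or a minimal-counterexample analysis) without executing either. The coprime-action heuristic you sketch for maximal $H$ cannot be completed as stated, for the reason you yourself identify: $\hyp_H(P)$ is invariant only under $H$-fusion, so there is no $G$-stable quotient of $P$ on which to apply the ``trivial on the Frattini quotient implies trivial'' lemma, and passing to $G/\pcore_{p'}(G)$ does not manufacture one (note also that $H$ need not contain $\pcore_{p'}(G)$, so even that reduction needs justification). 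A complete proof must either import Tate's cohomological theorem or run the transfer/minimal-counterexample argument (Thompson's, or the character-theoretic one of Gagola--Isaacs); as written, the proposal is an accurate road map with the destination missing.
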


If $\foc_G(P)=\foc_H(P)$, we say that $H$ \emph{controls transfer} in $P$. In this case $H$ determines whether $G$ is $p$-nilpotent by \autoref{Tatepnil}. 

\begin{Thm}[\textsc{Yoshida}'s transfer theorem]
If $P$ has no quotient isomorphic to $C_p\wr C_p$, then $\N_G(P)$ controls transfer in $P$.
\end{Thm}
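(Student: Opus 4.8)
The plan is to realize both focal subgroups as kernels of transfer homomorphisms and compare them. Set $N:=\N_G(P)$ and let $V_G\colon G\to P/P'$ and $V_N\colon N\to P/P'$ be the transfer maps. The focal subgroup theorem identifies $\foc_G(P)=G'\cap P$ with $\{u\in P:V_G(u)=1\}$, and similarly $\foc_N(P)=N'\cap P$ with $\{u\in P:V_N(u)=1\}$. Conjugacy in $N$ is a special case of conjugacy in $G$, so $\foc_N(P)\subseteq\foc_G(P)$ holds automatically and the whole point is the reverse inclusion. By Tate's transfer theorem it suffices to establish it modulo $\Phi(P)$; since $P/\Phi(P)$ is elementary abelian, $V_G$ and $V_N$ induce endomorphisms $\overline{V}_G,\overline{V}_N$ of $P/\Phi(P)$, and I must show they have the same kernel. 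The computation is cleanest, and already exposes the relevant obstruction, for elements $u\in P$ of order $p$.

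For such $u$ I would invoke the classical orbit evaluation of the transfer on the coset space $\Omega:=G/P$: one has $V_G(u)\equiv\prod_{\mathcal O}{}^{t_{\mathcal O}^{-1}}\!\bigl(u^{|\mathcal O|}\bigr)\pmod{P'}$, where $\mathcal O$ ranges over the $\langle u\rangle$-orbits on $\Omega$ and the $t_{\mathcal O}$ are chosen with ${}^{t_{\mathcal O}^{-1}}(u^{|\mathcal O|})\in P$. Because $u^p=1$, every orbit of length $p$ contributes $u^p=1$, so only the fixed cosets remain and $V_G(u)\equiv\prod_{gP\in\Fix(u)}{}^{g^{-1}}u\pmod{P'}$, the product running over all $gP$ with ${}^{g^{-1}}u\in P$. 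Running the same computation inside $N$, the element $u$ fixes every coset in $N/P$ since $P\unlhd N$, so $V_N(u)\equiv\prod_{nP\in N/P}{}^{n^{-1}}u$. As $N/P\subseteq\Fix(u)$, the two transfers differ precisely by the correction $C(u):=\prod_{gP\in\Fix(u)\setminus(N/P)}{}^{g^{-1}}u$ coming from the extra fixed cosets, namely those $gP$ with $g\notin N$ yet ${}^{g^{-1}}u\in P$; each such coset encodes a $G$-fusion of $u$ back into $P$ that is invisible to the normalizer.

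It remains to show that $C(u)\equiv1\pmod{\Phi(P)}$, and this is where both the hypothesis and the main difficulty enter. The idea is to organize the extra fixed cosets under a suitable $p$-subgroup and to argue that any non-trivial value of $C(u)$ forces a configuration in which an element of order $p$ permutes $p$ mutually independent conjugates of $\langle u\rangle$ transitively; the $p$ permuted copies together with the permuting element then build a copy of $C_p\wr C_p$, first as a section of $G$ and, after descending to $P/\Phi(P)$ by Tate's theorem, as a quotient of $P$. Contrapositively, if $P$ admits no quotient isomorphic to $C_p\wr C_p$, then $C(u)\equiv1\pmod{\Phi(P)}$ throughout, the induced maps $\overline{V}_G$ and $\overline{V}_N$ coincide, their kernels agree, and hence $\foc_G(P)=\foc_N(P)$, i.e. $\N_G(P)$ controls transfer in $P$. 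The hard part is exactly this last step—turning the non-vanishing of the transfer correction into a genuine wreath-product quotient of $P$ (not merely a subquotient of $G$), and likewise disposing of elements of order larger than $p$, whose longer orbits contribute additional terms. Both form the technical core of Yoshida's argument and are handled most transparently through the monomial/permutation-module description of the transfer.
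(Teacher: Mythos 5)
The paper states Yoshida's theorem without proof (these are survey lecture notes), so there is no internal argument to compare against; judged on its own, your proposal is an outline rather than a proof, and the gap it leaves open is precisely the content of the theorem. The setup is fine: the identification $\foc_G(P)=P\cap G'=P\cap\Ker(V_G)$, the automatic inclusion $\foc_{\N_G(P)}(P)\subseteq\foc_G(P)$, the reduction modulo $\Phi(P)$ via Tate, and the orbit evaluation of the transfer isolating the correction term $C(u)$ supported on the fixed cosets outside $\N_G(P)/P$ are all standard and correct. But the step you defer --- ``any non-trivial value of $C(u)$ forces \dots a copy of $C_p\wr C_p$ \dots as a quotient of $P$'' --- is not a technical verification one can wave at; it \emph{is} Yoshida's theorem. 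The heuristic you offer for it is moreover not coherent as stated: a configuration of $p$ conjugates of $\langle u\rangle$ permuted by a $p$-element lives a priori as a subgroup (or section) of $G$, and no amount of ``descending to $P/\Phi(P)$ by Tate's theorem'' can produce the nonabelian group $C_p\wr C_p$ of order $p^{p+1}$ as a quotient of the elementary abelian group $P/\Phi(P)$. The passage from a wreath-product section of $G$ to a wreath-product \emph{quotient of $P$} is exactly where Yoshida's character-theoretic (monomial representation / Burnside ring) machinery, or the modern biset-theoretic reformulation, is indispensable, and nothing in your sketch substitutes for it.

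There are two further unaddressed points. First, restricting to elements $u$ of order $p$ does not suffice even modulo $\Phi(P)$: the induced map $\overline{V}_G$ on $P/\Phi(P)$ is evaluated by computing $V_G(u)$ for an actual representative $u$, and a coset of $\Phi(P)$ need not contain any element of order $p$; for representatives of larger order the non-fixed orbits contribute terms ${}^{t^{-1}}(u^{p^k})$ that are not obviously trivial modulo $\Phi(P)P'$. Second, comparing kernels of $\overline{V}_G$ and $\overline{V}_N$ is weaker than what you need unless you show the maps themselves agree (which is what your $C(u)\equiv 1$ claim would give); two distinct endomorphisms of $P/\Phi(P)$ can have equal kernels or not, and the focal subgroups are the preimages in $P$ of these kernels, so the logic should be run through $\foc_G(P)\Phi(P)=\foc_N(P)\Phi(P)$ directly. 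In short: correct scaffolding, but the theorem itself is left unproved.
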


\begin{Ex}\hfill
\begin{enumerate}[(i)]
\item If $|P|\le p^p$ or $\exp(P)=p$ (exponent) or $c(P)<p$ (nilpotency class), then $\N_G(P)$ controls transfer in $P$. This follows from the properties of $C_p\wr C_p$.

\item Let $p=2$ and $G=S_4$. Then $\N_G(P)=P\cong D_8\cong C_2\wr C_2$ does not control transfer in $P$ since otherwise $G$ would be $2$-nilpotent. For $p>2$ and
\[G=\FF_p^p\rtimes\left\langle \begin{pmatrix}
1&&&0\\
&-1\\
&&\ddots\\
0&&&-1
\end{pmatrix},\begin{pmatrix}
0&1&&0\\
&\ddots&\ddots\\
&&\ddots&1\\
1&&&0
\end{pmatrix}\right\rangle\le\ASL(p,p),\]
again $\N_G(P)=P\cong C_p\wr C_p$ does not control transfer in $P$.  
\end{enumerate}
\end{Ex}

\begin{Thm}[\textsc{Glauberman}]\label{glauberman}
If $p\ge 5$, then there exists a characteristic subgroup $K$ of $P$ such that $\N_G(K)$ controls transfer in $P$ and $\Z(P)\le K$.
\end{Thm}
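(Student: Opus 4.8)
The plan is to rephrase control of transfer via Tate's theorem as an equality of hyperfocal subgroups, localize the remaining content to the commutator subgroups of Sylow $p$-subgroups by Grün's theorem, and then build $K$ from a Glauberman-type functor whose normalizer absorbs the relevant $p'$-fusion once $p\ge5$ makes the ambient group $p$-stable.

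By Tate's transfer theorem, ``$\N_G(K)$ controls transfer in $P$'' is equivalent to $\hyp_G(P)=\hyp_{\N_G(K)}(P)$ and to $\foc_G(P)\Phi(P)=\foc_{\N_G(K)}(P)\Phi(P)$; the last form shows that only the image in the $\FF_p$-space $P/\Phi(P)$ matters. Grün's theorem gives $\foc_G(P)=[\N_G(P),P]\langle P\cap Q':Q\in\Syl_p(G)\rangle$, and since every characteristic $K\le P$ satisfies $\N_G(P)\le\N_G(K)$, the factor $[\N_G(P),P]$ is automatically realized inside $\N_G(K)$. Hence the whole task reduces to capturing the Sylow-commutator part $\langle P\cap Q':Q\in\Syl_p(G)\rangle$ modulo $\Phi(P)$ inside $\N_G(K)$. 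Moreover, if $P$ has no quotient isomorphic to $C_p\wr C_p$, then Yoshida's theorem already lets me take $K=P$, so I may assume such a quotient exists and that $\N_G(P)$ itself fails.

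For the construction I would take $K$ to be one of Glauberman's functorial characteristic subgroups, assembled from Thompson-type abelian subgroups and their centralizers. The containment $\Z(P)\le K$ is the easy part: every abelian subgroup of maximal order contains $\Z(P)$, whence $\Z(P)\le\Z(\J(P))$, and Glauberman's functors are designed to contain this center. The substance is to verify $\hyp_G(P)=\hyp_{\N_G(K)}(P)$. Here I would pass to $\bar G:=G/\pcore_{p'}(G)$, which changes neither side since $\hyp$ is generated by $p'$-conjugations and is insensitive to $\pcore_{p'}(G)$, and then analyze, for each relevant $p$-local subgroup, its action on the abelian $p$-sections of $P$. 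The key input is Glauberman's replacement and $p$-stability machinery: when $p\ge5$ the action of such a local subgroup on these sections cannot realize the quadratic $\SL_2(p)$-configuration, so one obtains a factorization pushing every relation $x\mapsto{}^gx$ with $g$ a $p'$-element into $\N_G(K)$ modulo $\Phi(P)$; together with the previous paragraph this yields the desired equality, and Tate's theorem concludes.

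The hard part will be establishing this factorization outside the $p$-solvable case, where $\F^*(\bar G)$ need not be a $p$-group and one cannot simply invoke $p$-constraint; controlling the components and their contribution to $P/\Phi(P)$ is exactly where Glauberman's quadratic-action analysis, and hence the hypothesis $p\ge5$, becomes indispensable. That the bound is genuine rather than an artifact is clear already from $G=S_4$ at $p=2$, where $P\cong D_8$: every characteristic subgroup $K$ of $P$ with $\Z(P)\le K$ satisfies $\N_G(K)=\N_G(P)=P$, and $P$ does not control transfer (else $S_4$ would be $2$-nilpotent), so no admissible $K$ can work there; the prime $p=3$ is excluded for analogous reasons.
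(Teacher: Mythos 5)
The paper states this theorem without proof --- it is a deep result of Glauberman whose argument occupies a substantial part of his work on section-conjugacy functors --- so there is no in-paper proof to compare against; the question is whether your proposal stands on its own, and it does not. Your opening reductions are sound: Tate's theorem converts control of transfer into $\hyp_G(P)=\hyp_{\N_G(K)}(P)$, Grün's theorem absorbs the factor $[\N_G(P),P]$ into any $\N_G(K)$ with $K$ characteristic in $P$, and Yoshida disposes of the case where $P$ has no quotient $C_p\wr C_p$. But at the point where the actual content begins --- defining $K$ and proving that $\N_G(K)$ captures $\langle P\cap Q':Q\in\Syl_p(G)\rangle$ modulo $\Phi(P)$ --- you write only that you ``would take $K$ to be one of Glauberman's functorial characteristic subgroups'' and invoke ``Glauberman's replacement and $p$-stability machinery'' to produce a factorization. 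No subgroup is constructed, no factorization theorem is stated or proved, and the paragraph you yourself label ``the hard part'' (handling components when $\F^*(G/\pcore_{p'}(G))$ is not a $p$-group) is left entirely open. This is circular in effect: the existence and properties of such a functor \emph{are} the theorem. Note also that the remark after the theorem says that for $p\ge 7$ one may take $K=\J(P)$ but that $p=5$ requires a different subgroup, so any argument must actually commit to a construction rather than leave it generic.

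One factual error: you assert that ``the prime $p=3$ is excluded for analogous reasons.'' The paper states explicitly that whether the theorem holds for $p=3$ is an open problem; no counterexample analogous to $S_4$ or $\PSL(2,17)$ is known there, and the hypothesis $p\ge 5$ enters through quadratic-action/$p$-stability arguments that fail for $Qd(p)$-configurations, not through a known $3$-local counterexample. Your $S_4$ computation for $p=2$ is correct and is a legitimate (simpler) alternative to the paper's $\PSL(2,17)$ example, but it does not transfer to $p=3$.
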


The simple group $\PSL(2,17)$ shows that \autoref{glauberman} fails for $p=2$ (here $P$ is a maximal subgroup). 
It is an open problem whether \autoref{glauberman} holds for $p=3$. 
For $p\ge 7$ one can take $K=\J(P)$. 

\section{Fusion systems}

For arbitrary groups $S,T\le P$ let $\Hom_P(S,T)$ be the set of homomorphisms $S\to T$ induced by inner automorphisms of $P$, i.\,e. 
\[\Hom_P(S,T):=\bigl\{\phi\colon S\to T:\exists g\in P:\phi(s)={}^gs\ \forall s\in S\bigr\}.\]

\begin{Def}[\textsc{Puig}\protect\footnote{Puig calls them \emph{Frobenius categories}}]
A \emph{fusion system} on a finite $p$-group $P$ is a category $\FC$ with objects $\Obj(\FC)=\{S:S\le P\}$ and morphisms $\Hom_\FC(S,T)\subseteq\{S\to T:\text{injective group homomorphism}\}$ such that 
\begin{itemize}
\item $\Hom_P(S,T)\subseteq\Hom_{\FC}(S,T)$ for $S,T\le P$,
\item $\phi\in\Hom_{\FC}(S,T)\ \Longrightarrow\ \phi\in\Hom_{\FC}(S,\phi(S))$, $\phi^{-1}\in\Hom_\FC(\phi(S),S)$.
\end{itemize}
\end{Def}

\begin{Ex}\hfill
\begin{enumerate}[(i)]
\item Let $P$ be a $p$-subgroup of a finite group $G$. Then $\Hom_\FC(S,T):=\Hom_G(S,T)$ for $S,T\le P$ defines a fusion system on $P$, which we denote by $\FC_P(G)$. 
In particular, there is always the \emph{trivial} fusion system $\FC_P(P)$, which is a subcategory of every fusion system on $P$.

\item The \emph{universal} fusion system $\FC:=\UC(P)$ on $P$ is defined by 
\[\Hom_\FC(S,T):=\{S\to T\text{ injective homomorphism}\}.\] 
Every fusion system on $P$ is a subcategory of $\UC(P)$.  
\end{enumerate}
\end{Ex}

\begin{Thm}[\textsc{Park}]\label{park}
For every fusion system $\FC$ on $P$ there exists a finite group $G$ containing $P$ such that $\FC=\FC_P(G)$.
\end{Thm}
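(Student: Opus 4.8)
The plan is to produce a finite overgroup $G\ge P$ whose conjugation-induced maps among subgroups of $P$ are exactly the morphisms of $\FC$. First I would reduce to isomorphisms: in any fusion system a morphism $\phi\colon S\to T$ factors as the isomorphism $S\to\phi(S)$ (which lies in the system by the second axiom) followed by the inclusion $\phi(S)\hookrightarrow T$ (which is $\Hom_P$, hence present in every fusion system). Thus both $\FC$ and $\FC_P(G)$ are determined by their isomorphisms between subgroups of $P$, and it suffices to arrange, for every $S\le P$, that an injective homomorphism $\psi\colon S\to P$ has the form $c_g|_S$ for some $g\in G$ if and only if $\psi\in\Hom_\FC(S,\psi(S))$. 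There are two inclusions to secure: realize every $\FC$-isomorphism (a lower bound on fusion), and create no further fusion among subgroups of $P$ (an upper bound).

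The lower bound is immediate from the regular-representation construction of Example~\ref{exfused}\,\eqref{ex2} applied with $H=P$: embedding $P$ into $\Sym(P)$ by $x\mapsto\sigma_x$, every $\FC$-isomorphism $\phi\colon Q\to R$ is realized by conjugation with a permutation $\hat\phi\in\Sym(P)$ extending $\phi$. So any group containing $P$ together with one realizing element $\hat\phi$ per $\FC$-isomorphism already sees all of $\FC$. The entire difficulty is the upper bound: a product $\hat\phi_1\hat\phi_2\cdots$ might conjugate some $S\le P$ back into $P$ by a map that is \emph{not} in $\FC$, so the naive choice $G=\langle\sigma(P),\hat\phi:\phi\in\FC\rangle\le\Sym(P)$ over-realizes fusion.

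The mechanism for controlling fusion is cleanest infinitely, and this is the natural first approach (cf. the footnote on HNN-extensions). Starting from the base group $P$, I would adjoin, for each $\FC$-isomorphism $\phi\colon Q\to R$, a stable letter $t_\phi$ with $t_\phi q t_\phi^{-1}=\phi(q)$ for $q\in Q$, forming an iterated HNN-extension. Britton's lemma then forces any word conjugating one element of $P$ to another to reduce through these prescribed moves, so two subgroups of $P$ are conjugate in the resulting group exactly when related by a composite of the $\phi$'s and of inner automorphisms of $P$; since $\FC$ is closed under composition, inverse and restriction, this composite always lies in $\FC$. This proves a version of the statement, but yields an \emph{infinite} group.

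The genuine obstacle in Park's theorem is therefore finiteness: I must reproduce the HNN fusion-control inside a finite group. The remedy is to realize the same control in a finite permutation group, e.g. a wreath product $\Sym(P)\wr\Sym(\Omega)$ on a suitable finite index set $\Omega$, embedding $P$ through the regular representation into one ``diagonal'' coordinate and realizing each $\hat\phi$ together with a coordinate permutation, arranged so that the coordinate-action of any $g\in G$ carrying $S\le P$ back into $P$ forces $g$ to encode an actual composite of the prescribed $\FC$-isomorphisms; closure of $\FC$ then gives $c_g|_S\in\FC$. The crux — and the step I expect to be hardest — is choosing this finite model so that no accidental coincidence of $P$-conjugations survives beyond those dictated by $\FC$. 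The reduction to isomorphisms, the lower bound, and the categorical bookkeeping via closure of $\FC$ are all routine by comparison.
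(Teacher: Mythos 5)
The paper states Park's theorem without proof, so I am measuring your proposal against Park's actual argument (the reference \texttt{ParkExoticity2} in the bibliography). Your preliminary reductions are sound: factoring every morphism as an $\FC$-isomorphism onto its image followed by an inclusion, realizing each isomorphism by some $\hat\phi\in\Sym(P)$ as in \autoref{exfused}(ii), and recognizing that the subgroup of $\Sym(P)$ generated by $\sigma(P)$ and the $\hat\phi$ will in general over-realize fusion. The HNN/Britton digression is also a correct account of how the \emph{infinite} realization (Leary--Stancu, Robinson) works. But the theorem is precisely the finite statement, and at the decisive point your text says only that one should find a wreath product ``arranged so that'' no accidental fusion survives, and then declares this arrangement to be the hardest step. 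That step \emph{is} the theorem; deferring it leaves a genuine gap rather than a proof.

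The idea you are missing is that Park does not build $G$ from below as a subgroup generated by chosen elements --- which is exactly why your upper bound is intractable --- but from above, as the full automorphism group of an auxiliary structure. Concretely, one forms the finite $(P,P)$-biset $\Omega=\coprod_{(Q,\phi)}P\times_{(Q,\phi)}P$, the disjoint union running over the morphisms $\phi\in\Hom_\FC(Q,P)$; this $\Omega$ is free as a left $P$-set, say of rank $n$, so the group $G$ of left-$P$-set automorphisms of $\Omega$ is isomorphic to $P\wr S_n$ (not $\Sym(P)\wr\Sym(\Omega)$, whose base group already realizes all of $\UC(P)$), and the right $P$-action embeds $P$ into $G$. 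The lower bound holds because right translation by each defining $\phi$ is a left-$P$-set automorphism, and the upper bound comes for free: the point stabilizers of $\Omega$ under $P\times P$ are exactly the twisted diagonals $\{(\phi(u),u):u\in Q\}$ built from morphisms of $\FC$, and the axioms of a fusion system (closure under restriction, composition with $\Hom_P$, and inverses) guarantee that any $g\in G$ conjugating $S\le P$ back into $P$ induces a map already in $\FC$; one reads this off by comparing stabilizers along an orbit, with no generator-by-generator bookkeeping. Without this (or an equivalent) mechanism your plan does not close, because nothing in a subgroup of a wreath product generated by ad hoc lifts of the $\hat\phi$ prevents products of those lifts from creating new conjugations between subgroups of $P$.
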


\autoref{park} remains true even for arbitrary finite groups $P$ with appropriate definitions (see \autoref{exfused}\eqref{ex2} for $\FC=\UC(P)$).

\begin{Def}
Let $\FC$ be a fusion system on $P$ and $S,T\le P$. 
\begin{itemize}
\item $S,T$ are called \emph{$\FC$-conjugate} if there exists an isomorphism $\phi\colon S\to T$ in $\FC$.
\item $S$ is called \emph{$\FC$-automized} if $\Aut_P(S)\in\Syl_p(\Aut_\FC(S))$. 
\item $S$ is called \emph{$\FC$-centralized}\footnote{often called \emph{fully} $\FC$-centralized/normalized} if $|\C_P(S)|\ge|\C_P(T)|$ for all $\FC$-conjugates $T$ of $S$.
\item $S$ is called \emph{$\FC$-normalized} if $|\N_P(S)|\ge|\N_P(T)|$ for all $\FC$-conjugates $T$ of $S$.
\item For an isomorphism $\phi\colon S\to T$ let $N_\phi$ be the preimage of $\Aut_P(S)\cap \phi^{-1}\Aut_P(T)\phi$ under the conjugation map $\N_P(S)\to\Aut_P(S)$, $x\mapsto c_x$, i.\,e.
\[N_\phi:=\bigl\{x\in\N_P(S):\phi c_x\phi^{-1}\in\Aut_P(T)\bigr\}.\] 
\item $T$ is called \emph{$\FC$-receptive} if every isomorphism $\phi\colon S\to T$ in $\FC$ extends to $N_\phi$ (note that $S\C_P(S)\le N_\phi\le\N_P(S)$).
\end{itemize}
\end{Def}

\begin{Ex}\hfill
\begin{enumerate}[(i)]
\item If $S,T\le P\le G$ are fused in $G$, then they are $\FC_P(G)$-conjugate.
\item If $P\in\Syl_p(G)$, then $P$ is automized in $\FC_P(G)$, because $P\C_G(P)/\C_G(P)\in\Syl_p(\N_G(P)/\C_G(P))$. 
\item Every central subgroup of $P$ is $\FC$-centralized and every normal subgroup is $\FC$-normalized.
\item Every $\FC$-receptive subgroup is $\FC$-centralized: Let $T\le P$ be receptive and $\phi\colon S\to T$ an isomorphism in $\FC$. Then $\phi$ extends to $\hat\phi\colon N_\phi\to P$. For $s\in S$ and $g\in\C_P(S)$ we have $\hat\phi(g)\phi(s)\hat\phi(g)^{-1}=\hat\phi(gsg^{-1})=\phi(s)$ and $\hat\phi(\C_P(S))\le\C_P(T)$. Since morphisms are injective, it follows that $|\C_P(S)|\le|\C_P(T)|$.
\item Every $\FC$-centralized, $\FC$-automized subgroup $S\le P$ is $\FC$-normalized. This follows from $|\N_P(S)|=|\Aut_P(S)||\C_P(S)|$.
\item Let $S:=\langle(12)(34)\rangle\le P:=\langle(1234),(13)\rangle\le G:=S_4$ and $\FC:=\FC_P(G)$. Then $S$ is neither $\FC$-centralized nor $\FC$-normalized since $S$ is $\FC$-conjugate to $\Z(P)=\langle(13)(24)\rangle$. 
\end{enumerate}
\end{Ex}

\begin{Thm}\label{thmequivdef}
The following assertions for a fusion system $\FC$ on $P$ are equivalent:
\begin{enumerate}[(1)]
\item \textup(\textsc{Roberts--Shpectorov}\textup) Every subgroup of $P$ is $\FC$-conjugate to an automized, receptive subgroup.
\item $P$ is automized and every subgroup of $P$ is $\FC$-conjugate to a normalized, receptive subgroup.
\item \textup(\textsc{Stancu}\textup) $P$ is automized and every normalized subgroup of $P$ is receptive. 
\item \textup(\textsc{Broto--Levi--Oliver}\textup) Every normalized subgroup of $P$ is centralized and automized and every centralized subgroup is receptive.
\end{enumerate}
Under these circumstances we call $\FC$ \emph{saturated}.
\end{Thm}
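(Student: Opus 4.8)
The plan is to show that each of the four conditions is equivalent to the single symmetric hypothesis

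(R) every subgroup of $P$ is $\FC$-conjugate to a receptive subgroup,

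and to concentrate all the genuine work in one lemma. That each of (1)--(4) implies (R) is pure bookkeeping: (1) and (2) assert the existence of receptive representatives outright, while under (3) (resp. (4)) the normalized (resp. centralized) representative of a class is receptive, and every subgroup is $\FC$-conjugate to a normalized one (maximize $|\N_P(\cdot)|$ over the finite class) and to a centralized one (maximize $|\C_P(\cdot)|$). Conversely I would deduce all four from (R) together with the following.

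\textbf{Key Lemma.} Assuming (R), every normalized subgroup $T$ of $P$ is automized and receptive (hence, by the Example preceding the statement, also centralized).

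Granting this, the four implications out of (R) are short. Applied to $P$ itself (whose only $\FC$-conjugate is $P$) the lemma shows $P$ is automized, supplying the ``$P$ is automized'' clauses of (2) and (3). For (1) and (2), given any $S$ pick its normalized conjugate $T$; the lemma makes $T$ automized and receptive, so $S$ is conjugate to an automized (and normalized) receptive subgroup. For (3) the lemma is exactly the assertion that normalized subgroups are receptive. For (4) the lemma gives ``normalized $\Rightarrow$ centralized and automized''; it remains to upgrade ``normalized $\Rightarrow$ receptive'' to ``centralized $\Rightarrow$ receptive''. Here, given a centralized $U$, I would take a normalized (hence receptive and automized) conjugate $T$ and an isomorphism $\chi\colon U\to T$; since $\Aut_P(T)\in\Syl_p(\Aut_\FC(T))$, precomposing $\chi$ with a suitable element of $\Aut_\FC(T)$ arranges $\chi\Aut_P(U)\chi^{-1}\le\Aut_P(T)$, so $\chi$ extends to $\hat\chi\colon\N_P(U)\to\N_P(T)$. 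A diagram chase then transports receptivity of $T$ back: for any $\phi\colon V\to U$ one checks $N_\phi\le N_{\chi\phi}$, extends $\chi\phi$ over $N_{\chi\phi}$ using receptivity of $T$, and applies $\hat\chi^{-1}$ to recover an extension of $\phi$ over $N_\phi$; the full centralization of $U$ is what forces $\hat\chi(\C_P(U))=\C_P(T)$, so that $\hat\chi^{-1}$ is defined on the relevant image.

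Everything thus rests on the Key Lemma, which is the main obstacle. The mechanism I would use is to fix via (R) a receptive conjugate $R$ of $T$ and to view $\operatorname{Iso}_\FC(S,R)$, for conjugates $S$, as an $\bigl(\Aut_P(R),\Aut_P(S)\bigr)$-biset isomorphic to $\Aut_\FC(R)$ under left and right translation. In this language receptivity of $R$ says that each double coset $\Aut_P(R)\,\alpha\,\Aut_P(S)$ has a representative realizing the intersection $\Aut_P(S)\cap\alpha^{-1}\Aut_P(R)\alpha$ by an honest element of $\N_P(S)$ that extends inside $P$. Combining this with a Sylow argument in the finite group $\Aut_\FC(R)$---conjugating the $p$-group $\Aut_P(S)$ into a Sylow $p$-subgroup containing $\Aut_P(R)$---and with the maximality of $|\N_P(T)|$, I would force $\Aut_P(R)$, and then $\Aut_P(T)$ after extending an isomorphism $T\to R$ over $\N_P(T)$, to be a full Sylow $p$-subgroup of the relevant automorphism group; receptivity of $T$ then follows by pulling back the extension property of $R$ along that isomorphism. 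The delicate point---the real content of saturation---is precisely this identification of ``normalized'' with ``$\Aut_P(\cdot)$ is Sylow in $\Aut_\FC(\cdot)$'', i.e. showing that $|\N_P(\cdot)|$ on a class is governed by the Sylow $p$-subgroups of the automorphism groups; the biset bookkeeping and the observation that extensions of isomorphisms land in $\N_P(T)$ are routine by comparison.
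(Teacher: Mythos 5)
The paper states this theorem without proof (it is a survey citing Roberts--Shpectorov, Stancu and Broto--Levi--Oliver), so there is no in-text argument to compare against; judged on its own terms, your proposal has a fatal flaw at its very first step. You reduce everything to the condition (R) ``every subgroup of $P$ is $\FC$-conjugate to a receptive subgroup'' and claim, via your Key Lemma, that (R) implies saturation --- in particular that (R) forces $P$ to be automized. This is false, and the ``automized'' clause you discard from condition (1) is exactly the content you cannot recover. Counterexample: take $P=C_{p^2}$ and $\FC=\UC(P)$. Every $\FC$-conjugacy class of subgroups is a singleton, and for any subgroup $T$ and any isomorphism $\phi\colon S\to T$ in $\FC$ one has $S=T$ and $N_\phi=P$ (since $P$ is abelian, $\Aut_P(T)=1$ and every $c_x$ restricts to $\id_T$); because restriction $\Aut(C_{p^2})\to\Aut(C_{p^m})$ is surjective, every such $\phi$ extends to $P$, so \emph{every} subgroup is receptive and (R) holds. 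Yet $\Aut_P(P)=1$ is not a Sylow $p$-subgroup of $\Aut_\FC(P)=\Aut(C_{p^2})$, which has order divisible by $p$; so $P$ is not automized and $\FC$ is not saturated (as the paper itself notes via Gaschütz's theorem). Hence your Key Lemma is not merely unproven but false, and no amount of biset or Sylow bookkeeping inside $\Aut_\FC(R)$ can rescue it: in this example receptivity and maximality of $|\N_P(\cdot)|$ carry no information at all, because all the relevant groups $\Aut_P(\cdot)$ are trivial while $\Aut_\FC(\cdot)$ is not a $p'$-group.

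The downstream pieces of your argument are mostly sound and standard --- that (1)--(4) each imply (R), and the transport of receptivity from a normalized representative $T$ to a centralized subgroup $U$ by first conjugating $\Aut_P(U)$ into $\Aut_P(T)\in\Syl_p(\Aut_\FC(T))$ and extending over $N_\chi=\N_P(U)$ is exactly the usual device for proving $(3)\Rightarrow(4)$. But these steps all consume the automized hypothesis as an input; they cannot produce it. A correct proof must take as its starting point the full Roberts--Shpectorov condition (automized \emph{and} receptive representatives) and show that the Sylow property propagates across a conjugacy class via extensions of isomorphisms (the surjectivity statement you allude to, that for $S$ conjugate to an automized receptive $T$ one can choose $\phi\colon S\to T$ with $\phi\Aut_P(S)\phi^{-1}\le\Aut_P(T)$ and then compare $|\N_P(S)|$ with $|\N_P(T)|$). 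As written, your proof establishes only that (1)--(4) share the common consequence (R), not that they are equivalent.
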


For a saturated fusion system $\FC$ on $P$ and $S\le P$ we have
\begin{enumerate}[(i)]
\item $S$ is $\FC$-centralized if and only if $S$ is $\FC$-receptive.
\item $S$ is $\FC$-normalized if and only if $S$ is $\FC$-centralized and $\FC$-automized. 
\end{enumerate}

\begin{Thm}
If $P\in\Syl_p(G)$, then $\FC_P(G)$ is saturated.
\end{Thm}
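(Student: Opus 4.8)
The plan is to verify the Broto–Levi–Oliver axioms, i.e. condition (4) of \autoref{thmequivdef}: every $\FC$-normalized subgroup is $\FC$-centralized and $\FC$-automized, and every $\FC$-centralized subgroup is $\FC$-receptive. Here $\FC$-conjugacy is exactly $G$-conjugacy and $\Hom_\FC(S,T)=\Hom_G(S,T)$, so everything reduces to Sylow theory inside $G$. The backbone of the argument is the translation
\[
S\text{ is }\FC\text{-normalized}\iff\N_P(S)\in\Syl_p(\N_G(S)),\qquad S\text{ is }\FC\text{-centralized}\iff\C_P(S)\in\Syl_p(\C_G(S)).
\]

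First I would prove these by a single Sylow-conjugation trick. Given $S$, pick a Sylow $p$-subgroup $Q$ of $\N_G(S)$ (resp. $\C_G(S)$) containing $\N_P(S)$ (resp. $\C_P(S)$). Then $SQ$ is a $p$-subgroup of $G$, so ${}^g(SQ)\le P$ for a suitable $g\in G$; since ${}^gQ$ normalizes (resp. centralizes) ${}^gS$, we get ${}^gQ\le\N_P({}^gS)$ (resp. ${}^gQ\le\C_P({}^gS)$), whence $|\N_P({}^gS)|\ge|\N_G(S)|_p$ (resp. $|\C_P({}^gS)|\ge|\C_G(S)|_p$). If $S$ is $\FC$-normalized (resp. $\FC$-centralized), the maximality of $|\N_P(S)|$ (resp. $|\C_P(S)|$) forces equality, giving the Sylow condition; the converse directions are immediate because $\C_P(T)\le\C_G(T)$ and $\N_P(T)\le\N_G(T)$ are always $p$-subgroups. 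The first BLO clause then drops out: if $S$ is $\FC$-normalized, intersecting the Sylow subgroup $\N_P(S)$ with the normal subgroup $\C_G(S)\unlhd\N_G(S)$ gives $\C_P(S)=\N_P(S)\cap\C_G(S)\in\Syl_p(\C_G(S))$, so $S$ is $\FC$-centralized, and pushing $\N_P(S)$ through the quotient $\N_G(S)\to\N_G(S)/\C_G(S)=\Aut_\FC(S)$ yields $\Aut_P(S)\in\Syl_p(\Aut_\FC(S))$, so $S$ is $\FC$-automized.

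The hard part will be the extension axiom: every $\FC$-centralized $T$ is $\FC$-receptive. Given an isomorphism $\phi\colon S\to T$ in $\FC$, write $\phi=c_g|_S$ for some $g\in G$ with ${}^gS=T$; the idea is to correct $g$ by an element of $\C_G(T)$ so that it carries all of $N_\phi$ into $P$ while still restricting to $\phi$ on $S$. I would first show ${}^gN_\phi\le L:=\N_P(T)\C_G(T)$, which is a subgroup of $\N_G(T)$ since $\C_G(T)\unlhd\N_G(T)$: by definition of $N_\phi$, each $x\in N_\phi$ satisfies $\phi c_x\phi^{-1}=c_y$ with $y\in\N_P(T)$, so ${}^gx\in y\,\C_G(T)$. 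Because $T$ is $\FC$-centralized we have $\C_P(T)\in\Syl_p(\C_G(T))$, and the index computation $|L|_p=|\N_P(T)|\,|\C_G(T)|_p/|\C_P(T)|=|\N_P(T)|$ shows $\N_P(T)\in\Syl_p(L)$. Since ${}^gN_\phi$ is a $p$-subgroup of $L$, Sylow's theorem gives $\ell\in L$ with ${}^\ell({}^gN_\phi)\le\N_P(T)$; writing $\ell=yc$ with $y\in\N_P(T)$, $c\in\C_G(T)$ and using $y\in\N_P(T)$ to absorb the $p$-part, one obtains $c\in\C_G(T)$ with ${}^c({}^gN_\phi)\le\N_P(T)\le P$.

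Setting $h:=cg$ then finishes it: for $x\in N_\phi$ we have ${}^hx={}^c({}^gx)\in\N_P(T)\le P$, while for $s\in S$ we have ${}^hs={}^c(\phi(s))=\phi(s)$ because $\phi(s)\in T$ and $c$ centralizes $T$; thus $c_h\colon N_\phi\to P$ is a morphism in $\FC$ extending $\phi$, so $T$ is $\FC$-receptive. Finally $P$ is trivially $\FC$-normalized, so the first clause also yields that $P$ is $\FC$-automized, completing the verification of condition (4). I expect the genuine obstacle to be exactly the receptivity step: getting $L$ to be a subgroup whose Sylow $p$-subgroup is $\N_P(T)$ (this is where full centralization is essential), and the bookkeeping that lets the conjugating element be chosen inside $\C_G(T)$, so that the resulting map truly restricts to $\phi$ rather than merely to some isomorphism $S\to T$.
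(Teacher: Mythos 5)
Your proof is correct, but it verifies a different one of the equivalent saturation axioms than the paper does. The paper checks condition (1) of \autoref{thmequivdef} (Roberts--Shpectorov): starting from an arbitrary $Q\le P$, it conjugates a Sylow $p$-subgroup $R$ of $\N_G(Q)$ containing $\N_P(Q)$ into $P$, so that the resulting conjugate $T$ satisfies $\N_P(T)\in\Syl_p(\N_G(T))$, and then proves receptivity only for that particular $T$; the key inclusion ${}^aN_\phi\le\N_P(T)\C_G(T)$ and the final correction of the conjugating element by an element of $\C_G(T)$ are essentially identical to your argument. You instead check condition (4) (Broto--Levi--Oliver), which forces you first to establish the translations ``$S$ is $\FC$-normalized $\iff$ $\N_P(S)\in\Syl_p(\N_G(S))$'' and ``$S$ is $\FC$-centralized $\iff$ $\C_P(S)\in\Syl_p(\C_G(S))$'' --- your Sylow-conjugation trick for these is sound --- and then to prove receptivity for \emph{every} centralized $T$, which costs you the extra order computation showing $\N_P(T)\in\Syl_p\bigl(\N_P(T)\C_G(T)\bigr)$ from $\C_P(T)\in\Syl_p(\C_G(T))$; the paper gets this for free because its chosen $T$ satisfies the stronger normalizer Sylow condition. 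The trade-off: the paper's route is shorter, since condition (1) only demands one good representative in each conjugacy class, while your route produces the standard and independently useful Sylow characterizations of normalized and centralized subgroups of $\FC_P(G)$ as a by-product. Both arguments rely on the unproved equivalence of the four conditions in \autoref{thmequivdef}.
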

\begin{proof}
We prove \autoref{thmequivdef}(1) for $\FC:=\FC_P(G)$.
Let $Q\le P$ and $\N_P(Q)\le R\in\Syl_p(\N_G(Q))$. By Sylow's theorem, there exists $g\in G$ such that 
\[T:={}^gQ\le {}^gR\le P.\] 
Since $^gR\in\Syl_p({}^g\N_G(Q))=\Syl_p(\N_G(T))$, we have $^gR=\N_P(T)$ and $T$ is $\FC$-automized. 

Now let $\phi\colon S\to T$ be an arbitrary isomorphism in $\FC$. Then there exists $a\in G$ with $\phi(s)={}^as$ for all $s\in S$. 
For $x\in N_\phi$ there exists $y\in\N_P(T)$ such that 
\[^{axa^{-1}}t=(\phi c_x\phi^{-1})(t)={}^yt\]
for all $t\in T$. Hence, $y^{-1}axa^{-1}\in\C_G(T)$ and $axa^{-1}\in\N_P(T)\C_G(T)$.
By definition, $N_\phi\le\N_P(S)$ is a $p$-group and $^aN_\phi$ is a $p$-subgroup of $\N_P(T)\C_G(T)$. Since $\N_P(T)$ is a Sylow $p$-subgroup of $\N_G(T)\ge \N_P(T)\C_G(T)$, there exist $h\in\N_P(T)$ and $z\in\C_G(T)$ with $^{hza}N_\phi\le\N_P(T)$.
Then also $^{za}N_\phi \le\N_P(T)\le P$. For $s\in S$ we have $^{za}s={}^z\phi(s)=\phi(s)$. Hence, the conjugation with $za$ is an extension of $\phi$ to $N_\phi$ in $\FC$. Consequently, $T$ is $\FC$-receptive.
\end{proof}

\begin{Ex}
Let $|P|>p$. A theorem of Gaschütz asserts that $P$ has an outer automorphism of $p$-power order. Hence, $P$ is not automized in $\UC(P)$ and $\UC(P)$ is not saturated.
\end{Ex}

\begin{Thm}[\textsc{Robinson}, \textsc{Leary--Stancu}]
For every saturated fusion system $\FC$ on $P$ there exists an \emph{infinite} group $G$ with $P\in\Syl_p(G)$ such that $\FC=\FC_P(G)$. 
\end{Thm}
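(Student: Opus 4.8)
The plan is to follow the approach of Leary--Stancu and realize $\FC$ inside an iterated HNN extension of $P$. The first step is to fix a finite generating set of isomorphisms for $\FC$. Since $P$ is finite, $\FC$ has only finitely many morphisms, so I may simply take $\phi_1,\dots,\phi_n$ to be \emph{all} isomorphisms occurring in $\FC$, say $\phi_i\colon A_i\to B_i$ with $A_i,B_i\le P$. (Alperin's fusion theorem, available because $\FC$ is saturated, would let me shrink this to automorphisms of $P$ and of the $\FC$-essential subgroups, which is tidier but not logically necessary.) By construction $\FC$ is the smallest fusion system on $P$ containing $\Inn(P)$ together with the $\phi_i$.

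Next I would build the group as the iterated HNN extension
\[
 G:=\bigl\langle P,\ t_1,\dots,t_n \ \bigm|\ t_i x t_i^{-1}=\phi_i(x)\ \text{ for } x\in A_i,\ 1\le i\le n\bigr\rangle,
\]
with stable letters $t_i$ realizing the $\phi_i$. By the standard theory of HNN extensions $P$ embeds into $G$, and conjugation by $t_i$ induces $\phi_i$ on $A_i$. Hence every generating isomorphism of $\FC$, and therefore every morphism of $\FC$, is induced by $G$-conjugation, giving $\FC\subseteq\FC_P(G)$.

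To see that $P$ is Sylow, I would use Bass--Serre theory. The group $G$ is the fundamental group of a graph of groups with a single vertex carrying $P$ and $n$ loops carrying the edge groups $A_i$, so $G$ acts without inversion on the associated tree $X$, with vertex stabilizers the $G$-conjugates of $P$ and edge stabilizers conjugates of the $A_i$. A finite subgroup of $G$ acts on $X$ and, being finite, fixes a vertex, hence is conjugate into $P$. Thus $P$ is a maximal finite $p$-subgroup and every finite $p$-subgroup of $G$ is $G$-conjugate into $P$, which is exactly the sense in which $P$ is a Sylow $p$-subgroup of the infinite group $G$.

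The main work, and the main obstacle, is the reverse inclusion $\FC_P(G)\subseteq\FC$. Given $Q,R\le P$ and $g\in G$ with $gQg^{-1}=R$, both $Q,R$ fix the base vertex $v_0$ (stabilizer $P$), and since $gxg^{-1}\in R$ fixes $v_0$ for $x\in Q$, the subgroup $Q$ also fixes $g^{-1}v_0$; as $X$ is a tree, $Q$ fixes the geodesic from $v_0$ to $g^{-1}v_0$ pointwise, in particular every edge along it. Reading off a reduced expression $g=s_0 t_{i_1}^{\varepsilon_1} s_1\cdots t_{i_m}^{\varepsilon_m} s_m$ ($s_j\in P$, $\varepsilon_j=\pm1$) from this geodesic, Britton's lemma forces, at each syllable, the subgroup currently being conjugated to lie inside the relevant edge group $A_{i_j}$ or $B_{i_j}$, so that conjugation by $t_{i_j}^{\varepsilon_j}$ is precisely the application of $\phi_{i_j}^{\varepsilon_j}$ to a subgroup of its domain. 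Consequently the conjugation $Q\to R$ factors as an alternating composite of inner automorphisms of $P$ (from the $s_j$) and the maps $\phi_{i_j}^{\pm1}$, all of which lie in $\FC$; since $\FC$ is closed under composition and restriction, $Q\to R$ is a morphism of $\FC$. The delicate point is exactly this bookkeeping: one must verify that each stable-letter reduction applies a generating isomorphism to a subgroup genuinely contained in its domain, so that no extra fusion is created and the factorization stays inside $\FC$. (Robinson's alternative construction replaces the iterated HNN extension by an iterated amalgamated free product assembled from the $\FC$-normalizers, with the same Bass--Serre analysis of geodesics controlling fusion.)
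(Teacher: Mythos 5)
The paper states this theorem without proof, merely attributing it to Robinson and to Leary--Stancu; your argument is a correct reconstruction of the Leary--Stancu proof, namely the iterated HNN extension of $P$ along a generating set of isomorphisms, with Bass--Serre theory (or equivalently Britton's lemma) showing both that every finite subgroup of $G$ is conjugate into $P$ and that conjugation in $G$ factors through the generating morphisms, so no extra fusion arises. You also correctly observe that saturation is not actually needed for this construction and that Alperin's fusion theorem is an optional convenience for choosing the generators.
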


\begin{Def}
A saturated fusion system $\FC$ is called \emph{exotic} if there is no \emph{finite} group $G$ with $P\in\Syl_p(G)$ and $\FC=\FC_P(G)$. 
\end{Def}

\begin{Ex}\label{exexotic}\hfill
\begin{enumerate}[(i)]
\item For $p=2$ the only known simple exotic fusion systems are defined on the Sylow $2$-subgroups of $\Spin_7(q)\cong 2.\Omega_7(q)$ where $q$ is an odd prime power. These are called the \emph{Solomon fusion systems}. For $q=3$ we have $|P|=2^{10}$.

\item  For $p>2$ many families of exotic fusion systems have been discovered recently. For instance, Ruiz--Viruel constructed an exotic fusion system $\FC$ on the extraspecial group $P$ of order $7^3$ with exponent $7$ such that all non-trivial elements of $P$ are $\FC$-conjugate.
\end{enumerate}
\end{Ex}

Most of the fusion and transfer theorems for finite groups stated in \autoref{sec1} have been translated to fusion systems. For instance, a saturated fusion system $\FC$ is trivial if and only if $\Aut_\FC(Q)$ is a $p$-group for every $Q\le P$. This will be generalized in the next section. To state some more theorems, we need the following constructions.

\begin{Def}
Let $\FC$ be a saturated fusion system on $P$ and $Q\le P$.
\begin{itemize}
\item The fusion system $\C_\FC(Q)$ on $\C_P(Q)$ consists of the morphisms $\phi\colon S\to T$ such that there exists a morphism $\psi\colon QS\to QT$ in $\FC$ with $\psi_S=\phi$ and $\psi_Q=\id_Q$. 

\item The fusion system $\N_\FC(Q)$ on $\N_P(Q)$ consists of the morphisms $\phi\colon S\to T$ such that there exists a morphism $\psi\colon QS\to QT$ in $\FC$ with $\psi_S=\phi$ and $\psi(Q)=Q$. 

\item The fusion system $Q\C_\FC(Q)$ on $Q\C_P(Q)$ consists of the morphisms $\phi\colon S\to T$ such that there exists a morphism $\psi\colon QS\to QT$ in $\FC$ with $\psi_S=\phi$ and $\psi_Q\in\Inn(Q)$. 
\end{itemize}
\end{Def}

Recall that every subgroup $Q\le P$ is $\FC$-conjugate to an $\FC$-normalized subgroup. In this case, Puig has shown that $\C_\FC(Q)$, $\N_\FC(Q)$ and $Q\C_\FC(Q)$ are saturated. 

\begin{Ex}
Let $Q\le P\in\Syl_p(G)$ and $\FC=\FC_P(G)$. If $Q$ is $\FC$-normalized, then $\C_\FC(Q)=\FC_{\C_P(Q)}(\C_G(Q))$, $\N_\FC(Q)=\FC_{\N_P(Q)}(\N_G(Q))$ and $Q\C_\FC(Q)=\FC_{Q\C_P(Q)}(Q\C_G(Q))$. 
\end{Ex}

\begin{Thm}[\textsc{Kessar--Linckelmann}]
A saturated fusion system $\FC$ on $P$ with $p>2$ is trivial if and only if $\N_\FC\bigl(\Z(\J(P))\bigr)$ is trivial.  
\end{Thm}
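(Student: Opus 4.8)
The forward implication is immediate: if $\FC=\FC_P(P)$, then every $\FC$-morphism is induced by conjugation in $P$, and the same holds \emph{a fortiori} for the subsystem $\N_\FC(\Z(\J(P)))$, which is therefore trivial. For the converse I would use the criterion recalled above, that a saturated fusion system is trivial precisely when $\Aut_\FC(Q)$ is a $p$-group for every $Q\le P$. Write $Z:=\Z(\J(P))$. Since $Z$ is characteristic in $P$ it is normal, hence $\FC$-normalized, and $\N_P(Z)=P$; thus $\N_\FC(Z)$ is a saturated fusion system on $P$ itself, and the hypothesis says that every $\phi\colon S\to T$ in $\FC$ admitting an extension $ZS\to ZT$ stabilising $Z$ is already induced by $P$. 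The goal is to remove this restriction, i.e. to prove $\N_\FC(Z)=\FC$.

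First I would settle the case where $\FC$ is \emph{constrained}, meaning $\C_P(\pcore_p(\FC))\le\pcore_p(\FC)$ for the largest normal subgroup $\pcore_p(\FC)$ of $\FC$. Here the model theorem of Broto--Castellana--Grodal--Levi--Oliver supplies a finite group $G$ with $P\in\Syl_p(G)$, $\pcore_{p'}(G)=1$ and $\FC=\FC_P(G)$. As $Z$ is normal in $P$ and $\FC$-normalized, $\N_\FC(Z)=\FC_P(\N_G(Z))$, so the hypothesis means that no $p$-elements are fused in $\N_G(Z)$, i.e. that $\N_G(Z)$ is $p$-nilpotent by \autoref{thmfrob}. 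Because $p>2$, the $p$-nilpotency half of Glauberman's $\ZJ$-theorem (\autoref{ZJ}) --- which carries \emph{no} $Qd(p)$-freeness hypothesis, and this is exactly why none appears in our statement --- forces $G$ to be $p$-nilpotent; with $\pcore_{p'}(G)=1$ this gives $G=P$, whence $\FC=\FC_P(P)$ is trivial.

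It remains to reduce the general case to the constrained one. Proceeding by induction on $|P|$, I would take a minimal counterexample and aim to prove that it is constrained, that is, that $\pcore_p(\FC)$ is centric; the previous paragraph would then furnish the contradiction. This reduction is the crux of the matter and the step I expect to be hardest. The obstruction to $Z$ being normal in $\FC$ is a quadratic, $Qd(p)$-type configuration of $\SL_2(p)$ acting on $C_p^2$, and the key point is that any such configuration would render $\N_\FC(Z)$ non-trivial, against the hypothesis; concretely this rests on Glauberman's replacement theorem, the classification of quadratic $\FF_p[\SL_2(p)]$-modules, and a Thompson-factorization argument controlling $\J(P)$ under $\FC$-fusion, each of which uses $p$ odd decisively. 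That oddness is indispensable is visible already in the $p=2$ theory: there one must replace $\Z(\J(P))$ by Stellmacher's characteristic subgroup $W$ (\autoref{Stellmacher}), and the Solomon systems (\autoref{exexotic}) show that no literal $\ZJ$-analogue can survive.
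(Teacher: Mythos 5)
The paper states this theorem without proof, so I am measuring your proposal against the original argument of Kessar--Linckelmann. Your forward implication is fine, and your constrained case is correct and is in fact the engine of the actual proof: since $Z:=\Z(\J(P))$ is normal in $P$, hence $\FC$-normalized with $\N_P(Z)=P$, the model $G$ from \autoref{thmmodel} satisfies $\N_\FC(Z)=\FC_P(\N_G(Z))$ with $P\in\Syl_p(\N_G(Z))$; triviality of that system is $p$-nilpotency of $\N_G(Z)$ by \autoref{thmfrob}, the first half of \autoref{ZJ} (which indeed needs no $Qd(p)$-freeness) makes $G$ itself $p$-nilpotent, and $\pcore_{p'}(G)=1$ forces $G=P$.

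The genuine gap is the reduction to the constrained case, which you rightly flag as the crux but whose sketch would not work as written. Induction on $|P|$ has no foothold the way you set it up: $\N_\FC(Z)$ is a fusion system on $\N_P(Z)=P$ itself, so the hypothesis never hands you a system on a smaller $p$-group. Kessar--Linckelmann instead take a counterexample minimal first in $|P|$ and then in the number of morphisms of $\FC$, and use Alperin--Goldschmidt's fusion theorem (\autoref{AFT}) together with the normalizer subsystems $\N_\FC(Q)$ of essential subgroups $Q$ --- those with $\N_P(Q)<P$ being handled by the induction on $|P|$ --- to show that a minimal counterexample has a normal centric subgroup, i.e.\ is constrained. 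Moreover, the machinery you invoke for this step (Glauberman's replacement theorem, quadratic $\SL_2(p)$-modules, Thompson factorization) is precisely the content of the \emph{classical} theorem that you have already black-boxed inside the constrained case; the fusion-theoretic reduction is purely formal and uses none of it, nor does it require showing $Z\unlhd\FC$ --- that is the $Qd(p)$-free control-of-fusion statement, not the triviality statement. If you want a reduction entirely within the toolkit of these notes, \autoref{glesser} does it in one line: a non-trivial $\FC$ with $p>2$ contains a non-trivial constrained subsystem $\EC$ on $P$; by your constrained case $\N_\EC(Z)$ contains a morphism not induced by $P$, and $\N_\EC(Z)\subseteq\N_\FC(Z)$ because any $Z$-stabilizing extension in $\EC$ is one in $\FC$. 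You would, however, need to check that Glesser's theorem is not itself proved using the statement at hand, since historically it came later.
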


\begin{Def}
For a saturated fusion system $\FC$ on $P$ we define
\begin{align*}
\Z(\FC)&:=\bigl\{x\in P:\phi(x)=x\ \forall \phi\in\Hom_\FC(\langle x\rangle,P)\bigr\}\qquad(\emph{center}),\\
\foc(\FC)&:=\langle \phi(x)x^{-1}:x\in P,\ \phi\in\Hom_\FC(\langle x\rangle,P)\rangle\qquad(\emph{focal subgroup}),\\
\hyp(\FC)&:=\langle \phi(x)x^{-1}:x\in Q\le P,\ \phi\in\pcore^p(\Aut_\FC(Q))\rangle\qquad(\emph{hyperfocal subgroup}).
\end{align*}
\end{Def}

\begin{Ex}\label{exfoc}\hfill
\begin{enumerate}[(i)]
\item The center $\Z(\FC)$ is the largest subgroup $Q\le P$ such that $\C_\FC(Q)=\FC$.

\item One can show that $\foc(\FC)=\hyp(\FC)P'$ and $\foc(\FC)\cap\Z(\FC)=P'\cap\Z(\FC)$. In particular, the \emph{Fitting decomposition} $P=\Z(\FC)\times\foc(\FC)$ holds whenever $P$ is abelian. 

\item If $\FC=\FC_P(G)$, then $\foc(\FC)=\foc_G(P)$, $\hyp(\FC)=\hyp_G(P)$ and $\Z(\FC)\cong\Z(G/\pcore_{p'}(G))$ by the $\Z^*$-theorem.
\end{enumerate}
\end{Ex}

\begin{Thm}[\textsc{Díaz--Glesser--Park--Stancu}]
Let $\FC$ be a saturated fusion system on $P$. 
\begin{enumerate}[(i)]
\item If $\EC\subseteq\FC$ is a saturated subsystem \textup(subcategory\textup) on $P$, then $\foc(\FC)=\foc(\EC)\iff\hyp(\FC)=\hyp(\EC)$.
\item If $P$ has no quotient isomorphic to $C_p\wr C_p$, then $\foc(\FC)=\foc(\N_\FC(P))$. In particular, $\FC$ is trivial if and only if $\Aut_\FC(P)=\Inn(P)$.
\end{enumerate}
\end{Thm}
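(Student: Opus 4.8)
The common thread is the identity $\foc(\FC)=\hyp(\FC)P'$ recorded in \autoref{exfoc}, together with the evident monotonicity $\hyp(\EC)\le\hyp(\FC)$ and $\foc(\EC)\le\foc(\FC)$, which holds because $\EC\subseteq\FC$ has fewer morphisms. For part (i) the implication $\hyp(\FC)=\hyp(\EC)\Rightarrow\foc(\FC)=\foc(\EC)$ is then a one-liner: $\foc(\FC)=\hyp(\FC)P'=\hyp(\EC)P'=\foc(\EC)$. For the converse, $\foc(\FC)=\foc(\EC)$ gives $\hyp(\EC)P'=\hyp(\FC)P'$, so that $\hyp(\EC)\le\hyp(\FC)\le\hyp(\EC)P'$, and only the reverse inclusion $\hyp(\FC)\le\hyp(\EC)$ remains to be shown.

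\textbf{Part (i), the crux.}
This reverse inclusion is the heart of the statement. Passing to the quotient $\FC/\hyp(\FC)$ on $P/\hyp(\FC)$ is legitimate, since $\hyp(\FC)$ is strongly closed in $\FC$ and hence in $\EC$; one checks $\hyp\bigl(\FC/\hyp(\FC)\bigr)=1$, so $\FC/\hyp(\FC)$ is trivial by the characterisation ``$\hyp=1\iff$ every $\Aut_\FC(Q)$ is a $p$-group $\iff\FC$ is trivial,'' and so is its subsystem $\EC/\hyp(\FC)$. Unwinding this, however, only reproduces the inclusion $\hyp(\EC)\le\hyp(\FC)$ we already had. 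The genuine task is to show that $\EC$ already realises the coprime commutators $\alpha(x)x^{-1}$ with $\alpha\in\pcore^p(\Aut_\FC(Q))$ that generate $\hyp(\FC)$. My plan is to invoke the theory of subsystems of $p$-power index: $\hyp(\FC)$ supports the unique minimal such subsystem $\pcore^p(\FC)$, and I would argue that $\foc(\EC)=\foc(\FC)$ forces $\EC\supseteq\pcore^p(\FC)$, whence $\hyp(\EC)\ge\hyp\bigl(\pcore^p(\FC)\bigr)=\hyp(\FC)$. Establishing this containment is the main obstacle: elementary focal or Frattini manipulations do not reach it (the congruence modulo $\Phi(P)$ that $P'\le\Phi(P)$ makes available is too coarse to pin down $\hyp$), and in the source it is obtained from the transfer attached to the characteristic biset of $\FC$.

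\textbf{Part (ii).}
Here $\N_\FC(P)$ is a saturated subsystem on $\N_P(P)=P$ all of whose morphisms are restrictions of elements of $\Aut_\FC(P)$; hence $\foc(\N_\FC(P))=[P,\Aut_\FC(P)]$, and since each such commutator is a focal generator of $\FC$ we obtain ``$\le$'' for free. The content is the reverse inclusion $\foc(\FC)\le[P,\Aut_\FC(P)]=:N$. Writing $\bar P:=P/N$, which is abelian as $N\supseteq P'$, I must show $\overline{\phi(x)}=\bar x$ for every $\phi\in\Hom_\FC(\langle x\rangle,P)$, i.e. that $\FC$ induces no fusion on $\bar P$ beyond $\N_\FC(P)$. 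This is \textsc{Yoshida}'s transfer theorem transported to fusion systems: I would compute the transfer on $\bar P$ furnished by the characteristic biset of $\FC$ and use the hypothesis that $P$ has no quotient $C_p\wr C_p$ to force it to coincide with the transfer of $\N_\FC(P)$, killing all fusion outside the normaliser. Reducing instead to the classical group statement via \autoref{park} is not available, since that realisation need not make $P$ a Sylow subgroup, and for exotic $\FC$ no finite Sylow realisation exists at all. This biset transfer computation is the obstacle for (ii).

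\textbf{The ``in particular''.}
The final assertion combines both parts. As $\N_\FC(P)\subseteq\FC$ is a saturated subsystem on $P$, part (ii) yields $\foc(\FC)=\foc(\N_\FC(P))$, and part (i) applied with $\EC=\N_\FC(P)$ upgrades this to $\hyp(\FC)=\hyp(\N_\FC(P))$. If $\Aut_\FC(P)=\Inn(P)$, then $\N_\FC(P)=\FC_P(P)$ is trivial, so $\hyp(\N_\FC(P))=1$ and hence $\hyp(\FC)=1$, which forces $\FC$ to be trivial by the characterisation above; the converse is clear, since a trivial system has $\Aut_\FC(P)=\Aut_P(P)=\Inn(P)$.
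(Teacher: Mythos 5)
The paper states this theorem without proof (it is a survey item, attributed to the cited work of Díaz--Glesser--Park--Stancu), so there is no internal argument to measure you against; judged on its own terms, your proposal is an accurate road map rather than a proof. Everything you actually carry out is correct: the monotonicity $\foc(\EC)\le\foc(\FC)$ and $\hyp(\EC)\le\hyp(\FC)$ for a subsystem on the same $P$, the use of $\foc(\FC)=\hyp(\FC)P'$ from \autoref{exfoc} to dispose of the direction $\hyp(\FC)=\hyp(\EC)\Rightarrow\foc(\FC)=\foc(\EC)$, the identification $\foc(\N_\FC(P))=[P,\Aut_\FC(P)]$, and the derivation of the ``in particular'' from (i), (ii) and the Frobenius-type characterisation $\hyp(\FC)=1\iff\FC$ trivial. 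You also correctly diagnose that Frattini-style manipulations (working modulo $\Phi(P)\supseteq P'$) cannot recover $\hyp$ from $\foc$.

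However, the two statements you label ``the obstacle'' are the entire content of the theorem, and you leave both unproved. For (i) the missing step is $\foc(\FC)=\foc(\EC)\Rightarrow\hyp(\FC)\le\hyp(\EC)$; your plan --- show $\EC\supseteq\pcore^p(\FC)$ and invoke $\hyp(\pcore^p(\FC))=\hyp(\FC)$ --- presupposes the hyperfocal subgroup theorem for saturated fusion systems (existence and uniqueness of the minimal subsystem of $p$-power index on $\hyp(\FC)$), which is itself proved via a transfer computation with the characteristic biset; none of this is executed, and the containment $\EC\supseteq\pcore^p(\FC)$ is asserted rather than derived from $\foc(\EC)=\foc(\FC)$. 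For (ii) the missing step is $\foc(\FC)\le[P,\Aut_\FC(P)]$; you name the right tool (a Yoshida-type analysis of the biset transfer on the abelian quotient $P/[P,\Aut_\FC(P)]$) and correctly observe that a reduction to the classical group statement via \autoref{park} is unavailable, but the computation --- which is precisely where the hypothesis that $P$ has no quotient $C_p\wr C_p$ enters --- is absent. As written, the proposal establishes only the easy inclusions and the formal bookkeeping; completing it requires developing the transfer map for saturated fusion systems and proving the fusion-system analogues of the hyperfocal subgroup theorem and of Yoshida's theorem, which is the substance of the cited paper.
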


\begin{Thm}[\textsc{Díaz--Glesser--Mazza--Park}]
Let $\FC$ be a saturated fusion system on $P$ with $p\ge 5$. Then $\foc(\FC)=\foc(\N_\FC(K))$ where $K$ is the characteristic subgroup from \autoref{glauberman}.
\end{Thm}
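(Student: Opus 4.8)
The plan is to deduce the statement from the group-theoretic Glauberman transfer theorem (\autoref{glauberman}) by localising it inside $\FC$. The first move is a reduction from focal to hyperfocal subgroups. Since $K$ is characteristic in $P$ it is normal, hence $\FC$-normalized, so by Puig's theorem $\N_\FC(K)$ is a saturated subsystem, and because $\N_P(K)=P$ it is a subsystem on $P$ itself. We may therefore apply the theorem of Díaz--Glesser--Park--Stancu with $\EC=\N_\FC(K)\subseteq\FC$: it suffices to prove $\hyp(\FC)=\hyp(\N_\FC(K))$ in place of the focal equality. One inclusion is free, since $\N_\FC(K)$ is a subcategory of $\FC$ on the same $p$-group, so every generator $\phi(x)x^{-1}$ of $\hyp(\N_\FC(K))$ is also a generator of $\hyp(\FC)$; the whole content lies in $\hyp(\FC)\le\hyp(\N_\FC(K))$.

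To control the generators of $\hyp(\FC)$ I would first cut down their number. By Alperin's fusion theorem for saturated fusion systems (a standard structural result not reproduced above), $\FC$ is generated by $\Aut_\FC(P)$ together with $\Aut_\FC(Q)$ for fully normalized representatives $Q$ of the $\FC$-essential classes; passing to $p'$-parts this yields
\[\hyp(\FC)=\bigl\langle [P,\pcore^p(\Aut_\FC(P))],\ [Q,\pcore^p(\Aut_\FC(Q))]:Q\ \FC\text{-essential}\bigr\rangle.\]
The contribution of $Q=P$ causes no trouble: every $\phi\in\Aut_\FC(P)$ fixes the characteristic subgroup $K$ setwise, whence $\Aut_\FC(P)=\Aut_{\N_\FC(K)}(P)$ and $[P,\pcore^p(\Aut_\FC(P))]\le\hyp(\N_\FC(K))$. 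It remains to show, for each $\FC$-essential $Q$, that $[Q,\pcore^p(\Aut_\FC(Q))]\le\hyp(\N_\FC(K))$.

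Here I would pass to a finite group. With $Q$ fully normalized, the normalizer system $\N_\FC(Q)$ is saturated and constrained (as $Q$ is $\FC$-centric), so by the model theorem it is realized by a finite group $M$ with $\N_P(Q)\in\Syl_p(M)$, $Q\unlhd M$, $\C_M(Q)\le Q$ and $\Aut_\FC(Q)=\Aut_M(Q)$. Since $\C_M(Q)$ is a normal $p$-subgroup, the $p'$-automorphisms in $\pcore^p(\Aut_\FC(Q))$ lift to $p'$-elements of $M$, so the generators $[Q,\pcore^p(\Aut_\FC(Q))]$ all lie in $\hyp_M(\N_P(Q))=\pcore^p(M)\cap\N_P(Q)$. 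Now \autoref{glauberman}, applied to the finite group $M$ and its Sylow $p$-subgroup $\N_P(Q)$, gives the local transfer control $\hyp_M(\N_P(Q))=\hyp_{\N_M(K_M)}(\N_P(Q))$, where $K_M=K(\N_P(Q))$ is Glauberman's characteristic subgroup of $\N_P(Q)$; translated back into fusion systems this places the commutators inside the subsystem of $\N_\FC(Q)$ normalizing $K_M$.

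The main obstacle is the mismatch between the local and the global characteristic subgroup: \autoref{glauberman} delivers control by $K(\N_P(Q))$, whereas we need control by the global $K=K(P)$ through $\N_\FC(K)$. Bridging this gap is the technical heart of the argument and rests on the special nature of Glauberman's functor $K(-)$ for $p\ge5$. For $p\ge7$ the concrete choice $K=\J(P)$ and the controlled behaviour of the Thompson subgroup under taking normalizers make the comparison of $K(\N_P(Q))$ with $\J(P)\cap\N_P(Q)$ tractable, while for $p=5$ one must instead exploit the abstract characteristic-in-characteristic and weak-closure properties that $K$ is designed to enjoy. In contrast to the Kessar--Linckelmann $\ZJ$-theorem quoted above, no $Qd(p)$-freeness is assumed: the $Qd(p)$-sections obstruct control of \emph{fusion} but not of \emph{transfer}, and it is exactly the reduction from the focal to the hyperfocal subgroup in the first step that lets the argument run for every saturated system with $p\ge5$.
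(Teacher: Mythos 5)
The paper itself offers no proof of this theorem; it is quoted from D\'iaz--Glesser--Mazza--Park, so I can only judge your argument on its own terms. The outer scaffolding is sound and does match the shape of the published argument: $K\unlhd P$ is fully normalized, so $\N_\FC(K)$ is a saturated subsystem on $P$ and the D\'iaz--Glesser--Park--Stancu equivalence legitimately trades the focal equality for the hyperfocal one; the inclusion $\hyp(\N_\FC(K))\le\hyp(\FC)$ is indeed free; the generation $\hyp(\FC)=\langle [Q,\pcore^p(\Aut_\FC(Q))]:Q\in\EC\cup\{P\}\rangle$ follows from Alperin--Goldschmidt (with a small argument you suppress, since a $p'$-automorphism of an arbitrary subgroup decomposes into factors that are not individually $p'$-elements); and the $Q=P$ contribution is handled correctly because $K$ is characteristic. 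Passing to the model $M$ of the constrained system $\N_\FC(Q)$ for essential $Q$ is also the right move.

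But the proof stops exactly where the theorem begins. Applying \autoref{glauberman} to $M$ produces \emph{some} characteristic subgroup $K_M$ of $\N_P(Q)$ with $\N_M(K_M)$ controlling transfer in $\N_P(Q)$; nothing in that statement relates $K_M$ to $K=K(P)$, and a morphism of $\N_\FC(Q)$ normalizing $K_M$ has no reason to extend to, or be a restriction of, a morphism normalizing $K$. Your final paragraph names this as ``the technical heart'' and then gestures at the special nature of Glauberman's functor without supplying an argument, so the containment $[Q,\pcore^p(\Aut_\FC(Q))]\le\hyp(\N_\FC(K))$ is never established. The gap is not cosmetic: \autoref{glauberman} as stated is a pure existence theorem, and no induction over local subsystems can be closed from it as a black box. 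What the published proof actually uses is that for $p\ge 5$ Glauberman's subgroups $K^\infty(P)$, $K_\infty(P)$ are values of a \emph{section conjugacy functor} with a control-of-weak-closure property, and it is precisely this built-in compatibility between $K(\N_P(Q))$ and $K(P)$ (a deep group-theoretic input from Glauberman's work, not recoverable from the existence statement) that lets the local commutators be rewritten inside $\N_\FC(K)$. There is also a secondary imprecision in the same paragraph: control of transfer for $M$ gives an equality of the subgroups $\hyp_M(\N_P(Q))=\hyp_{\N_M(K_M)}(\N_P(Q))$, which by itself does not exhibit your specific generators $[Q,\pcore^p(\Aut_M(Q))]$ as products of commutators coming from morphisms that normalize $K_M$; one must argue at the level of generators. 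So the proposal is a reasonable strategy outline, but the decisive step is missing.
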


Kessar--Linckelmann and Onofrei--Stancu have translated Theorems~\ref{ZJ} and \ref{Stellmacher} to fusion systems, but this requires the definition of $Qd(p)$-\emph{free} fusion systems.

\section{Classification of fusion systems}

Let $\FC$ be a saturated fusion system on a finite $p$-group $P$. Let $\Out_\FC(Q):=\Aut_\FC(Q)/\Inn(Q)$ for $Q\le P$.

\begin{Thm}[\textsc{Glauberman--Thompson}]
If $\foc(\FC)=P\ne 1$ and $p\ge 5$, then $\Out_\FC(P)\ne 1$.
\end{Thm}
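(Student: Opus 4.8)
The plan is to prove the contrapositive: assuming $\Out_\FC(P)=1$, i.e.\ $\Aut_\FC(P)=\Inn(P)$, I show that $\foc(\FC)<P$. By \autoref{exfoc} we have $\foc(\FC)=\hyp(\FC)P'$ with $P'\le\Phi(P)$, so since $P\ne1$ gives $\Phi(P)<P$, it suffices to produce the single inequality $\foc(\FC)\ne P$. I would then argue by induction on the number of morphisms of $\FC$, using the Díaz--Glesser--Mazza--Park theorem as the engine of the reduction. Throughout, $K$ denotes Glauberman's characteristic subgroup of $P$ from \autoref{glauberman} (available since $p\ge5$), with $\Z(P)\le K$.

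For the inductive step, the Díaz--Glesser--Mazza--Park theorem supplies the identity $\foc(\FC)=\foc(\N_\FC(K))$. Because $K$ is characteristic, every automorphism in $\Aut_\FC(P)$ stabilises $K$, so $\Aut_{\N_\FC(K)}(P)=\Aut_\FC(P)=\Inn(P)$; thus passing to $\N_\FC(K)$ preserves the triviality of $\Out(P)$. Since $K$ is characteristic it is $\FC$-normalized, so $\N_\FC(K)$ is a saturated fusion system on $\N_P(K)=P$. If $K$ is \emph{not} normal in $\FC$, then $\N_\FC(K)$ is a proper subcategory, hence a strictly smaller saturated system to which the inductive hypothesis applies; it satisfies $\Out_{\N_\FC(K)}(P)=1$, so $\foc(\N_\FC(K))<P$ and therefore $\foc(\FC)<P$. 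This disposes of every case except the one in which $K$ is normal in $\FC$.

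The base case is thus $K\unlhd\FC$, which I would treat by producing a finite model. Take $p\ge7$ and $K=\J(P)$ (the prime $p=5$ requiring Glauberman's specific subgroup, handled the same way). Then $K$ is $\FC$-centric: a maximal-order abelian subgroup $A\le P$ is self-centralising, so $\C_P(\J(P))\le\C_P(A)=A\le\J(P)$. A normal, centric subgroup makes $\FC$ constrained, so by the model theorem for constrained fusion systems there is a finite group $L$ with $P\in\Syl_p(L)$, $\pcore_{p'}(L)=1$, $\C_L(\pcore_p(L))\le\pcore_p(L)$ and $\FC=\FC_P(L)$. By \autoref{exfoc} we then have $\foc(\FC)=\foc_L(P)=L'\cap P$, while $\Aut_L(P)=\Inn(P)$. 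Since $\pcore_p(L)\le P$ forces $\C_L(P)\le\C_L(\pcore_p(L))\le\pcore_p(L)\le P$, the equality $\Aut_L(P)=\Inn(P)$ gives $\N_L(P)=P\C_L(P)=P$, and moreover $\J(P)=\J(\pcore_p(L))\unlhd L$.

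The remaining assertion---that a finite group $L$ with $p\ge5$, $P\in\Syl_p(L)$, $\N_L(P)=P$ and $\J(P)\unlhd L$ cannot satisfy $P\le L'$---is the analytic heart of the statement, and this is the step I expect to be the main obstacle. It is precisely (a model form of) the classical Glauberman--Thompson normal $p$-complement theorem for odd primes, whose only known proof rests on Glauberman's intricate $\ZJ$/failure-of-factorisation analysis rather than on any of the formal reductions above. Granting it, $L$ is $p$-nilpotent, so $\hyp_L(P)=1$ by \autoref{Tatepnil} and $\foc(\FC)=\foc_L(P)=P'<P$, contradicting $\foc(\FC)=P$ and completing the induction. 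In short, the entire fusion-theoretic part is bookkeeping that transports a general saturated system to a constrained model, where the genuine difficulty---the deep group-theoretic theorem valid for $p\ge5$ but false at $p=2$---resides.
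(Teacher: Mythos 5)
The paper itself states this theorem without proof (it is a survey), so your proposal has to stand on its own. Your overall architecture is sound and matches how such results are actually established in the literature: take the contrapositive, use Díaz--Glesser--Mazza--Park to replace $\FC$ by $\N_\FC(K)$ without changing $\foc$ or $\Aut(P)$, induct until $K\unlhd\FC$, and then exploit constrainedness. The problem is the last step. You assert that the remaining claim --- $L$ finite, $p\ge 5$, $P\in\Syl_p(L)$, $\N_L(P)=P$, $\J(P)\unlhd L$ implies $P\not\le L'$ --- ``is precisely (a model form of) the classical Glauberman--Thompson normal $p$-complement theorem.'' It is not. That theorem compares $L$ with $\N_L(\Z(\J(P)))$, and in your base case $\J(P)\unlhd L$ forces $\N_L(\Z(\J(P)))=L$, so the theorem says only ``$L$ is $p$-nilpotent iff $L$ is $p$-nilpotent.'' Likewise Thompson's criterion (\autoref{Tatepnil} is not it either; I mean the paper's Thompson theorem) reduces to asking whether $L/\C_L(\J(P))$ is a $p$-group, which does not follow from $\N_L(P)=P$. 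What you actually need is the \emph{group-theoretic transfer form} of the very statement being proven: $p\ge 5$, $P\ne 1$, $\N_L(P)=P\C_L(P)$ implies $\foc_L(P)<P$. That is a true classical result, but it is not the normal $p$-complement theorem, and deriving it requires its own induction: apply \autoref{glauberman} to pass to $\N_L(K)$ when that is proper, and when $K\unlhd L$ pass to $L/K$ (legitimate since $1\ne\Z(P)\le K$) and recurse. As written, your citation does not deliver the base case, and this is exactly the point where the whole weight of the argument sits.

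Two further remarks. First, you can avoid the model theorem (and hence the centricity issue) entirely by staying inside fusion systems: once $K\unlhd\FC$, form the quotient system $\FC/K$ on $P/K$ as defined in the paper. One checks that $\Aut_{\FC/K}(P/K)$ is the image of $\Aut_\FC(P)=\Inn(P)$, hence $\Out_{\FC/K}(P/K)=1$, and that $\foc(\FC)K/K\le\foc(\FC/K)$; induction on $|P|$ (valid since $K\supseteq\Z(P)\ne 1$) then gives $\foc(\FC/K)<P/K$ unless $P=K$, in which case $P\unlhd\FC$ together with $\Aut_\FC(P)=\Inn(P)$ forces $\FC=\FC_P(P)$ and $\foc(\FC)=P'<P$. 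Second, your treatment of $p=5$ is a genuine loose end: the paper only guarantees $\Z(P)\le K$, which does not imply $\C_P(K)\le K$, so constrainedness of $\FC$ (needed for the model theorem) is not justified for Glauberman's subgroup at $p=5$; your nice argument that $\J(P)$ is self-centralizing covers only $p\ge 7$.
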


\begin{Def}
A subgroup $Q\le P$ is called $\FC$-\emph{essential} if
\begin{itemize}
\item $\C_P(Q)\le Q$,
\item $Q$ is $\FC$-normalized,
\item there exists a \emph{strongly $p$-embedded} subgroup $H<\Out_\FC(Q)$, i.\,e. $p\bigm||H|$ and $p\nmid |H\cap {}^xH|$ for every $x\in\Out_\FC(Q)\setminus H$ (cf. Frobenius complement).\footnote{A finite group $L$ contains a strongly $p$-embedded subgroup if and only if the graph with vertex set $\Syl_p(L)$ and edges $(S,T)\iff S\cap T\ne 1$ is disconnected.}
\end{itemize}
\end{Def}

\begin{Ex}\label{exessentail}\hfill
\begin{enumerate}[(i)]
\item\label{eqrad} Every $\FC$-essential subgroup $Q\le P$ is $\FC$-\emph{radical}, i.\,e. $\pcore_p(\Aut_\FC(Q))=\Inn(Q)$. To prove this, let $H<U:=\Out_\FC(Q)$ be strongly $p$-embedded. Let $H_p\le U_p$ be Sylow $p$-subgroups of $H$ and $U$ respectively. For $x\in\N_{U_p}(H_p)$, we have $1\ne H_p\le H\cap {}^xH$ and therefore $x\in\N_{H_p}(H_p)=H_p$. Hence, $\N_{U_p}(H_p)=H_p$ and $H_p=U_p$ by standard group theory. It follows that $\pcore_p(U)=\bigcap_{x\in U}{}^xU_p\le H\cap{}^uH$ for any $u\in U\setminus H$. So $\pcore_p(U)=1$.

\item Part \eqref{eqrad} shows that every essential subgroup $Q$ has non-trivial $p'$-automorphisms and $\Out_\FC(Q)$ acts faithfully on $Q/\Phi(Q)\cong C_p^r$. Therefore, $\Out_\FC(Q)\le\GL(r,p)$.

\item Since $P$ is $\FC$-automized, $\Out_\FC(P)$ is a $p'$-group and $P$ is not essential.
\item If $P$ is abelian, then there are no essential subgroups, since $P$ is the only self-centralizing subgroup.
\item Let $G=S_4$, $P\in\Syl_2(G)$ and $\FC=\FC_P(G)$. Then $V_4:=\langle(12)(34),(13)(24)\rangle\le P$ is $\FC$-essential since $\Out_\FC(V_4)=G/V_4\cong S_3$ contains the strongly $2$-embedded subgroup $P/V_4\cong C_2$. On the other hand, $Q:=\langle(12),(34)\rangle\cong V_4$ is not $\FC$-essential (provided $Q\le P$).
\end{enumerate}
\end{Ex}

\begin{Thm}[\textsc{Alperin--Goldschmidt}'s fusion theorem]\label{AFT}
Let $\EC$ be a set of representatives for the $\FC$-conjugacy classes of essential subgroups.
Every isomorphism in $\FC$ is a composition of isomorphisms of the form $\phi\colon S\to T$ with the following properties:
\begin{enumerate}[(i)]
\item $S,T\le Q\in\EC\cup\{P\}$.
\item $\exists\psi\in\Aut_\FC(Q)$ such that $\psi_S=\phi$,
\item If $Q\in\EC$, then $\psi$ is a $p$-element.
\end{enumerate}
\end{Thm}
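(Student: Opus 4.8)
The plan is to argue by induction on the index $[P:S]$, treating the largest subgroups first, so that the base case is $S=P$, where $\Aut_\FC(P)$ is itself a permitted factor (condition (i) with $Q=P$). Call a morphism \emph{decomposable} if it is a composite of the form described; since the permitted factors are closed under inversion and under restricting the domain (restricting $\psi|_S$ to $S'\le S$ yields $\psi|_{S'}$, and for essential $Q$ a $p$-element $\psi$ has $p$-element inverse), the class of decomposable morphisms is closed under the same operations. The first reduction is from an arbitrary isomorphism $\phi\colon S\to T$ to an automorphism of an $\FC$-normalized subgroup. Pick an $\FC$-normalized $R$ that is $\FC$-conjugate to $S$ (hence to $T$). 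As $R$ is $\FC$-normalized in a saturated system it is $\FC$-automized and $\FC$-receptive, so the standard extension argument applies: given an isomorphism $S\to R$, adjust it by an element of $\Aut_\FC(R)$ so that it carries $\Aut_P(S)$ into the Sylow subgroup $\Aut_P(R)$, whence $\N_P(S)\le N_\rho$ and receptivity extends it to $\N_P(S)$. This yields $\rho\colon S\to R$ and $\sigma\colon T\to R$ extending to $\N_P(S)$ and $\N_P(T)$. Writing $\phi=\sigma^{-1}\circ(\sigma\phi\rho^{-1})\circ\rho$ with $\sigma\phi\rho^{-1}\in\Aut_\FC(R)$, and using that $\N_P(S),\N_P(T)$ have strictly smaller index (as $S,T\ne P$), the factors $\rho,\sigma$ are restrictions of decomposable maps, hence decomposable. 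It remains to decompose each $\alpha\in\Aut_\FC(R)$ for $R$ an $\FC$-normalized proper subgroup.

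The main tool here is the set $N_\alpha$. From the definition $\C_P(R)\le N_\alpha$ always (central inner automorphisms restrict trivially to $R$), and $\N_P(R)\le N_\alpha$ precisely when $\alpha$ normalizes $\Aut_P(R)$; since $R$ is receptive, any $\alpha$ with $R<N_\alpha$ extends to a strictly larger subgroup and is decomposable by induction. Now set $G:=\Aut_\FC(R)$. Its Sylow $p$-subgroup is $\Aut_P(R)$ ($R$ being $\FC$-automized), and $\Aut_P(R)\le\pcore^{p'}(G)$, the normal subgroup generated by all $p$-elements of $G$, whose index is prime to $p$. A Frattini argument gives $G=\pcore^{p'}(G)\,\N_G(\Aut_P(R))$. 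Each $\gamma\in\N_G(\Aut_P(R))$ has $\N_P(R)\le N_\gamma$ and hence is decomposable by induction, so the task is reduced to the $p$-elements generating $\pcore^{p'}(G)$.

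It therefore suffices to decompose a single $p$-element $\pi\in\Aut_\FC(R)$. If $R$ is $\FC$-essential, then $\pi$ is itself a permitted factor by (ii)--(iii) (taking $Q=R$, after conjugating $R$ to its representative in $\EC$ via an isomorphism that, by the extension argument, is decomposable; this is legitimate since essential subgroups are $\FC$-normalized). If $R$ is not essential I split further. If $\C_P(R)\not\le R$, then $R<R\C_P(R)\le N_\pi$, so $\pi$ extends and is decomposable by induction. Otherwise $\C_P(R)\le R$; then $\Out_P(R):=\Aut_P(R)/\Inn(R)\in\Syl_p(\Out_\FC(R))$, and because the kernel $\C_P(R)$ of $\N_P(R)\to\Aut_P(R)$ lies in $R$, every nontrivial $p$-subgroup $\bar Q\le\Out_P(R)$ has the form $\Aut_U(R)/\Inn(R)$ for a unique $R<U\le\N_P(R)$. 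Failure of essentiality means $\Out_\FC(R)$ has no strongly $p$-embedded subgroup, so it is generated by the normalizers $\N_{\Out_\FC(R)}(\bar Q)$ of its nontrivial $p$-subgroups. Any $\delta$ in the preimage of such a normalizer fixes $\Aut_U(R)$, whence $U\le N_\delta$ and $\delta$ extends to $U>R$. Thus $\Aut_\FC(R)$ is generated, modulo $\Inn(R)$, by automorphisms extending to proper overgroups of $R$, all decomposable by induction; this closes the induction.

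The hardest part is the final paragraph: the group-theoretic input that a finite group with no strongly $p$-embedded subgroup is generated by the normalizers of its nontrivial $p$-subgroups (equivalently the connectedness statement in the footnote to the definition of essential subgroups), together with the bookkeeping that matches each such normalizer to a genuine extension of morphisms to an intermediate $R<U\le\N_P(R)$ through receptivity. This is exactly where $\FC$-essential subgroups arise as the unavoidable exceptions: they are the $\FC$-normalized, self-centralizing subgroups whose automorphism groups cannot be reconstructed from those of larger subgroups, so that one must keep the $p$-elements of $\Aut_\FC(R)$ among the generators, which is why condition (iii) cannot be dropped.
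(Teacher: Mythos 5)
The paper states Alperin--Goldschmidt's fusion theorem without proof, so there is no in-text argument to compare against; your write-up is the standard proof (downward induction on $|S|$, reduction via receptivity of a fully normalized representative to automorphism groups, a Frattini argument to isolate the $p$-elements, and the fact that a finite group of order divisible by $p$ with no strongly $p$-embedded subgroup is generated by the normalizers of the non-trivial subgroups of a fixed Sylow $p$-subgroup), and I find it correct. The only step worth making explicit is that in the last case ($R<P$, $\C_P(R)\le R$, $R$ not essential) the generation statement is not vacuous because $\Out_P(R)\cong\N_P(R)/R\ne 1$; beyond that, the Frattini reduction is only really needed in the essential case, since the generation-by-normalizers argument already decomposes all of $\Aut_\FC(R)$ in the non-essential one.
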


The number $|\EC|$ in \autoref{AFT} is called the \emph{essential rank} of $\FC$. 

\begin{Thm}\label{bender}
A group $G$ contains a strongly $p$-embedded subgroup if and only if one of the following holds:
\begin{enumerate}[(1)]
\item\label{generic} $\pcore_p(G)=1$ and the Sylow $p$-subgroups of $G$ are (non-trivial) cyclic or (generalized) quaternion $2$-groups.
\item\label{simple} $\pcore^{p'}\bigl(G/\pcore_{p'}(G)\bigr)$ is one of the following:
\begin{itemize}
\item $\PSL(2,p^n)$ for $n\ge 2$,
\item $\PSU(3,p^n)$ for $n\ge 1$ and $(p,n)\ne(2,1)$,
\item $\Sz(2^{2n+1})$ for $p=2$ and $n\ge 1$,
\item $^2G_2(3^{2n-1})$ for $p=3$ and $n\ge 1$,
\item $A_{2p}$ for $p\ge 5$,
\item $\PSL_3(4)$, $M_{11}$ for $p=3$,
\item $\Aut(\Sz(32))$, $^2F_4(2)'$, $McL$, $Fi_{22}$ for $p=5$,
\item $J_4$ for $p=11$.
\end{itemize}
\end{enumerate}
\end{Thm}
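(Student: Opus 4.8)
The plan is to recognize Theorem~\ref{bender} as the Bender--Suzuki classification of groups with a strongly $p$-embedded subgroup, which for $p>2$ is genuinely a consequence of CFSG (Bender settled $p=2$ by the so-called Bender method). I would organize everything around the graph $\Gamma$ from the footnote, with vertex set $\Syl_p(G)$ and an edge between $S,T$ whenever $S\cap T\ne 1$; strong $p$-embedding is exactly the assertion that $\Gamma$ is disconnected, and a strongly $p$-embedded subgroup then arises as the stabilizer in $G$ of a connected component of $\Gamma$.

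First I would carry out the standard reductions. If $\pcore_p(G)\ne 1$, then every Sylow $p$-subgroup contains the nontrivial group $\pcore_p(G)$, so any two of them meet nontrivially and $\Gamma$ is complete, hence connected; thus a strongly $p$-embedded subgroup forces $\pcore_p(G)=1$. Moreover a strongly $p$-embedded subgroup of $G$ projects to one of $G/\pcore_{p'}(G)$ and conversely, so I may assume $\pcore_{p'}(G)=\pcore_p(G)=1$, i.e. $\F(G)=1$ and therefore $\F^*(G)=\E(G)$ is a central product of quasi-simple components. The analysis now splits according to the $p$-rank of a Sylow $p$-subgroup.

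If the Sylow $p$-subgroups have $p$-rank $1$ — equivalently they are cyclic or quaternion — I land in case~\eqref{generic}, and here existence is entirely classical. A Sylow $p$-subgroup $P$ then has a \emph{unique} subgroup $Z$ of order $p$, which is weakly closed in $P$, and a short argument shows that $\N_G(Z)$ is strongly $p$-embedded: any nontrivial $p$-element of $\N_G(Z)\cap\N_G(Z)^x$ acts on $Z$ through the $p'$-group $\Aut(Z)$, so it centralizes both $Z$ and $Z^x$, and since a cyclic $p$-group has a unique subgroup of order $p$ this forces $Z=Z^x$, whence $x\in\N_G(Z)$. If instead the $p$-rank is at least $2$, I would prove that $\E(G)$ must be a single quasi-simple, in fact simple, component $L$ with $L=\pcore^{p'}(G)$ and $G$ almost simple: two components of order divisible by $p$, or a non-simple component, would contribute commuting, independent $p$-structure that reconnects $\Gamma$. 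This structural reduction to the almost simple case is the genuinely delicate group-theoretic step.

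The final and hardest step is the identification. For the forward direction I would, for each candidate $L$ in the list, exhibit a concrete strongly $p$-embedded subgroup — the Borel subgroup in $\PSL(2,p^n)$ and $\PSU(3,p^n)$, the corresponding parabolic-type maximal subgroups in the Suzuki and Ree groups, a point stabilizer in $A_{2p}$, and known maximal subgroups in the sporadic and small exceptional cases — and verify the intersection condition directly. The converse, that no other simple group qualifies, is precisely where CFSG enters: one must run through the alternating, classical, exceptional and sporadic families, analyze their $p$-local subgroup structure, and confirm that only the listed groups exhibit the required trivial-intersection Sylow behaviour. I expect this exhaustive case analysis over all finite simple groups, rather than any single conceptual lemma, to be the main obstacle.
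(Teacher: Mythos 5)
Your proposal is consistent with the paper, whose own ``proof'' of this theorem is nothing more than the attribution to Bender for $p=2$ and to the CFSG for $p>2$; like the paper, you correctly locate the irreducible hard core in the exhaustive check over all finite simple groups. The elementary steps you do spell out --- disconnectedness of the Sylow intersection graph forcing $\pcore_p(G)=1$, the rank-one case \eqref{generic} via $\N_G(\Omega(P))$ for the unique subgroup $\Omega(P)$ of order $p$, and the trivial-intersection verification for the groups listed in \eqref{simple} --- are correct and essentially coincide with the remarks the paper itself makes in the example immediately following the theorem.
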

\begin{proof}
The proof of $p=2$ is due to Bender, while the case $p>2$ was established during the CFSG.
\end{proof}

\begin{Ex}\hfill
\begin{enumerate}[(i)]
\item In the situation of \autoref{bender}\eqref{generic}, every $P\in\Syl_p(G)$ has a unique subgroup $\Omega(P)$ of order $p$. It is easy to see that $\N_G(\Omega(P))$ is strongly $p$-embedded in $G$. 

\item The groups in \autoref{bender}\eqref{simple} apart from $A_{2p}$, $^2G_2(3)\cong\PSL(2,8).3$ and $\Aut(\Sz(32))\cong\Sz(32).5$ are precisely the simple groups $G$ with a non-cyclic \emph{trivial intersection} (TI) Sylow $p$-subgroup $P$, i.\,e. $P\cap {}^gP=1$ for all $g\in G\setminus\N_G(P)$. Thus, $\N_G(P)$ is strongly $p$-embedded in this case.

\item Let $p\ge 5$ and $G=A_{2p}$. Then $H:=G\cap (S_p\wr C_2)$ is strongly $p$-embedded in $G$.
\end{enumerate}
\end{Ex}

\begin{Cor}
Let $Q\le P$ be $\FC$-essential with $p\ge 5$. Then one of the following holds for $N:=\N_P(Q)/Q$:
\begin{enumerate}[(1)]
\item $N$ is cyclic, elementary abelian or $5^{1+2}_-$ (extraspecial of exponent $25$).
\item $\exp(N)=p$ and $\Z(N)=N'=\Phi(N)\cong C_p^n$ where $|N|=p^{3n}$ \textup(i.\,e. $N$ is \emph{special}\textup).
\end{enumerate}
\end{Cor}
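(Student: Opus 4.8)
The plan is to reduce everything to a statement about Sylow $p$-subgroups of the groups in \autoref{bender}, applied to $U:=\Out_\FC(Q)$. First I would unpack what "essential'' gives. Since $\C_P(Q)\le Q$ we have $\C_P(Q)=\Z(Q)$, so $\Inn(Q)=Q/\C_P(Q)$ and $\Aut_P(Q)=\N_P(Q)/\C_P(Q)$, whence
\[
N=\N_P(Q)/Q\cong\Aut_P(Q)/\Inn(Q)=\Out_P(Q).
\]
As $Q$ is $\FC$-normalized and $\FC$ is saturated, $Q$ is $\FC$-automized, i.e. $\Aut_P(Q)\in\Syl_p(\Aut_\FC(Q))$; dividing out the normal $p$-subgroup $\Inn(Q)\le\Aut_P(Q)$ then yields $N\cong\Out_P(Q)\in\Syl_p(U)$. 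Moreover every essential subgroup is radical (\autoref{exessentail}\eqref{eqrad}), so $\pcore_p(U)=1$, and by definition $U$ has a strongly $p$-embedded subgroup. Thus $N$ is a Sylow $p$-subgroup of a group $U$ meeting the hypotheses of \autoref{bender}.

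Next I would feed $U$ into \autoref{bender}. In case \eqref{generic} the Sylow $p$-subgroups are cyclic or quaternion; since $p\ge 5$ the quaternion option is impossible, so $N$ is cyclic and we are in conclusion (1). In case \eqref{simple} set $\bar U:=U/\pcore_{p'}(U)$ and $S:=\pcore^{p'}(\bar U)$. Since $\pcore_{p'}(U)$ is a $p'$-group, $N$ is isomorphic to a Sylow $p$-subgroup of $\bar U$; and since $\bar U/S$ is a $p'$-group (that is what the $p'$-residue means), a Sylow $p$-subgroup of $S$ is already one of $\bar U$. Hence $N$ is isomorphic to a Sylow $p$-subgroup of $S$, and it remains to compute $\Syl_p(S)$ for each $S$ occurring for $p\ge 5$.

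Here the generic families behave exactly as predicted. For $\PSL(2,p^n)$ with $n\ge 2$ the Sylow $p$-subgroup is the additive group $\FF_{p^n}^+\cong C_p^n$ (elementary abelian, conclusion (1)); for $A_{2p}$ it is $C_p\times C_p$ (conclusion (1)); and for $\PSU(3,p^n)$ it is the unipotent radical, a special group of order $p^{3n}$ and exponent $p$ with $\Z(N)=N'=\Phi(N)\cong C_p^n$, which is precisely conclusion (2) and clearly the model for it. Among the finitely many exceptional $S$, those with $|S|_p=p^2$ (namely ${}^2F_4(2)'$ and $Fi_{22}$ at $p=5$) land in (1) automatically, a group of order $p^2$ being cyclic or elementary abelian; and the order-$p^3$ cases from $McL$ ($p=5$) and $J_4$ ($p=11$) have extraspecial Sylow subgroup of exponent $p$, giving conclusion (2) with $n=1$.

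The hard part will be the remaining exceptional case $S=\Aut(\Sz(32))$ at $p=5$. Its Sylow $5$-subgroup has order $5^3$ and contains the cyclic Sylow $5$-subgroup $C_{25}$ of $\Sz(32)$ as a normal subgroup, while the field automorphism of order $5$ acts nontrivially on it (its fixed points in $\Sz(32)$ form $\Sz(2)$, with Sylow $5$-subgroup of order only $5$). A short computation identifies $N$ as the extraspecial group $5^{1+2}_{-}$, which is special but of exponent $p^2$ rather than $p$. Reconciling this with the exponent clause of conclusion (2) is the crux: one must either invoke the faithful linear action $U\le\GL(r,p)$ of \autoref{exessentail} to exclude this configuration for an essential $Q$, or admit $\Aut(\Sz(32))$ as a genuine extra case. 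I expect this to be the only real obstacle, everything else being the reduction above together with the known Sylow data of the groups in \autoref{bender}.
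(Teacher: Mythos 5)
Your reduction is exactly the intended argument: the Corollary is stated in the paper without proof, as an immediate consequence of \autoref{bender}, and your chain $N\cong\Out_P(Q)\in\Syl_p(\Out_\FC(Q))$ (using $\C_P(Q)=\Z(Q)$, the fact that an $\FC$-normalized subgroup of a saturated system is $\FC$-automized, and \autoref{exessentail}\eqref{eqrad} to get $\pcore_p(\Out_\FC(Q))=1$), followed by the passage to $\pcore^{p'}(U/\pcore_{p'}(U))$ and the case-by-case Sylow computations for $\PSL(2,p^n)$, $\PSU(3,p^n)$, $A_{2p}$, ${}^2F_4(2)'$, $Fi_{22}$, $McL$ and $J_4$, is correct throughout.

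The case you flag is, however, a genuine defect of the statement and cannot be argued away. Your computation of the Sylow $5$-subgroup of $\Aut(\Sz(32))$ is right: it is the nonabelian group $C_{25}\rtimes C_5$ of order $125$ and exponent $25$ (a field automorphism $\sigma$ cannot centralize a cyclic group of order $25$ inside $\Sz(32)$, since $\C_{\Sz(32)}(\sigma)\cong\Sz(2)$ has Sylow $5$-subgroup of order only $5$). This group is special in the sense of conclusion (2) --- indeed $\Z(N)=N'=\Phi(N)\cong C_5$ and $|N|=5^3$ --- but it violates the clause $\exp(N)=p$. Your proposed escape via the faithful embedding $\Out_\FC(Q)\le\GL(r,p)$ cannot succeed: take any faithful $\FF_5$-module $V$ for $\Gamma:=\Aut(\Sz(32))$ and set $G:=V\rtimes\Gamma$ with $P\in\Syl_5(G)$. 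Then $\C_G(V)=V$ by faithfulness, $V\unlhd G$ is $\FC_P(G)$-normalized, and $\Out_{\FC_P(G)}(V)\cong\Gamma$ has a strongly $5$-embedded subgroup by \autoref{bender}, so $V$ is essential with $\N_P(V)/V\cong C_{25}\rtimes C_5$. Hence the statement must be repaired, either by deleting the exponent condition from (2) or by adding the case "$p=5$ and $N\cong C_{25}\rtimes C_5$"; the complete classifications of these quotients in the literature do list this as a separate possibility. Apart from settling this point --- which is a flaw of the Corollary rather than of your argument --- your proof is complete.
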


Alperin--Goldschmidt's fusion theorem and \autoref{bender} make it feasible to determine all saturated fusion systems on a given $p$-group. Parker--Semeraro have developed a MAGMA algorithm for this purpose and discovered fusion systems overlooked in previous work.\footnote{\url{https://github.com/chris1961parker/fusion-systems}}
Since “most” $p$-groups do not have non-trivial $p'$-automorphisms, there are very few essential subgroups and “most” fusion systems are trivial.

\begin{Def}\hfill
\begin{itemize}
\item We call $\FC$ \emph{controlled} if there are no essential subgroups.
\item We call $P$ \emph{resistant}\footnote{sometimes called \emph{Swan group}} if every fusion system on $P$ is controlled.
\item We call $P$ \emph{fusion-trivial} if every fusion system on $P$ is trivial.
\end{itemize}
\end{Def}

\begin{Ex}\label{excontrol}\hfill
\begin{enumerate}[(i)]
\item Let $P\in\Syl_p(G)$. Then $\FC_P(G)$ is controlled if and only if $\N_G(P)$ controls fusion in $P$. 

\item By the Schur--Zassenhaus theorem, $\Inn(P)$ has a complement $A$ in $\Aut_\FC(P)$ since $P$ is automized. 
If $\FC$ is controlled, then $\FC=\FC_P(P\rtimes A)$. In particular, $\FC$ is not exotic.

\item Every abelian $p$-group is resistant by \autoref{exessentail}.

\item Stancu proved that every metacyclic $p$-group for $p>2$ is resistant. I proved that metacyclic $2$-groups apart from $D_{2^n}$, $Q_{2^n}$, $SD_{2^n}$ and $C_{2^n}^2$ are fusion-trivial.

\item Every $2$-group of the form $C_{2^{a_1}}\times\ldots\times C_{2^{a_n}}$ with $a_1<\ldots<a_n$ is fusion-trivial. The smallest non-trivial fusion-trivial $p$-group of odd order is $\mathtt{SmallGroup}(3^6,46)$. 

\item\label{exconD8} Let $\FC$ be a saturated fusion system on $P=\langle x,y:x^4=y^2=1,\ {}^yx=x^{-1}\rangle\cong D_8$. There are three cases:
\begin{enumerate}[(a)]
\item $\FC$ is controlled and therefore trivial since $\Aut(P)\cong D_8$ is a $2$-group.
\item There is exactly one essential subgroup, say $\langle x^2,y\rangle$. Then $\FC=\FC_P(S_4)$.
\item There are two essential subgroups $\langle x^2,y\rangle$ and $\langle x^2,xy\rangle$. Then $\FC=\FC_P(\GL(3,2))$. In contrast to $S_4$, all involutions in $\GL(3,2)$ are conjugate, namely to the rational canonical form
\[\begin{pmatrix}
1&.&.\\
.&.&1\\
.&1&.
\end{pmatrix}.\]
\end{enumerate}
\end{enumerate}
\end{Ex}

\begin{Def}
We call $Q\unlhd P$ \emph{normal} in $\FC$ (and write $Q\unlhd\FC$) if $\N_\FC(Q)=\FC$.
\end{Def}

Let $Q,R\unlhd\FC$ and $\phi\in\Hom_\FC(S,T)$. Then there exist $\psi\in\Hom_\FC(RS,RT)$ and $\tau\in\Hom_\FC(QRS,QRT)$ such that $\psi(R)=R$, $\psi_S=\phi$, $\tau(Q)=Q$ and $\tau_{RS}=\psi$. Hence, $\tau(QR)=\tau(Q)\psi(R)=QR$ and $\tau_S=\psi_S=\phi$. This shows that $\phi\in\N_\FC(QR)$ and $QR\unlhd\FC$. The following definition is therefore justified.

\begin{Def}\hfill
\begin{itemize}
\item The (unique) largest normal subgroup of $\FC$ is denoted by $\pcore_p(\FC)$. 

\item We call $\FC$ \emph{constrained} if $\C_P(\pcore_p(\FC))\le\pcore_p(\FC)$.
\end{itemize} 
\end{Def}

\begin{Ex}\hfill
\begin{enumerate}[(i)]
\item If $Q\le P\in\Syl_p(G)$ and $Q\unlhd G$, then $Q\unlhd\FC_P(G)$. On the other hand, if $P$ is abelian, then $P\unlhd\FC_P(G)$, but not necessarily $P\unlhd G$. 

\item Every essential subgroup contains $\pcore_p(\FC)$ and $\Z(\FC)\le\pcore_p(\FC)$.

\item Every controlled fusion system $\FC$ on $P$ is constrained with $\pcore_p(\FC)=P$. On the other hand, $\FC:=\FC_{D_8}(S_4)$ is constrained with $\pcore_2(\FC)=V_4$, but not controlled.

\item Let $G=\GL(3,2)$ and $P\in\Syl_2(G)$. Then $\FC_P(G)$ is not constrained since the two essential subgroups intersect in $\Z(P)$ (cf. \autoref{excontrol}\eqref{exconD8}). Moreover, $\foc(\FC)=P\ntrianglelefteq\FC$.

\item A group $G$ is called $p$-\emph{constrained} if $\C_{\overline{G}}(\pcore_p(\overline{G}))\le\pcore_p(\overline{G})$ where $\overline{G}:=G/\pcore_{p'}(G)$. In this case $\FC:=\FC_P(G)$ is constrained with $\overline{\pcore_p(\FC)}=\overline{\pcore_p(\overline{G})}$. By \autoref{thmmodel} below every constrained fusion system arises in this way. The Hall--Higman lemma asserts that every ($p$-)solvable group is $p$-constrained.
\end{enumerate}
\end{Ex}

\begin{Thm}[Model theorem]\label{thmmodel}
For every constrained fusion system $\FC$ on $P$ there exists a unique finite group $G$ \textup(called \emph{model}\textup) such that
\begin{enumerate}[(i)]
\item $P\in\Syl_p(G)$ and $\FC=\FC_P(G)$.
\item $\pcore_{p'}(G)=1$ and $\C_G(\pcore_p(G))\le\pcore_p(G)$. 
\end{enumerate}
In particular, $\FC$ is not exotic. 
\end{Thm}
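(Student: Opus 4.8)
The plan is to reconstruct everything from the action on the normal $p$-subgroup $Q:=\pcore_p(\FC)$. Since $\FC$ is constrained, $\C_P(Q)\le Q$, so in fact $\C_P(Q)=\Z(Q)=:Z$ and $Q$ is self-centralizing (centric). As $Q\unlhd\FC$ it is $\FC$-normalized, hence (by saturation) automized, so that $\Aut_P(Q)=P/Z$ is a Sylow $p$-subgroup of the finite group $A:=\Aut_\FC(Q)$, and $A$ contains $\Inn(Q)=Q/Z$ as a normal subgroup. A model must satisfy $Q\unlhd G$, $\C_G(Q)=Z$ and $\Aut_G(Q)=A$; equivalently $G$ is an extension $1\to Q\to G\to\Out_\FC(Q)\to 1$ inducing the natural \emph{injective} outer action $\omega\colon\Out_\FC(Q)\hookrightarrow\Out(Q)$.

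First I would settle existence. The obstruction to realizing $\omega$ by a group extension of $Q$ by $\Out_\FC(Q)$ is a class in $\cohom^3\bigl(\Out_\FC(Q);Z\bigr)$, with $Z$ regarded as an $\Out_\FC(Q)$-module via $\omega$. Because $Z$ is a $p$-group this cohomology is a $p$-group, so restriction to a Sylow $p$-subgroup is injective, and it suffices to kill the obstruction over $\Out_P(Q)=P/Q\in\Syl_p(\Out_\FC(Q))$. But there the extension is realized by $P$ itself (as $Q\unlhd P$ with $\C_P(Q)=Z\le Q$), so the restricted obstruction vanishes, and $G$ exists. Injectivity of $\omega$ forces $\C_G(Q)\le Q$, i.e. $\C_G(Q)=Z$. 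Any normal $p'$-subgroup $N$ then satisfies $[N,Q]\le N\cap Q=1$, so $N\le\C_G(Q)=Z$ and $N=1$; thus $\pcore_{p'}(G)=1$, and condition (ii) holds since $\C_G(\pcore_p(G))\le\C_G(Q)=Z\le\pcore_p(G)$.

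Next I would arrange that $P$ itself occurs as the Sylow subgroup. The extensions realizing $\omega$ form a torsor under $\cohom^2\bigl(\Out_\FC(Q);Z\bigr)$, compatibly with $\mathrm{res}\colon\cohom^2(\Out_\FC(Q);Z)\to\cohom^2(P/Q;Z)$. Writing $[P]$ for the class of $1\to Q\to P\to P/Q\to 1$, I would choose $G$ so that the preimage of $P/Q$ realizes $[P]$; this is possible because $[P]$ lies in the image of $\mathrm{res}$, being a \emph{stable} element in the sense of Cartan--Eilenberg, the stability relations being exactly the fusion relations encoded by the saturated system $\FC$. With this choice the preimage of $P/Q$ is isomorphic to $P$, and since $[\Out_\FC(Q):P/Q]$ is prime to $p$ we obtain $P\in\Syl_p(G)$. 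To identify $\FC=\FC_P(G)$, I would use that $Q$ is normal and centric, so that $R\mapsto R/Z=\Aut_R(Q)$ is a bijection between the subgroups $Q\le R\le P$ and the subgroups of $A$ lying between $Q/Z$ and $\Aut_P(Q)=P/Z$, under which $\FC$-morphisms correspond to conjugations inside $A$ (centricity makes $\C_\FC(Q)$ trivial, so a morphism is determined by its restriction to $Q$ up to conjugation by $Z\le Q$). The very same correspondence describes $\FC_P(G)$ because $G$ realizes $A$ with $\Aut_P(Q)$ as the image of $P$; as both systems are generated by the $\Aut(R)$ with $Q\le R$ (Alperin--Goldschmidt, \autoref{AFT}, every essential subgroup containing $\pcore_p(\FC)=Q$), we get $\FC=\FC_P(G)$, and in particular $\pcore_p(G)=Q$.

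For uniqueness, a second model $G'$ shares the triple $(Q,Z,A)$, so $G'/Z\cong A\cong G/Z$ canonically; lifting this isomorphism over the common normal subgroup $Q$ and matching it with the Sylow subgroup $P$ forces $G\cong G'$, the residual ambiguity again being measured by $\cohom^2(\Out_\FC(Q);Z)$ and pinned down by $[P]$. Non-exoticness is then immediate from $\FC=\FC_P(G)$ with $P\in\Syl_p(G)$. I expect the main obstacle to be twofold: the vanishing of the $\cohom^3$-obstruction (handled cleanly by Sylow restriction together with the realization by $P$), and, more delicately, arranging that the chosen extension has $P$ itself as Sylow subgroup and reproduces $\FC$ on the nose rather than merely recovering the automizer $\Aut_\FC(Q)$ — it is here that centricity of $Q$ and the stable-element description of $\mathrm{im}(\mathrm{res})$ do the essential work.
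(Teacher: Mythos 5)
The paper itself gives no proof of \autoref{thmmodel} (it is stated as a survey result, with the literature — Broto--Castellana--Grodal--Levi--Oliver, reproduced in Aschbacher--Kessar--Oliver, Thm.~III.5.10 — implicitly cited), so there is nothing internal to compare against. Your proposal reconstructs the standard published proof, in a mild variant: the reference proof builds $G$ as an extension $1\to\Z(Q)\to G\to\Aut_\FC(Q)\to 1$ with \emph{abelian} kernel, so that extensions are literally classified by $\cohom^2(\Aut_\FC(Q);\Z(Q))$ and no $\cohom^3$ obstruction arises, whereas you work with the non-abelian kernel $Q$ and outer quotient $\Out_\FC(Q)$, paying for it with an Eilenberg--MacLane obstruction in $\cohom^3$. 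Your transfer argument (these cohomology groups are $p$-groups since $\Z(Q)$ is, so restriction to $P/Q\in\Syl_p(\Out_\FC(Q))$ is injective, and the restricted problem is solved by $P$ itself) is correct, as are the verifications of $\pcore_{p'}(G)=1$, of $p$-constraint, and the identification of $\FC$ with $\FC_P(G)$ via \autoref{AFT} and the fact that every essential subgroup contains $Q$.

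Two points are glossed over and deserve to be named, since they are where the saturation hypothesis actually enters. First, the claim that the difference class in $\cohom^2(P/Q;\Z(Q))$ between $[P]$ and the restriction of a chosen extension is \emph{stable} in the Cartan--Eilenberg sense is the heart of the proof, not a formality: one must check that for $\alpha\in\Aut_\FC(Q)$ carrying $\Aut_R(Q)$ into $\Aut_P(Q)$ (with $Q\le R\le P$) the two restrictions of $[P]$ agree, and this is exactly the receptivity of the centric subgroup $Q$ (the extension axiom applied to $N_\alpha\supseteq R$), together with the uniqueness of such extensions up to $\Z(Q)$. Second, your uniqueness argument silently assumes that an arbitrary model $G'$ satisfies $\pcore_p(G')=Q$ and hence $\C_{G'}(Q)=\Z(Q)$; the inclusion $\pcore_p(G')\le\pcore_p(\FC)$ is automatic, but the reverse inclusion is a separate (standard, not difficult) lemma for $p$-constrained groups with trivial $p'$-core, and without it the "common triple $(Q,\Z(Q),\Aut_\FC(Q))$" is not yet available. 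With those two gaps filled, the argument is the accepted one.
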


Let $G$ be a model for the constrained fusion system $\FC$ on $P$ with $|P|=p^n$. A theorem of Hall shows that
\[|G|\le|G/\pcore_p(G)||P|\le|\Aut(\pcore_p(G))|p^n\le|\GL(n,p)|p^n=(p^n-1)\ldots(p^n-p^{n-1})p^n.\]
In particular, there are only finitely many choices when $P$ is given. 

\begin{Ex}
If $\FC$ is controlled, then $P\rtimes A$ is the model for $\FC$ where $A\cong\Out_\FC(P)$ as in \autoref{excontrol}. 
\end{Ex}

\begin{Thm}[\textsc{Glesser}]\label{glesser}
Let $p>2$ and $\FC$ a non-trivial fusion system on $P$. Then $\FC$ contains \textup(as a subcategory\textup) a non-trivial constrained fusion system on $P$. 
\end{Thm}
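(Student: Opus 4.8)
The plan is to reduce the statement to exhibiting one subgroup $Q\unlhd P$ that is self-centralizing, $\C_P(Q)\le Q$, and on which $\FC$ still acts non-trivially, i.e. $\N_\FC(Q)$ is not the trivial system $\FC_P(P)$. Given such a $Q$, the subsystem $\EC:=\N_\FC(Q)$ is the desired one: as $Q\unlhd P$ it is $\FC$-normalized, so $\EC$ is saturated (Puig) and is a fusion system on $\N_P(Q)=P$; by construction $Q\unlhd\EC$, whence $\pcore_p(\EC)\ge Q$ and $\C_P(\pcore_p(\EC))\le\C_P(Q)\le Q\le\pcore_p(\EC)$, so $\EC$ is constrained; and $\EC$ is non-trivial by the choice of $Q$. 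Two situations are disposed of at once. If $\FC$ is controlled it is itself constrained with $\pcore_p(\FC)=P$, so $\EC=\FC$ works; and if $\Aut_\FC(P)\ne\Inn(P)$ one takes $Q=P$, for which $\N_\FC(P)$ is the controlled, non-trivial subsystem determined by $\Aut_\FC(P)$.

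I would then induct on the number of morphisms of $\FC$. If $\FC$ is constrained we are finished, so suppose not; in particular $\FC$ is not controlled and possesses an essential subgroup. The decisive ingredient for odd $p$ is the Kessar--Linckelmann $\ZJ$-theorem quoted above: since $\FC$ is non-trivial, $\N_\FC(W)$ is non-trivial for the characteristic (hence normal) subgroup $W:=\Z(\J(P))$. This is the one place where $p>2$ is used, and it has no counterpart for $p=2$. If $W\not\unlhd\FC$, then $\N_\FC(W)\subsetneq\FC$ is a proper, non-trivial, saturated subsystem on $P$, and applying the inductive hypothesis to it produces a non-trivial constrained subsystem on $P$ lying inside $\FC$.

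The remaining case is $W\unlhd\FC$, where the normalizer construction stalls because $\N_\FC(W)=\FC$. Here the idea is to replace $W$ by a larger self-centralizing normal subgroup that is \emph{not} normal in $\FC$. With $R:=\pcore_p(\FC)\ge W$, set $R^\ast:=R\,\C_P(R)\unlhd P$; then $\C_P(R^\ast)=\Z(\C_P(R))\le R^\ast$, so $R^\ast$ is self-centralizing, and because $\FC$ is not constrained we have $\C_P(R)\nleq R$, forcing $R^\ast\supsetneq R=\pcore_p(\FC)$ and hence $R^\ast\not\unlhd\FC$. Thus $\N_\FC(R^\ast)\subsetneq\FC$ is again a proper saturated subsystem on $P$, and \emph{provided it is non-trivial} the induction concludes as before.

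The main obstacle is exactly this last proviso: one must guarantee that passing to $\N_\FC(R^\ast)$ does not annihilate all the fusion, i.e. that some $p'$-automorphism of $\FC$ survives as an automorphism normalizing $R^\ast$. This is subtle, because the essential automorphisms of $\FC$ act on subgroups containing $\pcore_p(\FC)$ but need not normalize $\C_P(\pcore_p(\FC))$, so a crude normalizer can destroy them. I expect to control this by working instead with the normalizer tower $Q<\N_P(Q)<\cdots<P$ of an essential subgroup $Q$, whose penultimate term $M$ is self-centralizing and genuinely normal in $P$, and by playing the choice of $M$ against the Kessar--Linckelmann dichotomy so that at each stage the chosen normal self-centralizing subgroup both fails to be normal in $\FC$ and still carries non-trivial automorphisms; the configuration in which every such choice is exhausted should be forced to be constrained, closing the induction.
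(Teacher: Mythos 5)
The paper states Glesser's theorem without proof (it is attributed to the reference \emph{Sparse fusion systems}), so your attempt has to be judged on its own terms rather than against an argument in the text. The scaffolding you build is correct: a subgroup $Q\unlhd P$ with $\C_P(Q)\le Q$ is automatically $\FC$-normalized, so $\N_\FC(Q)$ is a saturated system on $\N_P(Q)=P$ with $Q\le\pcore_p(\N_\FC(Q))$, hence constrained; the two easy cases ($\FC$ controlled, or $\Aut_\FC(P)\ne\Inn(P)$) are handled correctly; the computation $\C_P(R\C_P(R))=\Z(\C_P(R))\le R\C_P(R)$ and the conclusion $R^\ast\not\unlhd\FC$ when $\FC$ is not constrained are both right; and invoking the Kessar--Linckelmann theorem to see that $\N_\FC(\Z(\J(P)))$ is non-trivial, then recursing when $\Z(\J(P))\not\unlhd\FC$, is a legitimate and correctly placed use of the hypothesis $p>2$ (consistent with the failure for $p=2$ witnessed by $\FC_{D_{16}}(\PGL(2,7))$).

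The problem is that all of the content of the theorem sits in the terminal case you leave open. Your induction bottoms out at a non-trivial, non-constrained $\FC$ with $\Aut_\FC(P)=\Inn(P)$, $\Z(\J(P))\unlhd\FC$, and the property that every proper saturated subsystem of $\FC$ on $P$ is trivial --- this is exactly what Glesser calls a \emph{sparse} fusion system, and his theorem is precisely the assertion that such a system cannot exist for odd $p$ (equivalently, that sparse systems are constrained). In that situation $\N_\FC(R^\ast)$ is proper, hence trivial, and your recursion has nothing to stand on; the difficulty you identify is real, since the $p'$-automorphisms of an essential subgroup $Q\ge\pcore_p(\FC)$ need not normalize $\C_P(\pcore_p(\FC))$, so they can all be lost on passing to $\N_\FC(R^\ast)$. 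Your closing sentence --- that ``the configuration in which every such choice is exhausted should be forced to be constrained'' --- is a restatement of the theorem rather than an argument for it, and the proposed repair via the normalizer tower of an essential subgroup is not developed to the point where one could check it. As it stands, the proposal reduces the theorem to its hardest case and stops there; closing it requires a genuinely new idea (in Glesser's treatment, a finer analysis of the essential subgroups of a sparse system combined with the Glauberman--Thompson-type criterion), not just more bookkeeping with normalizer subsystems.
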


One can use \autoref{glesser} and the model theorem to decide whether a given group $P$ is fusion-trivial. 
The fusion system $\FC_{D_{16}}(\PGL(2,7))$ (found by Craven) shows that Glesser's theorem fails for $p=2$.
In order to classify non-constrained fusion systems (especially exotic fusion systems), Oliver has introduced \emph{reduced} and \emph{tame} fusion systems. 
In an ongoing effort to simplify the CFSG, Aschbacher has investigated \emph{simple} fusion systems. Unfortunately, fusion systems of simple groups are not always simple, but well-studied nevertheless.

\section{Representation theory}

Let $F$ be an algebraically closed field of characteristic $p>0$. Let $B$ be a ($p$-)\emph{block} of $FG$, i.\,e. an indecomposable direct summand. We fix a \emph{defect group} $D\le G$ of $B$. 

\begin{Def}[\textsc{Alperin--Broué}, \textsc{Olsson}]\label{deffusbl}\hfill
\begin{itemize}
\item We call $(Q,b_Q)$ a $B$-\emph{subpair} if $Q\le D$ and $b_Q$ is a Brauer correspondent of $B$ in $Q\C_G(Q)$, i.\,e. $b_Q^G=B$. For subpairs we write $(S,b_S)\unlhd(T,b_T)$ if $S\unlhd T$ and $b_S^{T\C_G(S)}=b_T^{T\C_G(S)}$.\footnote{Alperin--Broué require additionally that $b_S$ is $T$-invariant, but Olsson showed that this is unnecessary.} Let $\le$ be the transitive closure of $\unlhd$, i.\,e. 
\[(S,b_S)\le(T,b_T)\iff(S,b_T)=(T_1,b_1)\unlhd\ldots\unlhd(T_n,b_n)=(T,b_T).\] 

\item We fix a $B$-subpair $(D,b_D)$ (by Brauer's extended first main theorem, $(D,b_D)$ is unique up to conjugation). It can be shown that for every $Q\le D$ there exists a unique subpair of the form $(Q,b_Q)\le(D,b_D)$. We fix those in the following. The fusion system $\FC=\FC_D(B)$ on $D$ is defined by
\[\Hom_\FC(S,T):=\bigl\{\phi\colon S\to T:\exists g\in G:{}^g(S,b_S)\le(T,b_T)\wedge\phi(s)={}^gs\,\forall s\in S\bigr\}.\]
\end{itemize}
\end{Def}

\begin{Thm}[\textsc{Puig}]\label{puig}
The fusion system $\FC_D(B)$ is saturated.
\end{Thm}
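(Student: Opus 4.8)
The plan is to verify one of the equivalent saturation axioms of \autoref{thmequivdef} — most conveniently Stancu's formulation (3): that $D$ is $\FC$-automized and every $\FC$-normalized subgroup is $\FC$-receptive — by transporting the Sylow-theoretic argument from the group case (the saturation proof for $\FC_P(G)$ given above) into the language of $B$-subpairs. The first task is to set up the dictionary. For $Q\le D$ with its fixed subpair $(Q,b_Q)\le(D,b_D)$, write $\N_G(Q,b_Q):=\{g\in\N_G(Q):{}^gb_Q=b_Q\}$. Using that the subpairs below $(D,b_D)$ are unique and that the relation $\le$ is preserved under $G$-conjugation, one checks $\Aut_\FC(Q)\cong\N_G(Q,b_Q)/\C_G(Q)$ and $\Aut_D(Q)\cong\N_D(Q)\C_G(Q)/\C_G(Q)$. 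Thus the roles played by $\N_G(Q)$ and $\N_P(Q)$ in the group case are now played by $\N_G(Q,b_Q)$ and $\N_D(Q)$, and ``Sylow's theorem for $p$-subgroups of $\N_G(Q)$'' must be replaced by an analogue for $\N_D(Q)$ inside $\N_G(Q,b_Q)$.

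Second, I would establish the subpair foundations that make $\FC$ and these identifications well defined: the existence and uniqueness (up to $G$-conjugacy) of a maximal $B$-subpair $(D,b_D)$ (Brauer's extended first main theorem — the ``Sylow theorem for subpairs''), the existence and uniqueness of $(Q,b_Q)\le(D,b_D)$ for each $Q\le D$, and the transitivity and conjugation-equivariance of $\le$. The key computation at the heart of everything is a defect-group statement: for $(Q,b_Q)\le(D,b_D)$ one shows, via the Brauer homomorphism $\mathrm{Br}_Q$ and the theory of Brauer correspondents, that $Q\C_D(Q)$ is a defect group of $b_Q$ as a block of $Q\C_G(Q)$, and hence — passing to the block of $\N_G(Q,b_Q)$ covering $b_Q$ and using that the relevant inertial quotient is a $p'$-group — that when $Q$ is $\FC$-normalized the image $\N_D(Q)\C_G(Q)/\C_G(Q)$ is a Sylow $p$-subgroup of $\N_G(Q,b_Q)/\C_G(Q)$. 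Specializing to $Q=D$ gives $\Aut_D(D)=\Inn(D)\in\Syl_p(\Aut_\FC(D))$ because $\N_G(D,b_D)/D\C_G(D)$ is a $p'$-group; this is the automized claim.

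Third, receptivity is then proved exactly as in the group case. Given an isomorphism $\phi\colon S\to T$ in $\FC$ with $T$ $\FC$-normalized, it is induced by some $g\in G$ with ${}^g(S,b_S)=(T,b_T)$. For $x\in N_\phi$ the relation $\phi c_x\phi^{-1}\in\Aut_D(T)$ forces ${}^{gxg^{-1}}$ to agree on $T$ with conjugation by an element of $\N_D(T)$ modulo $\C_G(T)$, so ${}^gN_\phi$ is a $p$-subgroup of $\N_D(T)\C_G(T)\cap\N_G(T,b_T)$. Since $\N_D(T)\C_G(T)/\C_G(T)$ is Sylow in $\N_G(T,b_T)/\C_G(T)$ by the previous step, after adjusting $g$ by a suitable element of $\N_D(T)\C_G(T)$ we may assume ${}^gN_\phi\le\N_D(T)\le D$; conjugation by this adjusted element then restricts to $\phi$ on $S$ and gives the required extension of $\phi$ to $N_\phi$.

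I expect the main obstacle to be the defect-group computation in the second step — proving that $Q\C_D(Q)$ is a defect group of $b_Q$ and deducing the Sylow property of $\N_D(Q)$ in $\N_G(Q,b_Q)/\C_G(Q)$. This is the one place where genuine block theory (the Brauer homomorphism, Brauer's first and extended main theorems, and the behaviour of defect groups under block covering in the normal subgroup $Q\C_G(Q)\unlhd\N_G(Q,b_Q)$) is indispensable: everything else is the formal category-theoretic and Sylow manipulation already seen in the group case, but the ``Sylow theorem for subpairs'' has no soft proof and must be extracted from the representation theory.
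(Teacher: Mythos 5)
The paper states Puig's theorem without proof, so the only internal point of comparison is its saturation proof for $\FC_P(G)$ with $P\in\Syl_p(G)$, which you correctly take as your template. Your outline is the standard argument (Alperin--Broué/Broué--Puig, as presented in Linckelmann's notes cited in the bibliography), and you locate the essential block-theoretic input in the right place: the facts that $b_Q$ has defect group $Q\C_D(Q)$ and that $b_Q^{\N_G(Q)}$ has defect group $\N_D(Q)$ for $\FC$-normalized $Q$ are exactly \autoref{local}(i),(ii) of the paper, and the $p'$-ness of the inertial quotient $\N_G(D,b_D)/D\C_G(D)$ gives automization of $D$ as you say.

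There is, however, one step in your receptivity argument that does not go through as written. You deduce from ``$\N_D(T)\C_G(T)/\C_G(T)$ is Sylow in $\N_G(T,b_T)/\C_G(T)$'' that the $p$-subgroup ${}^gN_\phi$ of $\N_D(T)\C_G(T)$ can be conjugated into $\N_D(T)$ by an element of $\N_D(T)\C_G(T)$. In the group case the analogous deduction works because $\N_P(T)\in\Syl_p(\N_P(T)\C_G(T))$, inherited from $\N_P(T)\in\Syl_p(\N_G(T))$. In the block case this fails: $\C_G(T)$ is not a $p'$-group, and $\C_D(T)$ is only a defect group of $b_T$, not a Sylow $p$-subgroup of $\C_G(T)$ (already for $T=1$ one has $\C_G(T)=G$ and $\C_D(T)=D$). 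So $\N_D(T)$ need not be Sylow in $\N_D(T)\C_G(T)$, and ordinary Sylow theory cannot move ${}^gN_\phi$ into $\N_D(T)$; knowing the Sylow property only in the quotient modulo $\C_G(T)$ does not lift. The correct substitute is to carry the Brauer pairs along: $({}^gN_\phi,{}^g b_{N_\phi})$ is a subpair containing $(T,b_T)$ whose first component normalizes $(T,b_T)$, and one invokes the conjugacy of maximal such subpairs --- equivalently, the conjugacy of defect groups of the block of ${}^gN_\phi\cdot T\C_G(T)$ covering the stable block $b_T$ of $T\C_G(T)\unlhd\N_G(T,b_T)$ --- to produce a conjugating element that can be taken in $\C_G(T)\N_D(T)$. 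In other words, the ``Sylow theorem for subpairs'' that you rightly single out as the main obstacle is needed not only once to prove automization, but a second time inside the receptivity step; that step is not ``exactly as in the group case''. With that repair, and the routine check that the adjusted conjugation sends $(N_\phi,b_{N_\phi})$ to the canonical subpair of its image (so that the extension really is a morphism of $\FC$), your plan is the standard proof.
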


We call $B$ \emph{nilpotent} (\emph{controlled}, \emph{constrained}) if $\FC_D(B)$ is trivial (controlled, constrained). 
The irreducible ordinary and modular characters of $G$ can be distributed into blocks. We set $k(B):=|\Irr(B)|$ and $l(B):=|\IBr(B)|$.
Moreover, let $\foc(B):=\foc(\FC_D(B))$. 

\begin{Ex}\hfill
\begin{enumerate}[(i)]
\item The \emph{principal} block $B=B_0(G)$ contains the trivial character of $G$. In this case $D\in\Syl_p(G)$ and $\FC_D(B)=\FC_D(G)$. In particular, $G$ is $p$-nilpotent if and only if $B$ is nilpotent. In this case, all blocks of $G$ are nilpotent.

\item If $\C_G(\pcore_p(G))\le\pcore_p(G)$, then $B_0(G)$ is the only block of $G$. 

\item In the context \autoref{deffusbl}, $\Out_\FC(D)=\N_G(D,b_D)/D\C_G(D)$ is called the \emph{inertial quotient} of $B$ and its order is the \emph{inertial index}, which is coprime to $p$ by \autoref{puig}. 

\item The dihedral group $G=D_{24}$ has a nilpotent $3$-block with defect group $D\cong C_3$, while the principal $3$-block is not nilpotent. This shows that $D$ alone does not determine the fusion system of a block.  
\end{enumerate}
\end{Ex}

\begin{Con}\label{con}
For every block $B$ of $G$ with defect group $D$ there exists a finite group $H$ such that $D\in\Syl_p(H)$ and $\FC_D(B)=\FC_D(H)$.
\end{Con}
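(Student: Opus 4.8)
The plan is to reduce the conjecture to quasisimple groups and then appeal to the classification of finite simple groups (CFSG). Note first that the statement is trivially true for the principal block $B_0(G)$: there $D\in\Syl_p(G)$ and $\FC_D(B)=\FC_D(G)$, so one may take $H=G$. The whole difficulty therefore lies in non-principal blocks, where $D$ is a proper, non-Sylow subgroup of $G$ and one must genuinely manufacture a new group $H$ in which $D$ becomes Sylow while reproducing the block fusion of \autoref{deffusbl} by ordinary conjugation.

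First I would perform the standard block-theoretic reductions. By the Fong--Reynolds correspondence one reduces to a block covering a single block of $\pcore_{p'}(G)$, and by passing to a suitable central extension one may assume $\pcore_{p'}(G)\le\Z(G)$. If the layer vanishes, $\E(G)=1$, then $G/\pcore_{p'}(G)$ is $p$-constrained, so $\FC_D(B)$ is constrained and the Model Theorem (\autoref{thmmodel}) already delivers a realizing group; this disposes at once of nilpotent blocks (\autoref{puig}) and, more generally, of blocks of $p$-solvable groups. Hence one may assume $\E(G)\ne 1$.

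The core of the argument is then the reduction to quasisimple groups. Here I would analyse how $D$ and the subpairs $(Q,b_Q)$ distribute across the components of $\E(G)$, aiming to express $\FC_D(B)$ as a central (or wreath) product of the block fusion systems attached to the individual components, twisted by the permutation action of $G$ on the set of components. Granting realizability for the block fusion system of each quasisimple factor, one would reassemble the corresponding realizing groups---together with the relevant outer-automorphism and component-permutation data---into a single finite $H$ with $D\in\Syl_p(H)$, using that saturation is preserved (\autoref{puig}) and that controlled constituents are realized by split extensions $D\rtimes A$ as in the controlled case of \autoref{thmmodel}.

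The hard part is twofold. Structurally, no clean reduction theorem of the shape above is currently available: block fusion systems do not behave as transparently under central extensions, automorphism extensions and wreath products as the sketch pretends, and the real difficulty is to keep the Brauer-correspondence condition $b_Q^G=B$ under control through each of these operations. On the classification side, the quasisimple case is itself open in general---for groups of Lie type in non-defining characteristic the block fusion is governed by $d$-Harish-Chandra theory and centralizers of semisimple elements (Broué--Malle--Michel, Cabanes--Enguehard), and a uniform verification of realizability across all families (alternating, sporadic, and Lie type in defining and non-defining characteristic) has not been completed. It is precisely because both the reduction and the quasisimple verification remain unfinished that the statement is recorded above as a conjecture.
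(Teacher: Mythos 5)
This statement is recorded in the paper as an open conjecture, with no proof given, and you have correctly recognized that: your text is a survey of a plausible attack (Fong--Reynolds and the second Fong reduction, the model theorem for the constrained case, then a CFSG-based reduction to quasisimple groups) rather than a proof, and it honestly flags that both the reduction step and the quasisimple verification remain unfinished. This matches the paper's own treatment, which only records partial evidence --- blocks of symmetric and alternating groups, groups of Lie type in defining characteristic, and the fact that the known exotic fusion systems do not occur as block fusion systems --- so there is nothing to fault beyond noting that no proof was expected or delivered.
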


\begin{Thm}\hfill
\begin{enumerate}[(i)]
\item Let $B$ be a block of $S_n$ with defect group $D$. Then there exists an integer $w\ge 0$ \textup(called the \emph{weight} of $B$\textup) such that $D\in\Syl_p(S_{pw})$ and $\FC_D(B)=\FC_D(S_{pw})$. 

\item Let $B$ be a block of $A_n$ with defect group $D$. Then $\FC_D(B)\in\{\FC_D(S_{pw}),\FC_D(A_{pw})\}$ for some $w\ge 0$.
\end{enumerate}
\end{Thm}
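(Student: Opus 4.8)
The plan is to reduce both parts to the classical block theory of the symmetric groups and then to control the fusion through the Brauer subpairs of \autoref{deffusbl}. First I would invoke the \emph{Nakayama conjecture} (a theorem): the blocks of $S_n$ are parametrised by pairs $(\kappa,w)$, where $\kappa$ is a $p$-core partition and $w\ge 0$ is the \emph{weight}, subject to $|\kappa|+pw=n$, with two partitions lying in the same block exactly when they share the $p$-core $\kappa$. A classical computation (James--Kerber, Brauer--Robinson) then identifies the defect group $D$ of the block $B=B(\kappa,w)$ with a Sylow $p$-subgroup of $S_{pw}$, acting with $pw$ moved points and $|\kappa|=n-pw$ fixed points. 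This settles the group-theoretic part of (i), and everything reduces to showing that the \emph{fusion} on $D$ depends only on $w$, not on $\kappa$.

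For the fusion I would use the Young subgroup $Y:=S_{|\kappa|}\times S_{pw}$, arranged so that $D$ lies in the second factor and fixes the first $|\kappa|$ points. The block $B$ is the unique block of $S_n$ covering $b_\kappa\otimes B_0(S_{pw})$, where $b_\kappa$ is the defect-zero block of $S_{|\kappa|}$ labelled by $\kappa$ and $B_0(S_{pw})$ is the principal (weight-$w$) block of $S_{pw}$, so that $D$ is already a defect group of the covered block inside $Y$. The decisive structural input is the description of centralisers of $p$-subgroups in symmetric groups: for $Q\le D$ the group $\C_{S_n}(Q)$ is a direct product of wreath products $C_{p^k}\wr S_{m_k}$ together with a symmetric group on the points fixed by $Q$. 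Running through \autoref{deffusbl}, one checks along the chain $(Q,b_Q)\le(D,b_D)$ that the core $\kappa$ only ever feeds into the defect-zero factor supported on the fixed points, and hence contributes nothing to $\N_{S_n}(Q,b_Q)/\C_{S_n}(Q)$. Thus every $B$-subpair and every inclusion matches the corresponding datum for $B_0(S_{pw})$, and \autoref{AFT} (together with saturation, \autoref{puig}) forces $\FC_D(B)=\FC_D(S_{pw})$.

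I expect the main obstacle to be precisely this matching of subpairs. When $Q$ is a proper subgroup of $D$ its support is a proper subset of the $pw$ moved points, so a priori $\N_{S_n}(Q)$ can mix the moved points with the $|\kappa|$ fixed points and introduce automorphisms invisible inside $S_{pw}$. The real work is to show that pinning the subpair inside $(D,b_D)$ — equivalently, passing to the correct Brauer correspondent $b_Q$ — forces every relevant normalising element to preserve the moved-point set up to the defect-zero core factor, so that no extra fusion survives. This is where the combinatorics of $p$-cores and rim-hook removal genuinely enters, and it is the technical heart of the argument.

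Finally, for the alternating groups I would pass to $A_n\unlhd S_n$ of index $2$ and argue by Clifford theory. A block $B$ of $A_n$ is covered by a block $\tilde B$ of $S_n$, for which part (i) gives $\FC_{\tilde D}(\tilde B)=\FC_{\tilde D}(S_{pw})$. For odd $p$ the defect groups coincide ($\tilde D\le A_n$, so $D=\tilde D\in\Syl_p(S_{pw})=\Syl_p(A_{pw})$), and since $[S_n:A_n]=2$ is coprime to $p$, the system $\FC_D(B)$ is the restriction of $\FC_{\tilde D}(\tilde B)$ to $A_n$, differing from it in the automizers by a factor of at most $2$. The two alternatives are realised, respectively, by the full system $\FC_D(S_{pw})$ (when the inertia is full, i.e.\ $\tilde B$ does not split over $A_n$) and by the index-$2$ subsystem coming from $A_{pw}$ (when $\tilde B$ splits), which yields the stated dichotomy $\FC_D(B)\in\{\FC_D(S_{pw}),\FC_D(A_{pw})\}$. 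For $p=2$ the defect groups $\tilde D$ and $D$ may differ by an index-$2$ factor, and tracking this discrepancy is the extra subtlety in that case.
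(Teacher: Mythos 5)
The paper states this theorem without proof (it is a survey statement, with the underlying results due to Brauer--Robinson, Broué--Puig, An--Dietrich, Jacobsen and Nectoux), so there is no in-text argument to compare against; I can only assess your outline on its own terms. Your strategy for (i) is the standard one and the classical inputs are correctly identified: the Nakayama conjecture, the Brauer--Robinson computation of the defect group as a Sylow $p$-subgroup of $S_{pw}$ supported on the $pw$ moved points, and the reduction of the fusion question to a comparison of $B$-subpairs with the subpairs of the principal block of $S_{pw}$, using the wreath-product description of $\C_{S_n}(Q)$. But as you yourself say, the matching of subpairs --- showing that $\N_{S_n}(Q,b_Q)$ cannot mix the support of $D$ with the $|\kappa|$ fixed points in a way that creates fusion invisible in $S_{pw}$ --- is the entire content of the theorem, and your proposal stops exactly there. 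What is missing is the key observation that makes this work: the Brauer correspondent $b_Q$ decomposes along the orbit structure of $Q$, its factor on the $Q$-fixed points is forced to be the defect-zero block with core $\kappa$ on $|\kappa|$ of those points tensored with a principal-type block on the rest, and any element of $\N_{S_n}(Q,b_Q)$ must preserve this decomposition; one then checks that the induced automorphisms of $Q$ are exactly those arising in $\N_{S_{pw}}(Q,b_Q^{(0)})$ for the principal block. Without this step the proposal is a roadmap, not a proof.

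For (ii) there are two further gaps. First, for odd $p$ your Clifford-theoretic argument needs the theory of subsystems of $p'$-index: you must know that $\FC_D(B)$ is a normal subsystem of $\FC_D(S_{pw})$ of index dividing $2$ \emph{and} that the unique such proper subsystem is $\FC_D(A_{pw})$ (uniqueness comes from the classification of $p'$-index subsystems via $\hyp(\FC)$ and the quotient of $\Aut_\FC(D)$); neither point is addressed, and without the second the dichotomy $\FC_D(B)\in\{\FC_D(S_{pw}),\FC_D(A_{pw})\}$ does not follow. Second, and more seriously, for $p=2$ the index $[S_n:A_n]=2$ equals $p$, so the covering block of $S_n$ and the block of $A_n$ have different defect groups ($D=\tilde D\cap A_n\in\Syl_2(A_{2w})$) and the ``restriction of the fusion system by a $p'$-index'' mechanism is simply unavailable; this case requires a direct subpair analysis inside $A_n$ parallel to part (i), not a bookkeeping correction to the odd-$p$ argument. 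Flagging it as ``an extra subtlety'' understates that the proposed method does not apply there at all.
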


\begin{Thm}[\textsc{Humphreys}, \textsc{An--Dietrich}]
Let $B$ be a block of a group $G$ of Lie type in characteristic $p$ with defect group $D$. Then $D=1$ or $D\in\Syl_p(G)$ and $\FC_D(B)=\FC_D(G)$. 
\end{Thm}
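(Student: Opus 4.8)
The plan is to reduce the block theory of $G$ to the $p$-local geometry of its split $BN$-pair, the decisive tool being the Borel--Tits theorem. Write $U\in\Syl_p(G)$ for the unipotent radical of a Borel subgroup $\mathcal B=U\rtimes H$, where the torus $H$ is a $p'$-group and, crucially, $\N_G(U)=\mathcal B$. Every $p$-element of $G$ is unipotent, and $\Z(G)\le H$ has order prime to $p$. The Borel--Tits theorem supplies the inductive engine: each nontrivial $p$-subgroup $V\le G$ lies in the unipotent radical of a proper parabolic $P=L\ltimes\pcore_p(P)$ with $\N_G(V)\le P$, and the Levi complement $L$ is again of Lie type in characteristic $p$ but of strictly smaller rank. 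Before anything else I would record the two predicted extremes: the Steinberg character has degree $|U|=|G|_p$, hence $p$-defect zero, so defect-zero blocks exist; and the principal block $B_0$ has $D=U\in\Syl_p(G)$ with $\FC_U(B_0)=\FC_U(G)$ by the principal-block identity. The theorem thus asserts that no block has intermediate defect and that every full-defect block carries the group fusion system.

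For Humphreys' part (defect $1$ or Sylow) I would induct on the Lie rank, the base case being the rank-one groups $\SL_2(q)$, $\PSU(3,q)$, $\Sz(q)$ and ${}^2G_2(q)$: here $U$ is a TI Sylow subgroup and $\mathcal B=\N_G(U)$ is strongly $p$-embedded (these are exactly the families listed in \autoref{bender}), so Brauer's theory of blocks with a TI defect group shows that only the principal block has positive defect. For the inductive step let $B$ have defect group $D\ne 1$ and fix a maximal $B$-subpair $(D,b_D)$; then $b_D$ is a block of $\C_G(D)$ with central defect group $\Z(D)$ and is therefore nilpotent. Choosing $1\ne z\in\Z(D)$ and descending to the subpair $(\langle z\rangle,b_z)\le(D,b_D)$ exhibits $D$ as a defect group of a block $b_z$ of $\C_G(z)$. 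Since $z$ is a nontrivial $p$-element, Borel--Tits confines $\C_G(z)$ to a proper parabolic $P=L\ltimes V$ with $V=\pcore_p(P)$ and $L$ of strictly smaller rank; the programme is to control $b_z$ through the block theory of $P$ and its Levi $L$, to which the inductive hypothesis applies.

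The main obstacle lives exactly here. The local analysis will show only that $D$ is a Sylow $p$-subgroup of $\C_G(z)$, that is $D=\C_U(z)$ after conjugating $D$ into $U$; it does \emph{not} by itself force $z$ to be $p$-central, and an intermediate centralizer $\C_U(z)\subsetneq U$ is a priori a candidate defect group. Excluding these is the heart of the matter: one must show that such an intermediate subgroup supports only Steinberg-type (defect-zero) constituents in the relevant Levi, so that the induced block of $G$ cannot acquire $\C_U(z)$ as a defect group. Equivalently, one proves that a block of $G$ of positive defect necessarily contains the trivial module in its subpair lattice --- hence is principal up to a central character --- and it is here that the finer representation theory of the parabolics (the linkage principle, or An--Dietrich's subpair bookkeeping) becomes indispensable.

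Finally, the fusion-system assertion for $D=U\in\Syl_p(G)$ follows from the observation that full-defect blocks differ from $B_0$ only by a central $p'$-character. By Brauer's first main theorem the full-defect blocks of $G$ correspond to the blocks of $\mathcal B=\N_G(U)$, and $\pcore_{p'}(\mathcal B)=\Z(G)$ is a central $p'$-group; thus each such block covers a character $\lambda\in\Irr(\Z(G))$, the trivial one giving $B_0$. Since $\Z(G)$ is central of order prime to $p$ it lies in the centre of every $\C_G(S)$ and is fixed under all $G$-conjugation, so for each $S\le U$ the Brauer correspondent of the block $B_\lambda$ agrees with that of $B_0$ up to the inert character $\lambda$. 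Hence $G$ induces the same fusion on the $B_\lambda$-subpairs as on the $B_0$-subpairs, giving $\FC_U(B)=\FC_U(B_0)=\FC_U(G)$ and completing the proof.
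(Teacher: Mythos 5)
The paper states this theorem without proof: it is a survey item imported from Humphreys (defect zero or full defect) and An--Dietrich (the fusion system claim), so there is no internal argument to compare yours against; your proposal has to stand on its own, and as it stands it is a strategy outline rather than a proof. Two concrete problems. First, the base case is wrong as stated: for the rank-one groups it is false that ``only the principal block has positive defect'' --- $\SL_2(q)$ with $q$ odd has $\gcd(2,q-1)=2$ blocks of full defect. What the TI property actually yields, via Green's theorem that defect groups are Sylow intersections $P\cap{}^xP$, is that every defect group is trivial or full; but that shortcut is special to the TI situation and does not survive to higher rank, which already signals that the Borel--Tits induction on parabolics cannot by itself be the engine. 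Second, and decisively, your third paragraph concedes that the exclusion of intermediate defect groups $\C_U(z)\subsetneq U$ --- which is the entire content of Humphreys' theorem --- is ``the heart of the matter'' and then does not carry it out. Borel--Tits confines $\C_G(z)$ to a proper parabolic, but nothing in your setup prevents a block of the Levi from inducing to a block of $G$ with an intermediate defect group; ruling this out requires genuinely representation-theoretic input (in Humphreys' argument, the Curtis--Richen highest-weight theory of the irreducible $FG$-modules together with the projectivity of the Steinberg module), not $p$-local geometry alone.

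The final paragraph on the fusion system is closer to correct in outline: the full-defect blocks are indeed parametrized by $\Irr(\Z(G))$, and the goal is to show that for each $S\le D$ the subpair $(S,b_S)$ compatible with $\lambda$ is unique with $\N_G(S,b_S)=\N_G(S,b_S^{(0)})$, so that all full-defect blocks induce the same fusion as the principal one. But as written it presupposes the classification of full-defect blocks that the earlier paragraphs failed to establish, and the sentence asserting that ``the Brauer correspondent of $B_\lambda$ agrees with that of $B_0$ up to the inert character $\lambda$'' is precisely the uniqueness statement that needs proof --- it is where An--Dietrich (building on Cabanes--Enguehard) do the actual work on the local subgroups $\C_G(S)$. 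So: the right general picture and an accurate diagnosis of where the difficulty sits, but the two load-bearing steps are missing and the base case contains a false claim.
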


It has been shown that there is no block with the exotic fusion systems mentioned in \autoref{exexotic}. 

\begin{Thm}[\textsc{Puig}]
Let $B$ be nilpotent. Then $B\cong (FD)^{n\times n}$ for some $n\ge 1$. In particular, $B$ and $FD$ are \emph{Morita equivalent}, i.\,e. they have equivalent module categories. Moreover, $k(B)=k(D)$ and $l(B)=1$.
\end{Thm}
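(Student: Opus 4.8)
The plan is to prove a stronger statement about the \emph{source algebra} of $B$, from which the module-categorical and numerical claims follow formally. Throughout I would work over a complete discrete valuation ring $\mathcal{O}$ with residue field $F$ and fraction field $K$ of characteristic $0$, chosen large enough to be a splitting field for $G$ and all relevant subgroups; reducing modulo $p$ recovers the statement over $F$, while passing to $K$ is what will eventually yield the count $k(B)=k(D)$ (note that $k(B)=k(D)$ genuinely lives in characteristic $0$ and cannot be seen over $F$ alone). Fix a \emph{source idempotent} $i\in B^D$, that is, a primitive idempotent of the subalgebra of $D$-fixed points whose Brauer image $\operatorname{Br}_D(i)$ lies in $b_D$. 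Then $BiB=B$, so $B$ and the source algebra $A:=iBi$ are Morita equivalent, and $A$ carries the structure of an interior $D$-algebra via $d\mapsto di=id$.

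The heart of the argument is the structure theorem: if $B$ is nilpotent, then
\[A\cong\operatorname{End}_{\mathcal{O}}(V)\otimes_{\mathcal{O}}\mathcal{O}D\]
as interior $D$-algebras, where $V$ is an endopermutation $\mathcal{O}D$-module with vertex $D$. Granting this, the proof concludes formally. Forgetting the $D$-action, $V$ is $\mathcal{O}$-free of some rank $n$, so $\operatorname{End}_{\mathcal{O}}(V)\cong\operatorname{Mat}_n(\mathcal{O})$ as $\mathcal{O}$-algebras and hence $A\cong\operatorname{Mat}_n(\mathcal{O}D)$ is Morita equivalent to $\mathcal{O}D$; composing with the equivalence $B\sim A$ shows that $B$ is Morita equivalent to $\mathcal{O}D$, and reduction modulo $p$ gives $B\cong(FD)^{n\times n}$. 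Since Morita equivalence preserves the number of isomorphism classes of simple modules, $l(B)=l(FD)=1$ because $FD$ is local. Because the equivalence is defined over $\mathcal{O}$, extending scalars to $K$ yields $\operatorname{Mat}_n(KD)$, whence $k(B)=k(KD)=k(D)$.

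I would establish the structure theorem by induction on $|D|$ using Puig's local theory of pointed groups. The key feature of nilpotency is that $\N_G(Q,b_Q)/\C_G(Q)$ is a $p$-group for every subpair $(Q,b_Q)\le(D,b_D)$, equivalently that the fusion system $\FC_D(B)$ is trivial. For a nontrivial subpair one passes to a Brauer correspondent block of $\C_G(Q)$ (or of $\N_G(Q,b_Q)$); such a block is again nilpotent and has a strictly smaller defect group, so by induction its source algebra already has the desired tensor form and thereby produces an endopermutation module over a proper section of $D$. The triviality of the automizers forces the associated \emph{multiplicity modules} to have rank one, so these locally defined endopermutation modules are essentially canonical and can be compared across subpairs.

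The main obstacle is the gluing step: one must show that the locally defined endopermutation modules are mutually compatible and patch together into a single endopermutation $\mathcal{O}D$-module $V$ of vertex $D$ that actually realizes the global source algebra $A$. This is precisely where the hypothesis of trivial fusion is indispensable, since a nontrivial automizer would generically create a genuine (Clifford-theoretic) twist and a nonsplit interior algebra. The K\"ulshammer--Puig theory of extensions of nilpotent blocks, together with the vanishing of the relevant cohomological obstruction under trivial fusion, is the technical device that controls this; verifying that the glued module has vertex $D$ and reproduces $A$ on the nose is the most delicate part of the induction.
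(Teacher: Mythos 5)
The paper itself offers no proof of this theorem -- it is quoted as a deep result of Puig (Invent.\ Math.\ 93, 1988) -- so there is nothing internal to compare against. Measured against the actual literature proof, your reduction is the right one and is carried out correctly: identifying $B$ with its source algebra $A=iBi$ via $BiB=B$, invoking the structure theorem $A\cong\operatorname{End}_{\mathcal{O}}(V)\otimes_{\mathcal{O}}\mathcal{O}D$ for an endopermutation module $V$ of vertex $D$, and then reading off the Morita equivalence, $l(B)=1$ from locality of $FD$ (so that projectives are free and the Morita equivalence really forces the matrix-algebra form $(FD)^{n\times n}$), and $k(B)=k(D)$ by base change to $K$. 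Your remark that the character count must be performed over $\mathcal{O}$ rather than $F$ is exactly right and is a point often glossed over.

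The genuine gap is that the structure theorem is the theorem: essentially all of Puig's difficulty is concentrated in the step you defer, and the inductive scheme you sketch for it does not work as stated. You propose induction on $|D|$ via Brauer correspondents $b_Q$ having ``strictly smaller defect group,'' but by the local analysis recorded in this paper (\autoref{local}), $b_Q$ has defect group $Q\C_D(Q)$, which equals $D$ whenever $Q\le\Z(D)$; so the defect group does not drop and your induction parameter fails precisely in the cases one must treat first. The actual argument inducts on $|G|$ (or $|G:\Z(G)|$), passing to $\C_G(z)$ for $z\in\Z(D)$, to dominated blocks of $\C_G(Q)Q/Q$, and to normal subgroups of index $p$, and the gluing of the locally defined endopermutation modules into a single $V$ of vertex $D$ rests on the K\"ulshammer--Puig extension theory and a nontrivial cohomological vanishing that you name but do not establish. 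As written, your text is a correct and well-informed roadmap of Puig's proof with the formal corollaries verified, but the core of the theorem remains a black box.
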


\begin{Thm}[\textsc{Fong--Reynolds}]
Let $b$ be a block of $N\unlhd G$ with inertial group $G_b$. Then the Brauer correspondence $C\mapsto C^G$ gives a bijection between the blocks of $G_b$ covering $b$ and the blocks of $G$ covering $b$. Moreover, $C$ and $C^G$ are Morita equivalent and have the same fusion system. 
\end{Thm}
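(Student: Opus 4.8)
The plan is to realise both sides as the blocks of two explicitly related algebras and then run ordinary Clifford theory of the idempotent $b$. First I would form the orbit idempotent $e:=\sum_g{}^{g}b$, summing over a transversal $g_1=1,g_2,\dots,g_t$ of $G/G_b$ (so $t=[G:G_b]$). Since $b\in\Z(FN)$ is $G_b$-invariant and $G$ permutes the pairwise orthogonal conjugates ${}^{g_i}b$, the element $e$ is a central idempotent of $FG$, and a block $C$ of $G$ covers $b$ exactly when $Ce\neq0$, i.e.\ when $C$ is a summand of $FGe$. On the other side, because $N\unlhd G_b$ and $b$ is $G_b$-invariant, $b$ is a \emph{central} idempotent of $FG_b$; hence $FG_bb$ is a sum of blocks of $G_b$, namely precisely those covering $b$.

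The heart of the argument is the identity $bFGb=FG_bb$. Indeed, for $g\in G$ one has $bgb=g({}^{g^{-1}}b)b$, and $({}^{g^{-1}}b)b$ equals $b$ or $0$ according as $g\in G_b$ or not, since distinct conjugates of a block are orthogonal. Writing $b_i:={}^{g_i}b$, so that $e=\sum b_i$ and $b_iFGb_j=g_i(bFGb)g_j^{-1}=g_i(FG_bb)g_j^{-1}$, I would then check that $g_ixg_j^{-1}\mapsto xE_{ij}$ (for $x\in FG_bb$) is an $F$-algebra isomorphism $FGe\xrightarrow{\sim}(FG_bb)^{t\times t}$; matrix multiplication matches because $g_j^{-1}g_k$ meets $bFGb$ nontrivially only for $j=k$. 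A matrix algebra being Morita equivalent to its base ring, this immediately yields the Morita statement: the factor of $FGe$ corresponding to a block $C$ of $G_b$ is $C^{t\times t}$, which is a block of $G$ and is Morita equivalent to $C$. Passing to centres gives $\Z(FG)e\cong\Z(FG_bb)$, hence a bijection of primitive central idempotents, i.e.\ between blocks of $G$ covering $b$ and blocks of $G_b$ covering $b$. It remains to identify this bijection with block induction $C\mapsto C^G$, which I would do by comparing how the composite $\Z(FG)\xrightarrow{z\mapsto ze}\Z(FG)e\cong\Z(FG_bb)$ and the Brauer/transfer map defining $C^G$ act on class sums and central characters.

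Finally the fusion systems. I would fix a maximal $C$-subpair $(D,c_D)$ inside $G_b$ and argue that a defect group of $C^G$ can be conjugated into $G_b$, where it becomes a defect group of $C$, so that $D$ serves as a common defect group and $(D,c_D)$ (now read as a $C^G$-subpair via the same correspondence applied inside $\C_G(D)$) is maximal for $C^G$. For $Q\le D$ the inclusions $(Q,b_Q)\le(D,b_D)$ then transport between $C$ and $C^G$, so it suffices to show that $G$-fusion of these subpairs is already realised in $G_b$. This is the one genuinely block-theoretic point and the main obstacle: a $C^G$-subpair remembers $b$, since its second component covers a $G_b$-conjugate of $b$, so any $g\in G$ carrying one subpair to another must fix the relevant conjugate of $b$ and hence, modulo the appropriate centraliser, lies in $G_b$. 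Granting this, $\Hom_{\FC_D(C^G)}(S,T)=\Hom_{\FC_D(C)}(S,T)$ for all $S,T\le D$, whence $\FC_D(C^G)=\FC_D(C)$, completing the proof.
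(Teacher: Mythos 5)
The paper states Fong--Reynolds without proof, so there is no in-paper argument to compare against; I can only assess your proposal on its own terms. Your first two paragraphs are the standard and correct proof of the bijection and the Morita equivalence: the identity $bFGb=FG_bb$ via orthogonality of distinct conjugates of $b$, the decomposition $FGe=\sum_{i,j}b_iFGb_j$, and the resulting isomorphism $FGe\cong(FG_bb)^{t\times t}$ all check out, and the remaining identification of the induced bijection on primitive central idempotents with block induction $C\mapsto C^G$ is indeed the routine comparison of $\omega_{C^G}$ with $\omega_C$ composed with the truncation $\Z(FG)\to\Z(FG_b)$ on class sums. This also delivers a common defect group for $C$ and $C^G$.

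The fusion-system paragraph, however, contains a genuine gap, which you have honestly flagged but not closed. The key assertion --- that a $C^G$-subpair ``remembers $b$'' because ``its second component covers a $G_b$-conjugate of $b$'' --- does not parse as stated: the second component $e_Q$ is a block of $Q\C_G(Q)$, and $N$ is in general not contained in $Q\C_G(Q)$, so $e_Q$ covers no conjugate of $b$ in the technical sense; the only thing it visibly remembers is $e_Q^G=C^G$, which is shared by \emph{all} subpairs under $(D,e_D)$ and so imposes no constraint on a conjugating element $g$. What is actually needed is (a) a correspondence between $C$-subpairs $(Q,c_Q)$, with $c_Q$ a block of $Q\C_{G_b}(Q)$, and $C^G$-subpairs $(Q,e_Q)$, with $e_Q$ a block of the larger group $Q\C_G(Q)$ --- essentially a Fong--Reynolds/Harris--Kn\"orr type correspondence over the normal subgroup $\C_N(Q)\unlhd Q\C_G(Q)$ relative to a block of $\C_N(Q)$ attached to $b$ --- together with (b) a verification that this correspondence is compatible with the relation $\unlhd$ on subpairs, which is itself defined through Brauer correspondents in intermediate groups; only then can one conclude that conjugations between subpairs under the fixed maximal subpair are realized in $G_b$. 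This is precisely the content of the Kessar--Stancu reduction theorem cited in the bibliography, and it takes real work there; it does not follow from the orthogonality argument that powers the first half of your proof.
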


\begin{Thm}[Second Fong Reduction]\label{Fong2}
Let $B$ be a block of $G$ covering a $G$-invariant block of $N\unlhd G$ with defect $0$. Then $B$ is Morita equivalent to a block of a finite group $H$ with the same fusion system. Moreover, there exists a cyclic $p'$-subgroup $Z\le\Z(H)$ such that $H/Z\cong G/N$. 
\end{Thm}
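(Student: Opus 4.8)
The plan is to peel $B$ down to a twisted group algebra of $G/N$ and then to realise that twist inside a genuine finite group. First I would record that the block idempotent $e\in\Z(FN)$ of $b$ is in fact central in $FG$: it is $N$-central by definition and $G$-invariant by hypothesis, so it commutes with a transversal of $N$ in $G$, and consequently $B$ is a direct factor of the algebra $FGe$. Since $b$ has defect $0$, the algebra $b=FNe$ is simple, so $b\cong F^{n\times n}\cong\operatorname{End}_F(S)$ for the unique simple $b$-module $S$ with $n=\dim_F S$, and the $G$-invariance of $e$ forces $S$ to be $G$-invariant as an $FN$-module.

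Next I would exploit Clifford theory. Because $S$ is $G$-invariant I can choose, for each $g\in G$, an invertible $\rho(g)\in\operatorname{End}_F(S)$ implementing the conjugation action ${}^gn$ on $S$; this gives a projective representation of $G$ that is honest on $N$, with $\rho(g)\rho(h)=\alpha(g,h)\rho(gh)$ for a cocycle inflated from some $\bar\alpha\in\cohom^2(G/N,F^\times)$. The expected outcome of the standard crossed-product computation is an isomorphism onto a matrix algebra over the twisted group algebra,
\[FGe\;\cong\;\bigl(F_{\bar\alpha}[G/N]\bigr)^{n\times n},\qquad ge\mapsto\rho(g)\otimes\overline{g}\]
(up to the usual cocycle bookkeeping in the choice of representative $\bar\alpha$). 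This already yields the Morita equivalence: $B$ matches a block $C$ of $F_{\bar\alpha}[G/N]$ with $B\cong C^{n\times n}$.

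To turn the twisted group algebra into a block of a genuine group, I would use that $\operatorname{char}F=p$ makes $F^\times$ uniquely $p$-divisible, hence $p'$-torsion, so that $H^2(G/N,F^\times)$ is annihilated by the $p'$-part of $|G/N|$ and $[\bar\alpha]$ has order $m$ coprime to $p$; I then represent it by a cocycle valued in the group $Z\le F^\times$ of $m$-th roots of unity. This cocycle defines a central extension $1\to Z\to H\to G/N\to 1$ with $Z\le\Z(H)$ cyclic of $p'$-order and $H/Z\cong G/N$, exactly as required. Since $p\nmid m$, the idempotents $e_\lambda=\tfrac1m\sum_{z\in Z}\lambda(z)^{-1}z$ ($\lambda\in\Irr(Z)$) are central in $FH$, and I expect $FHe_\lambda\cong F_{\bar\alpha}[G/N]$ for the faithful $\lambda$ matching the extension; thus $C$ becomes a block of $FH$ and $B\cong C^{n\times n}$ is Morita equivalent to it.

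The hard part will be matching the fusion systems, because $B\cong C^{n\times n}$ is transparent only as an isomorphism of $F$-algebras, whereas $\FC_D(B)$ depends on the \emph{interior} algebra structure. Here I would first invoke Knörr's theorem to choose a defect group $D$ of $B$ with $D\cap N$ a defect group of $b$; as $b$ has defect $0$ this gives $D\cap N=1$, so the quotient map embeds $D$ into $G/N$. The decisive observation is that the twist disappears on $p$-subgroups: for a finite $p$-group $\bar D$ the coefficient module $F^\times$ is uniquely $p$-divisible, whence $\cohom^2(\bar D,F^\times)=0$ and $\bar\alpha|_{\bar D}$ is a coboundary. This lets me normalise $\rho$ to an honest representation of $D$, identify the image of $D$ in $H$ with a genuine $p$-subgroup $\hat D\cong D$ mapping isomorphically onto $DN/N$, and argue that the matrix factor $F^{n\times n}$ contributes nothing $p$-locally. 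Tracking Brauer pairs through the interior-algebra identification $FGe\cong(FHe_\lambda)^{n\times n}$ should then give a fusion-compatible bijection between the $B$-subpairs and the $C$-subpairs. Verifying that this bijection genuinely respects the subpair order $\le$ of \autoref{deffusbl}, and hence transports the whole fusion system, is the step I expect to require the most care.
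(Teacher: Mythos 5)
The paper states \autoref{Fong2} without proof --- it is quoted as a known result (Fong; Külshammer--Puig; Kessar--Stancu, who is cited in the bibliography precisely for the fusion-system statement) --- so there is no in-text argument to measure you against. The remark immediately after the theorem, identifying the block of $H$ with a twisted group algebra $F_\alpha[G/N]$, confirms that your route is the intended one, and your algebra-structure analysis is the standard proof and is correct: $e$ is central in $FG$ because it is $G$-invariant; $FNe\cong\operatorname{End}_F(S)$ is simple since $b$ has defect $0$; Clifford theory gives the projective representation $\rho$ and the crossed-product decomposition $FGe\cong\operatorname{End}_F(S)\otimes_F F_{\bar\alpha}[G/N]$; unique $p$-divisibility of $F^\times$ in characteristic $p$ kills the $p$-torsion of $\cohom^2(G/N,F^\times)$, and the exact sequence $1\to\mu_m\to F^\times\xrightarrow{\,m\,}F^\times\to 1$ lets you realise $\bar\alpha$ by a $\mu_m$-valued cocycle, producing the central extension $H$ with cyclic $p'$-subgroup $Z\le\Z(H)$ and $FHe_\lambda\cong F_{\bar\alpha}[G/N]$. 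The facts that $D\cap N=1$ (Knörr) and that $\rho|_D$ can be normalised to an honest representation are also right.

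The only place where your text is a plan rather than a proof is, as you yourself flag, the preservation of the fusion system, and here you should name the mechanism instead of hoping that ``tracking Brauer pairs'' works out. The key point is that $FNe$ is a direct summand of $FN$, which is a permutation $FD$-module for the conjugation action; since $D$ is a $p$-group this makes $\operatorname{End}_F(S)=FNe$ a permutation $FD$-module, i.\,e. $S$ with the normalised $D$-action is an endopermutation module. Consequently the Brauer construction is multiplicative across the tensor decomposition, $\operatorname{Br}_Q(FGe)\cong\operatorname{Br}_Q(FNe)\otimes_F\operatorname{Br}_Q(FHe_\lambda)$ for $Q\le D$, and $\operatorname{Br}_Q(FNe)$ is a matrix algebra or zero --- it is \emph{not} true that the factor $F^{n\times n}$ simply vanishes $p$-locally, only that it is Morita-trivial there, which is what makes the $B$-subpairs over $Q$ biject with the $C$-subpairs over the image of $Q$ compatibly with the order $\le$ of \autoref{deffusbl}. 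Equivalently and more efficiently: the Morita bimodule has endopermutation source, and Puig's theorem that basic Morita equivalences preserve defect groups and fusion systems finishes the argument; this is exactly how Külshammer--Puig and Kessar--Stancu close it. So the missing step is a citation (or a page of Brauer-construction bookkeeping), not a wrong turn.
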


The block of $H$ in the situation of \autoref{Fong2} is Morita equivalent to a twisted group algebra $F_\alpha[G/N]$ where $\alpha\in\cohom^2(G/N,F^\times)$. Conversely, every such twisted group algebra is Morita equivalent to a block of a suitable central extension. If $B$ is the principal block or if $G/N$ has trivial Schur multiplier, then $\alpha=1$ and $B$ is Morita equivalent to $F[G/N]$. This applies also to the following two theorems.

\begin{Thm}[\textsc{Külshammer}]
If $D\unlhd G$, then $B$ is controlled and Morita equivalent to a twisted group algebra $F_\alpha[D\rtimes\Out_\FC(D)]$ where $\alpha\in\cohom^2(\Out_\FC(D),F^\times)$. 
\end{Thm}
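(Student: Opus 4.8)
The plan is to reduce, by Clifford theory, to the case where the Brauer correspondent $b_D$ of $B$ in $L:=D\C_G(D)$ is $G$-invariant, to show that $b_D$ is then nilpotent, and finally to realise $B$ as a crossed product of the nilpotent block $b_D$ with the inertial quotient $E:=\Out_\FC(D)$. First I would note that $L=D\C_G(D)\unlhd G$, since $\C_G(D)\unlhd\N_G(D)=G$. A maximal $B$-subpair $(D,b_D)$ singles out a block $b_D$ of $L$ (of defect group $D$) covered by $B$. Applying the Fong--Reynolds theorem to $L\unlhd G$, I replace $G$ by the stabiliser $\N_G(D,b_D)=G_{b_D}$ and $B$ by its Fong--Reynolds correspondent; this changes neither the Morita equivalence class nor the fusion system, so I may assume $b_D$ is $G$-invariant and $G/L\cong E$. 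Here $E=\Out_\FC(D)$ is a $p'$-group by saturation (\autoref{puig}).

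In this reduced situation controlled-ness is immediate: since $b_D$ is $G$-invariant, every $g\in G$ satisfies ${}^g(D,b_D)=(D,b_D)$, so conjugation $c_g$ lies in $\Aut_\FC(D)$. Any morphism $\phi\in\Hom_\FC(S,T)$ is of the form $\phi=(c_g)|_S$ for a suitable $g\in G$ realising the relevant subpair inclusion, and hence extends to $c_g\in\Aut_\FC(D)$. Thus $D\unlhd\FC$, so $\FC$ has no essential subgroups and is controlled; by Fong--Reynolds the same holds for the original $B$. Moreover, by the argument in \autoref{excontrol}, Schur--Zassenhaus gives a $p'$-complement $A\cong E$ to $\Inn(D)$ in $\Aut_\FC(D)$, hence a genuine action $E\hookrightarrow\Aut(D)$ and a model $D\rtimes E$ for $\FC$.

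Next I would prove that $b_D$ is nilpotent. For $Q\le D$ the local automorphism group $\Aut_{\FC_D(b_D)}(Q)$ is a section of $\N_L(Q)/\C_L(Q)$. Since $\C_G(D)$ centralises $D\supseteq Q$, one computes $\N_L(Q)=\N_D(Q)\C_G(D)$ and $\C_G(D)\le\C_L(Q)$, so this section is a quotient of the $p$-group $\N_D(Q)/\C_D(Q)$. Hence $\Aut_{\FC_D(b_D)}(Q)$ is a $p$-group for every $Q\le D$, and the triviality criterion for saturated fusion systems forces $\FC_D(b_D)$ to be trivial, i.e. $b_D$ is nilpotent. Puig's structure theorem for nilpotent blocks then yields $b_D\cong(FD)^{n\times n}$, Morita equivalent to $FD$. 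Since $b_D$ is $G$-invariant, its block idempotent $e$ is central in $FG$, and a transversal of $L$ in $G$ exhibits $FGe=\bigoplus_t t\,b_D$ as a crossed product of $b_D\cong(FD)^{n\times n}$ with $G/L\cong E$; the block $B$ is one of its algebra summands.

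It then remains to transport this crossed-product structure through the Morita equivalence $b_D\sim FD$: the conjugation action of $E$ on $(FD)^{n\times n}$ should descend to an action of $E$ on $FD$ that is inner-by-$(E\to\Aut(D))$, with the failure to be an honest action recorded by a class $\alpha\in\cohom^2(E,F^\times)$, so that $B\sim F_\alpha[D\rtimes E]$. This last step is the main obstacle: one must show the $b_D$--$FD$ equivalence can be chosen $E$-equivariantly up to a genuine $2$-cocycle, and pin down $\alpha$. I expect this to require Puig's finer description of a nilpotent block as an interior $D$-algebra (its source algebra being $FD$) together with a Skolem--Noether/crossed-product analysis of how the $p'$-group $E$ permutes the unique simple $b_D$-module and its endomorphisms; by contrast, identifying which summand of the crossed product $F_\alpha[D\rtimes E]$ corresponds to $B$ is comparatively routine.
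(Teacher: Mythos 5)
The paper does not actually prove this theorem; it quotes it from Külshammer's 1985 paper on crossed products, so your attempt has to be measured against that argument. Everything up to your last paragraph is correct and is indeed the standard reduction: $L=D\C_G(D)\unlhd G$; Fong--Reynolds lets you assume $b_D$ is $G$-invariant with $G/L\cong E=\Out_\FC(D)$; the invariance of $(D,b_D)$ gives $D\unlhd\FC$, and since every essential subgroup would have to contain $\pcore_p(\FC)=D$ while $D$ itself is never essential, $\FC$ is controlled; and the computation $\N_L(Q)=\N_D(Q)\C_G(D)$ shows every $\Aut_{\FC_D(b_D)}(Q)$ is a $p$-group, so $b_D$ is nilpotent and $b_D\cong(FD)^{n\times n}$ by Puig's theorem. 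The crossed-product decomposition $FGe=\bigoplus_t t\,b_D$ is also correct.

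The step you defer, however, is not a technical afterthought: it is the theorem, and as written your proof has a genuine gap there. Concretely, what is missing is (a) the fact that any two complete systems of $n\times n$ matrix units in $(FD)^{n\times n}$ are conjugate by a unit (a Skolem--Noether type statement, valid because $FD$ is local, so finitely generated projective $FD$-modules are free), which lets you correct each $g\in G$ by a unit of $b_D$ so that conjugation fixes a chosen matrix subalgebra $M\cong F^{n\times n}$ elementwise; (b) the resulting tensor decomposition $FGe\cong M\otimes_F\C_{FGe}(M)$, in which $\C_{FGe}(M)$ is a crossed product of $\C_{b_D}(M)\cong FD$ with $E$; and (c) the normalisation of the resulting cocycle, which a priori takes values in $(FD)^\times=F^\times\times\bigl(1+J(FD)\bigr)$: here one uses that $1+J(FD)$ has $p$-power exponent while $E$ is a $p'$-group, together with Schur--Zassenhaus to split $1\to D\to\hat{G}\to E\to 1$, to land in $\cohom^2(E,F^\times)$ and obtain $\C_{FGe}(M)\cong F_\alpha[D\rtimes E]$. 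Until (a)--(c) are carried out you have only exhibited $B$ as a summand of a crossed product of a matrix algebra over $FD$ with $E$, which is weaker than the asserted Morita equivalence. (Once they are done, note that $E$ acts faithfully on $D$, so $F_\alpha[D\rtimes E]$ is indecomposable; hence $FGe$ is the single block $B$ and no summand needs to be identified.)
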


\begin{Thm}[\textsc{Külshammer}]
If $G$ is $p$-solvable, then $B$ is constrained and Morita equivalent to $F_\alpha H$ where $H$ is the model for $\FC_D(B)$ from \autoref{thmmodel} and $\alpha\in\cohom^2(H,F^\times)$.
\end{Thm}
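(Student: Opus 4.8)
The plan is to push $B$ through the two Fong reductions until the ambient group has trivial $p'$-core, and then to recognise the resulting group as the model by invoking the uniqueness clause of \autoref{thmmodel}. I would argue by induction on $|G|$. Write $N:=\pcore_{p'}(G)$ and fix a block $b$ of $N$ covered by $B$; since $N$ is a $p'$-group, $b$ has defect zero. The guiding principle is that each reduction preserves both the Morita equivalence class of $B$ and its fusion system $\FC_D(B)$, and that the model of a constrained fusion system depends only on the fusion system --- so the target $H$ never changes along the way.

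First I would dispose of the case where $b$ is not $G$-invariant. Then its inertial group $G_b$ is a proper, again $p$-solvable subgroup, and the Fong--Reynolds theorem provides a block $B'$ of $G_b$ with $(B')^G=B$ that is Morita equivalent to $B$ and satisfies $\FC_D(B')=\FC_D(B)$. Applying the induction hypothesis to $B'$ yields $B'\sim F_\alpha H$ with $H$ the model of $\FC_D(B')=\FC_D(B)$, and composing Morita equivalences gives the claim for $B$, with constrainedness transferring along the equal fusion systems.

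The substantial case is when $b$ is $G$-invariant. Here $b$ is a $G$-invariant defect-zero block of $N\unlhd G$, so the second Fong reduction (\autoref{Fong2}) replaces $B$ by a block $\tilde B$ of a finite group $\tilde G$ with the same fusion system and defect group $D$, together with a central cyclic $p'$-subgroup $Z\le\Z(\tilde G)$ with $L:=\tilde G/Z\cong G/N$. By the remark following \autoref{Fong2}, $\tilde B$, and hence $B$, is Morita equivalent to the twisted group algebra $F_\alpha L$ for a class $\alpha\in\cohom^2(L,F^\times)$ extracted from this central extension. Now I would identify $L$ with the model. Since $L\cong G/\pcore_{p'}(G)$ is a $p$-solvable quotient, the Hall--Higman lemma gives $\C_L(\pcore_p(L))\le\pcore_p(L)$, while $\pcore_{p'}(L)=1$. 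Thus $L$ has a unique block, its principal block, so $D\in\Syl_p(L)$ and $\FC_D(L)=\FC_D(B)$; as $Z$ is a central $p'$-group it does not affect fusion, so this equality indeed transports the fusion system of $\tilde B$ down to $L$, and $\FC_D(B)$ is constrained. Finally $L$ satisfies all four requirements on the model for $\FC_D(B)$ --- $D\in\Syl_p(L)$, $\FC_D(B)=\FC_D(L)$, $\pcore_{p'}(L)=1$ and $\C_L(\pcore_p(L))\le\pcore_p(L)$ --- so the uniqueness in \autoref{thmmodel} forces $L=H$ and $B\sim F_\alpha H$.

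The hard part will be the bookkeeping in the $G$-invariant case: I must confirm that modding out the central $p'$-group $Z$ leaves the defect group and the fusion system untouched and produces exactly the cocycle $\alpha$, and then verify all four model-theorem hypotheses simultaneously so that the uniqueness clause can be applied to pin down $L=H$. The remaining ingredients --- stability of the Morita equivalence class and of the fusion system under both Fong reductions, and $p$-constrainedness --- are supplied respectively by the cited reductions and by the Hall--Higman lemma.
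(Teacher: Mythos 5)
The paper states this theorem without proof (it is a black-box citation of Külshammer), so there is no internal argument to compare against; your two-step reduction via Fong--Reynolds and \autoref{Fong2}, with induction on $|G|$ in the non-invariant case, is indeed the standard route, and the bookkeeping in that part is correct (models depend only on the fusion system, Morita equivalences compose, constrainedness transfers along equal fusion systems).

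There is, however, a genuine gap at the decisive step. After \autoref{Fong2} you have a block $\tilde{B}$ of $\tilde{G}$ with central cyclic $p'$-subgroup $Z$ and $\tilde{G}/Z\cong L:=G/\pcore_{p'}(G)$, and $\tilde{B}$ lives in the summand $F\tilde{G}e_\lambda\cong F_\alpha L$ of $F\tilde{G}$ for a \emph{faithful} $\lambda\in\Irr(Z)$. Your justification for ``$D\in\Syl_p(L)$ and $\FC_D(L)=\FC_D(B)$'' is that $L$ is $p$-constrained with $\pcore_{p'}(L)=1$ and hence has a unique block; but that statement concerns $FL$, i.e.\ the summand with $\lambda=1$, and says nothing about $\tilde{B}$ unless $\alpha=1$. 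A block of $\tilde{G}$ lying over a non-trivial character of the central $p'$-group $Z$ does not dominate any block of $F L$ at all, so ``transporting the fusion system down to $L$'' through block domination is not available here. To close the gap you need two separate inputs: first, Fong's theorem that the defect group of a block of a $p$-solvable group covering a $G$-invariant block of $\pcore_{p'}(G)$ is a full Sylow $p$-subgroup (this is \emph{not} contained in \autoref{Fong2}, which only preserves $D$); and second, an argument that for every $Q\le D$ the Brauer correspondent $b_Q$ of $\tilde{B}$ in $Q\C_{\tilde{G}}(Q)$ is the unique block of that subgroup lying over $\lambda$ (these local subgroups again contain $Z$ centrally and are $p$-constrained modulo $Z$), which forces $\FC_D(\tilde{B})=\FC_D(\tilde{G})\cong\FC_D(L)$. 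Only with both facts in hand do all hypotheses of \autoref{thmmodel} hold for $L$, allowing the uniqueness clause to identify $L$ with $H$. This identification is the real content of the theorem; as written, your proposal assumes it rather than proves it.
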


\begin{Thm}[\textsc{Eaton--Kessar--Külshammer--Sambale}]
Every $2$-block $B$ with a metacyclic defect group $D$ belongs to one of the following cases:
\begin{enumerate}[(1)]
\item $B$ is nilpotent.
\item $D$ is dihedral, semidihedral or quaternion and $B$ has \emph{tame} representation type \textup(Morita equivalence classes classified up to scalars\textup).
\item $D\cong C_{2^n}^2$ and $B$ is Morita equivalent to $F[D\rtimes C_3]$.
\item $D\cong C_2^2$ and $B$ is Morita equivalent to $B_0(A_5)$.
\end{enumerate}
\end{Thm}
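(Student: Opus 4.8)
The overall strategy is to separate a soft fusion-theoretic reduction from a hard Morita-equivalence classification: fusion theory cuts the problem down to four families of defect groups and pins down the inertial quotient, but identifying the actual Morita classes requires heavy external input. First I would invoke the classification of fusion systems on metacyclic $2$-groups recorded in \autoref{excontrol}: every metacyclic $2$-group other than $D_{2^n}$, $Q_{2^n}$, $SD_{2^n}$ and $C_{2^n}^2$ is fusion-trivial. Hence if $D$ is none of these four groups, $\FC_D(B)$ is trivial, so $B$ is nilpotent and Puig's theorem on nilpotent blocks places it in case~(1). A nilpotent block with one of the four exceptional defect groups is likewise put into case~(1). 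From now on I may assume that $\FC_D(B)$ is non-trivial and that $D\in\{D_{2^n},Q_{2^n},SD_{2^n},C_{2^n}^2\}$.

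\textbf{The tame cases.} When $D$ is dihedral, semidihedral or (generalised) quaternion, $B$ has tame representation type. Here I would appeal to Erdmann's classification of the basic algebras---quivers with relations---of tame blocks, combined with the character-theoretic work of Brauer and Olsson fixing the invariants $k(B)$, $l(B)$ and the inertial quotient. This produces the finite list of Morita classes of case~(2); the ``up to scalars'' proviso reflects precisely that Erdmann's method leaves certain structure constants in the relations undetermined. I take this classification as known input.

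\textbf{The abelian case.} Suppose $D\cong C_{2^n}^2$. Being abelian, $D$ has no essential subgroups (\autoref{exessentail}), so $\FC_D(B)$ is controlled and is determined by the inertial quotient $E:=\Out_\FC(D)=\Aut_\FC(D)\le\Aut(D)\cong\GL(2,\ZZ/2^n\ZZ)$, which is a $2'$-group. A direct count gives $|\GL(2,\ZZ/2^n\ZZ)|=2^{4n-3}\cdot 3$, so its odd part equals $3$ and $E$ is conjugate to $1$ or to $C_3$. The choice $E=1$ is the nilpotent case, so I may assume $E\cong C_3$, whence $\FC_D(B)=\FC_D(D\rtimes C_3)$.

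\textbf{Morita classification and the main obstacle.} It remains to determine the Morita classes realising the controlled fusion system with $E\cong C_3$. For $n\ge 2$ I would show there is a single class $F[D\rtimes C_3]$ (case~(3)), while for $n=1$, i.e. $D\cong C_2^2$, the block is Morita equivalent either to $F[C_2^2\rtimes C_3]=F[A_4]$ (case~(3)) or to $B_0(A_5)$ (case~(4)), these two classes being distinct, e.g. by their Cartan and decomposition matrices. This last step is the main obstacle. Both blocks $F[A_4]$ and $B_0(A_5)$ carry the \emph{same} fusion system, so the fusion system alone cannot decide between them; proving that these are exactly the classes that occur---and that no further class appears for $n\ge 2$---relies on the classification of finite simple groups, through the determination of all blocks with Klein four defect group, together with a separate argument ruling out an $A_5$-type analogue once $n\ge 2$. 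By comparison the fusion bookkeeping in the earlier steps is routine.
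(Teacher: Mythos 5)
The paper does not prove this theorem; it is quoted as an imported classification result from the cited work of Eaton--Kessar--K\"ulshammer--Sambale, so there is no internal proof to compare against. Your outline nevertheless reconstructs the architecture of the published argument correctly: the soft reduction via the fusion-trivial classification of metacyclic $2$-groups (the author's own result recorded in \autoref{excontrol}) does cut the problem to the four families $D_{2^n}$, $Q_{2^n}$, $SD_{2^n}$, $C_{2^n}^2$, and your count $\lvert\GL(2,\ZZ/2^n\ZZ)\rvert=3\cdot 2^{4n-3}$ correctly forces the inertial quotient in the abelian case to be $1$ or $C_3$ (with no cohomological twist, since $C_3$ has trivial Schur multiplier, cf.\ K\"ulshammer's theorem on normal defect groups). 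You also locate the three genuinely hard inputs in the right places: Erdmann's quiver-with-relations classification together with Brauer--Olsson for the tame case, the CFSG-dependent classification of blocks with Klein four defect group for $n=1$, and the reduction to quasisimple groups for $C_{2^n}^2$ with $n\ge 2$. Since these are unavoidable black boxes --- the fusion system genuinely cannot separate $F[A_4]$ from $B_0(A_5)$, as you note --- your proposal is as complete a sketch as the statement admits without reproducing the cited papers.
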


\begin{Con}[Blockwise $\Z^*$-conjecture]\label{Z*con}
Let $B$ be a block with fusion system $\FC$ and $Z:=\Z(\FC)$. Then $B$ is Morita equivalent to its Brauer correspondent $b_Z$ in $\C_G(Z)$.
\end{Con}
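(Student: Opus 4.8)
The plan is to treat \autoref{Z*con} by the Clifford-theoretic reduction to quasisimple groups that underlies most local--global conjectures, after first clearing the cases already covered by the theorems above. If $Z=1$, then $\C_G(Z)=G$ and $b_Z=B$, so there is nothing to prove. If $B$ is nilpotent, then $\FC$ is trivial and $Z=\Z(D)$; it is known that the fusion system of $b_Z$ is $\C_\FC(Z)$, which equals $\FC$ by the defining property of $\Z(\FC)$, so $b_Z$ is nilpotent with defect group $D$ as well, and by Puig's nilpotent block theorem both $B$ and $b_Z$ are Morita equivalent to $FD$, hence to each other. The same identity $\C_\FC(Z)=\FC$ shows in general that $B$ and $b_Z$ share defect group and fusion system, so they already agree on every invariant a Morita equivalence must preserve, and the whole content lies in constructing the bimodule.

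First I would fix a minimal counterexample $(G,B)$ with $|G|$ minimal and perform the usual reductions, taking care that $\C_G(Z)$ and its Brauer correspondent transform compatibly at each step. Using Fong--Reynolds I would descend to the inertial group whenever $B$ fails to cover a $G$-stable block of a normal subgroup, invoking minimality; using the second Fong reduction (\autoref{Fong2}) I would strip off $G$-stable defect-zero blocks of normal subgroups; and combining these for $N=\pcore_{p'}(G)$ I would arrange $\pcore_{p'}(G)=1$. All of these replace $B$ by a Morita equivalent block with the same fusion system, so they are compatible with the conjecture, provided one upgrades ``Morita'' to a notion (splendid or Puig equivalence) that is transitive and compatible with passage to centralizers. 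After the reductions the generalized Fitting subgroup $\F^*(G)$ is a central product of quasisimple components permuted by $G$, and a further Clifford-theoretic analysis should confine the minimal counterexample to the case where $G$ is quasisimple modulo its center.

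The decisive and hardest step is the quasisimple case, which is inseparable from the classification of finite simple groups. For groups of Lie type in the defining characteristic the defect groups are Sylow or trivial and $\FC_D(B)=\FC_D(G)$, so the claim collapses onto the generalized $\Z^*$-theorem for $\C_G(Z)$; for the non-defining characteristic one would invoke the Jordan decomposition of blocks and the Morita equivalences coming from Deligne--Lusztig theory; and for alternating and sporadic groups one would read off $\FC_D(B)$ and $b_Z$ from the known block structure and verify the equivalence case by case. The main obstacle is exactly this: the required Morita (ideally splendid) equivalences for blocks of simple groups are not all available, the verification is CFSG-dependent, and one must guarantee that the equivalences produced in the reduction steps are of a type closed under composition and centralizer restriction. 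A secondary but intrinsic difficulty, already present in the reduction to $\pcore_{p'}(G)=1$, is that the gap between ``$Z$ central in $\FC$'' and ``$Z$ central in $G$'' is measured precisely by the $\Z^*$-theorem, whose only known proof for $p>2$ itself rests on CFSG.
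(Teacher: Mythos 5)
The statement you are asked to prove is labelled \emph{Conjecture} in the paper, and that is not a formality: the blockwise $\Z^*$-conjecture is an open problem, and the paper offers no proof. The only things the paper records about it are that it holds for principal blocks (where $\Z(\FC)=\Z(G/\pcore_{p'}(G))$ by the $\Z^*$-theorem, so the claim reduces to comparing $F[G/\pcore_{p'}(G)]$ with $F[\C_G(Z)/(\C_G(Z)\cap\pcore_{p'}(G))]$), and the partial numerical evidence of Külshammer--Okuyama and Watanabe that $k(B)\ge k(b_Z)$ and $l(B)\ge l(b_Z)$ with equality for abelian defect groups. Your proposal is therefore not, and cannot be compared against, a proof; it is a research programme, and to your credit you say so yourself in the final paragraph.

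Within that framing, your easy cases are sound: $Z=1$ is trivial, and for nilpotent $B$ the identification of the fusion system of $b_Z$ as $Z\C_\FC(Z)=\C_\FC(Z)=\FC$ via \autoref{local} together with Puig's theorem does give the Morita equivalence. But the body of the argument has two genuine gaps beyond the admitted quasisimple case. First, no reduction of this conjecture to quasisimple groups is actually established: for Morita-equivalence statements the Fong--Reynolds and second Fong reductions do not automatically commute with passage to $\C_G(Z)$ and with the Brauer correspondence, and upgrading to splendid or basic equivalences to make the steps composable is precisely where such reductions usually get stuck --- you name the issue but do not resolve it. Second, and more structurally, the step ``arrange $\pcore_{p'}(G)=1$ and then control $\F^*(G)$'' already conceals the full strength of the $\Z^*$-theorem (hence CFSG for $p>2$) even before any quasisimple analysis begins, since the discrepancy between $Z$ being central in $\FC$ and central in $G/\pcore_{p'}(G)$ is exactly what the conjecture must bridge for non-principal blocks, where no $\Z^*$-type theorem is known. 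So the proposal is a reasonable map of the territory, but every load-bearing step is either unproved or currently unprovable, consistent with the statement's status as a conjecture.
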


Since $\N_G(D,b_D)\le\C_G(Z)$, $b_Z$ is indeed the unique Brauer correspondent of $B$ by Brauer's first main theorem. 
\autoref{Z*con} holds for principal blocks by \autoref{exfoc}.

\begin{Thm}[\textsc{Külshammer--Okuyama}, \textsc{Watanabe}]
In the situation of \autoref{Z*con} we have $k(B)\ge k(b_Z)$ and $l(B)\ge l(b_Z)$ with equality in both cases if $D$ is abelian.
\end{Thm}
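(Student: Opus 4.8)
The plan is to exploit that $Z=\Z(\FC)$ is a central subgroup of the defect group and that $B$ and $b_Z$ turn out to share both their defect group and their fusion system. First I would record the structural setup. Since every inner automorphism of $D$ fixes $Z$ pointwise, we have $Z\le\Z(D)$, so $D\le H:=\C_G(Z)$ and $\C_G(D)\le H$. By Brauer's extended first main theorem $b_Z$ is the unique block of $H$ with $b_Z^G=B$, it has $D$ as a defect group, and—because $Z=\Z(\FC)$ forces $\C_\FC(Z)=\FC$ (\autoref{exfoc})—one gets $\FC_D(b_Z)=\C_\FC(Z)=\FC$. Thus $B$ and $b_Z$ are Brauer correspondents with the \emph{same} defect group and the \emph{same} saturated fusion system, which is the feature that makes the comparison possible at all.

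For the inequalities the key tool is Brauer's theory of generalized decomposition numbers. Writing $d^z_{\chi\phi}$ (for $\chi\in\Irr(B)$, $z\in Z$ and $\phi\in\IBr(b_z)$) for the generalized decomposition numbers attached to the $p$-elements of $Z$, Brauer's second main theorem identifies these local data with Brauer characters of centralizer blocks; since every $z\in Z$ is $\FC$-central, all of this information is governed by $b_Z$. The plan is to extract from the decomposition matrix of $b_Z$ a basic set that embeds, via the maps $d^z$, into the generalized decomposition data of $B$, forcing $l(b_Z)\le l(B)$; the orthogonality relations for the $d^z$ then feed into Brauer's subsection formula $k(B)=\sum_{(u,b_u)}l(b_u)$ (summed over $G$-classes of $B$-subsections) and the analogous formula for $b_Z$ over $H$ to yield $k(B)\ge k(b_Z)$. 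The delicate point—and the step I expect to be the main obstacle—is that passing from $H=\C_G(Z)$ up to $G$ \emph{coarsens} fusion, so that subsections which are distinct over $H$ may fuse over $G$ and the number of subsection classes tends to drop. The inequality survives only because this loss is compensated by the larger local invariants $l(b_u)$ in the bigger centralizers, and making this balance precise is exactly where the hypothesis $Z=\Z(\FC)$ (equivalently, that $\C_G(Z)$ already realizes all of the fusion) must be used in full force; this is the technical heart of the Külshammer--Okuyama argument.

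Finally, for the equality when $D$ is abelian I would appeal to the Fitting decomposition $D=\Z(\FC)\times\foc(\FC)$ (\autoref{exfoc}) together with Watanabe's construction of a \emph{perfect isometry} between $B$ and $b_Z$, in the spirit of Broué's abelian defect group conjecture. A perfect isometry is a bijection $\Irr(B)\to\Irr(b_Z)$ with strong integrality and self-duality properties that is compatible with the decomposition maps, and it preserves both $k$ and $l$; hence $k(B)=k(b_Z)$ and $l(B)=l(b_Z)$. This is weaker than—but consistent with—the Morita equivalence predicted by the blockwise $\Z^*$-conjecture (\autoref{Z*con}), and the genuinely hard input here is the existence of the isometry itself, which rests on the special structure of blocks with abelian defect group.
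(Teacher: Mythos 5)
The paper itself gives no proof of this theorem --- it is a survey statement imported from the unpublished Külshammer--Okuyama manuscript and from Watanabe's 1989 paper --- so your proposal can only be judged on its own terms, and on those terms it is a roadmap with the essential content missing. Your setup paragraph is correct and is exactly what the paper records after \autoref{Z*con}: $Z\le\Z(D)$, $b_Z$ is the unique Brauer correspondent of $B$ in $\C_G(Z)$ with defect group $D$ (because $\N_G(D,b_D)\le\C_G(Z)$), and $\FC_D(b_Z)=\C_\FC(Z)=\FC$ by \autoref{local} and \autoref{exfoc}. But the step you single out as the main obstacle --- that fusion of subsections ``coarsens'' in passing from $H=\C_G(Z)$ to $G$ --- is not an obstacle at all; it is ruled out by the very fact you just established. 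Since $\FC_D(b_Z)=\FC_D(B)=\FC$, Brauer's formula (\autoref{brauer}) for $k(B)$ and for $k(b_Z)$ runs over the \emph{same} index set $\mathcal{X}$ of $\FC$-classes of elements of $D$; nothing fuses further. The real content is the termwise comparison $l(b_x)\ge l(b_x')$, where $b_x$ lives in $\C_G(x)$ and $b_x'$ in $\C_G(x)\cap\C_G(Z)$, and since $Z$ is again central in the fusion system of $b_x$, this is another instance of the $l$-inequality for a centrally controlled block. So the $k$-inequality reduces, by induction on $|G|$, to the $l$-inequality --- which is precisely the statement your sketch never proves. ``Extract a basic set that embeds via the generalized decomposition maps'' is not an argument, and you concede this by deferring ``the technical heart'' to Külshammer--Okuyama. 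For orientation: their actual proof is not a count of decomposition numbers but a bimodule-theoretic transfer argument --- one shows that $b_Z$ is a direct summand of $b_ZBb_Z$ as a $(b_Z,b_Z)$-bimodule (this is where $Z\le\Z(\FC)$ enters, through a vertex computation for the remaining summands), and the induced maps on centers and on the Higman/projective ideals yield $k(b_Z)\le k(B)$ and $l(b_Z)\le l(B)$ simultaneously.

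The abelian case has the same defect in a purer form. You name the right mechanism --- the Fitting decomposition $D=\Z(\FC)\times\foc(\FC)$ from \autoref{exfoc} and Watanabe's perfect isometry between $B$ and $b_Z$, built from the Broué--Puig $*$-construction --- and it is true that a perfect isometry preserves $k$ and $l$. But the existence of that isometry \emph{is} Watanabe's theorem; citing it as ``the genuinely hard input'' leaves the equality case entirely unproved. As written, the proposal establishes the correct framing (same defect group, same fusion system, reduction to a local comparison) and defers everything that would make it a proof.
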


\begin{Con}[\textsc{Rouquier}]
If $Q:=\hyp(\FC_D(B))$ is abelian, then $B$ is derived equivalent to its Brauer correspondent $B_Q$ in $\N_G(Q)$.
\end{Con}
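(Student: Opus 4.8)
The plan is to treat this as a hyperfocal localisation in the spirit of Broué's abelian defect group conjecture: when $D$ is abelian we have $D'=1$, so by \autoref{exfoc} the relation $\foc(\FC)=\hyp(\FC)D'$ gives $\hyp(\FC)=\foc(\FC)$, and the statement becomes the analogue of Broué's conjecture with the defect group replaced by its hyperfocal subgroup $Q$ and with $\N_G(Q)$ in place of $\N_G(D)$. Accordingly I would aim not just for an abstract derived equivalence but for a \emph{splendid} (Rickard) equivalence: one induced by a bounded complex $C$ of $(B,B_Q)$-bimodules all of whose terms are $p$-permutation modules with vertices contained in the diagonal $\Delta Q\le Q\times Q$. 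Such a complex restricts compatibly to the Brauer correspondents at every subpair, which is precisely what lets local data be glued into a global equivalence along the fusion system $\FC$.

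The construction would proceed in two steps. \emph{First}, build a \emph{stable equivalence of Morita type} between $B$ and $B_Q$. The natural candidate bimodule is a suitable summand of the Scott module (equivalently, a Green correspondent) with respect to $\Delta Q$, and the abelian-ness of $Q$ is the hypothesis that forces the Brauer constructions at the subgroups $\Delta R$ for $R\le Q$ to collapse to honest Morita equivalences between the corresponding local blocks; here I would invoke the Fong--Reynolds theorem and \autoref{Fong2} to strip off $\pcore_{p'}$ and to reduce to a quasi-primitive situation in which the local blocks are understood. \emph{Second}, lift this stable equivalence to a derived equivalence. By Linckelmann's lifting criterion, a stable equivalence of Morita type between two symmetric algebras of this kind promotes to a derived equivalence provided it carries simple $B$-modules to objects of $B_Q$ that are again simple (up to the known controlled discrepancy), and the hyperfocal hypothesis is used to pin down $l(B)=l(B_Q)$ together with the relevant $\operatorname{Ext}$-groups so that this condition holds.

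For a general group $G$ I would then set up a reduction to quasisimple groups: Clifford theory together with the Fong--Reynolds theorem and \autoref{Fong2} reduces $B$ to a block that is quasi-primitive over all normal subgroups, after which an Aschbacher--O'Nan--Scott type analysis reduces to almost simple components and their covering groups. One would then verify the conjecture, in the strong splendid form needed for gluing, for each family of quasisimple groups: cyclic $Q$ via the Brauer-tree equivalences of Rickard and Linckelmann, groups of Lie type in non-defining characteristic via Deligne--Lusztig varieties and the Bonnafé--Rouquier machinery, and the remaining families case by case.

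The \textbf{main obstacle}, and the reason this remains a conjecture rather than a theorem, is exactly the passage from a \emph{stable} to a \emph{derived} equivalence: there is no uniform construction of the complex $C$, and Linckelmann's criterion must be checked by hand in each configuration. Even the first step is delicate, since producing a genuine stable equivalence — not merely a virtual identity on Grothendieck groups — already requires fine control of the $p$-permutation bimodules involved. A complete proof would therefore rest both on the CFSG and on a uniform verification across all quasisimple groups, neither of which is presently available; the conjecture is known only for special defect structures (cyclic $Q$, and several families with $Q\cong C_p^n$) and for principal blocks in many cases.
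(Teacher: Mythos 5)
This statement is presented in the paper as an open conjecture (Rouquier's conjecture): the paper gives no proof, and none is known. Your submission is accordingly a research programme rather than a proof, and you concede as much in your final paragraph — the passage from a stable equivalence of Morita type to a derived (splendid) equivalence is exactly the unresolved step, and nothing in your outline closes it. Within that caveat, the strategy you describe (a splendid complex of $p$-permutation bimodules, gluing of local Morita equivalences along the fusion system, reduction via Fong--Reynolds and \autoref{Fong2} to quasisimple groups, then case-by-case verification) is the one pursued in the literature for Broué's conjecture, so as a programme it is reasonable; but it cannot be accepted as a proof of the statement.

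Two concrete inaccuracies should also be flagged. First, the hypothesis is that the \emph{hyperfocal} subgroup $Q=\hyp(\FC_D(B))$ is abelian, not that the defect group $D$ is; your opening reduction via $D'=1$ and $\foc(\FC)=\hyp(\FC)D'$ only treats the special case of abelian $D$, which the paper's subsequent example already identifies as the case where Rouquier's conjecture follows from Broué's conjecture via the Fitting decomposition $D=Q\times\Z(\FC)$. The genuinely new content of the conjecture lies in the case of non-abelian $D$ with abelian $Q$ — for instance cyclic $Q$ with $D$ non-abelian metacyclic, the setting of \autoref{watanabe} — and your argument says nothing specific about it. Second, the terms of the conjectured complex should have vertices contained in the diagonal $\Delta D\le D\times D$, not in $\Delta Q$: since $Q$ is normal in $D$ we have $D\le\N_G(Q)$, and by \autoref{local} the Brauer correspondent $B_Q$ still has defect group $\N_D(Q)=D$, so a bimodule complex with all vertices inside $\Delta Q$ could not realize an equivalence between two blocks of defect $|D|>|Q|$. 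These points do not change the overall verdict — the statement remains open — but they would need to be repaired in any serious attempt along these lines.
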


\begin{Ex}
Suppose that $B$ has abelian defect group $D$. \emph{Broué's conjecture} predicts that $B$ and $B_Q$ are derived equivalent to their common Brauer correspondent in $\N_G(D)$. This implies Rouquier's conjecture for $B$. Conversely, if Rouquier's conjecture and the blockwise $\Z^*$-conjecture hold for $B$, then $B$ is derived equivalent to its Brauer correspondent in $\N_G(Q,b_Q)\cap\C_G(Z)=\N_G(D,b_D)$ since $D=Q\times Z$ by the Fitting decomposition (\autoref{exfoc}). Thus, Broué's conjecture holds for $B$.
\end{Ex}

\begin{Thm}[\textsc{Watanabe}]\label{watanabe}
If $Q$ is cyclic in the situation of Rouquier's conjecture, then $\FC$ is controlled with $\Out_\FC(D)\le C_{p-1}$ and
\begin{align*}
k(B)&=k(B_Q)=k(D\rtimes\Out_\FC(D)),\\
l(B)&=l(B_Q)=\lvert\Out_{\FC}(D)\rvert.
\end{align*}
\end{Thm}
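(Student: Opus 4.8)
The plan is to separate the two structural assertions ($\FC$ is controlled and $\Out_\FC(D)\le C_{p-1}$), which are purely fusion-theoretic, from the two numerical ones, which require block theory; the cyclicity of $Q=\hyp(\FC)$ drives everything.

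First I would show that a cyclic hyperfocal subgroup forbids essential subgroups. Suppose $R\le D$ were $\FC$-essential. Every element $\phi(x)x^{-1}$ with $x\in R$ and $\phi\in\pcore^p(\Aut_\FC(R))$ lies in $\hyp(\FC)=Q$, so the subgroup they generate, namely $[R,\pcore^p(\Aut_\FC(R))]$, is cyclic. On the other hand $L:=\Out_\FC(R)$ has a strongly $p$-embedded subgroup and, since $R$ is $\FC$-radical with $\pcore_p(L)=1$ (\autoref{exessentail}), acts faithfully on $V:=R/\Phi(R)$, the action of $\Aut_\FC(R)$ factoring through $L$. A short inspection of Bender's list (\autoref{bender}) shows that $\dim[V,\pcore^p(L)]\ge2$ in every case: a one-dimensional commutator would force $\pcore^p(L)$ either to act through the scalars $\FF_p^\times$ (leaving $L$ with a normal Sylow $p$-subgroup, hence no strongly $p$-embedded subgroup) or to involve a nonabelian simple section, which has no nontrivial one-dimensional $\FF_p$-representation. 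Thus $[R,\pcore^p(\Aut_\FC(R))]$ needs at least two generators, contradicting the cyclicity of $Q$. Hence there are no essential subgroups, and by \autoref{AFT} every morphism of $\FC$ extends to $\Aut_\FC(D)$; that is, $\FC$ is controlled.

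Next, writing $A\cong\Out_\FC(D)$ for the $p'$-complement of $\Inn(D)$ in $\Aut_\FC(D)$ supplied by Schur--Zassenhaus (\autoref{excontrol}), I would identify $Q=\hyp(\FC)=[D,A]$. By definition $A$ centralises $D/Q$, and any element of the coprime group $A$ centralising both $Q$ and $D/Q$ centralises $D$ and is therefore trivial; so $A$ acts faithfully on the cyclic group $Q$. As $\Aut(C_{p^n})$ is cyclic with $p'$-part $C_{p-1}$, this embeds $A$ into $C_{p-1}$, giving $\Out_\FC(D)\le C_{p-1}$. When $A\ne1$ we moreover get $\C_Q(A)=1$, so coprime action yields a complement $D=Q\rtimes\C_D(A)$ with $\C_D(A)\cong D/Q$; when $A=1$ we have $Q=1$, $B$ is nilpotent, and the theorem reduces to Puig's nilpotent block theorem.

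For the numerical statements I would pass to $N:=\N_G(Q)$ and its block $B_Q$. Since $Q\unlhd\FC$ and $D\le N$, the block $B_Q$ has defect group $D$ and fusion system $\N_\FC(Q)=\FC$, again controlled with inertial quotient $A$. Now $Q$ is a central cyclic subgroup of $C:=\C_G(Q)\unlhd N$ with $N/C\le\Aut(Q)$ cyclic, so after a Fong--Reynolds reduction to the stabiliser of the covered block of $C$, the situation is governed by a block carrying a normal cyclic $p$-subgroup on which $A$ acts faithfully. Quotienting by $Q$ turns $B_Q$ into a block with normal defect group $\C_D(A)\cong D/Q$ and nilpotent fusion but with the surviving $p'$-action of $A$; by Külshammer's theorem for normal defect groups it is Morita equivalent to $F_\alpha[(D/Q)\rtimes A]$, from which $l(B_Q)=|A|=\lvert\Out_\FC(D)\rvert$ and $k(B_Q)=k(D\rtimes A)$ follow by a direct count of $A$-orbits (exactly as for the Frobenius group $Q\rtimes A$ in the limiting case $D=Q$). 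The main obstacle is the final comparison $k(B)=k(B_Q)$ and $l(B)=l(B_Q)$, i.e. proving in this special case precisely what Rouquier's conjecture predicts. Here cyclicity of $Q$ is indispensable: it allows one to combine Brauer's extended first main theorem with the character theory of cyclic defect (a Brauer-tree/perfect-isometry argument between $B$ and $B_Q$) to match the $k$- and $l$-invariants \emph{exactly}, rather than merely up to the inequalities of the Külshammer--Okuyama--Watanabe theorem. I expect essentially all of Watanabe's technical work to be concentrated in this comparison; the fusion-theoretic steps above are comparatively routine.
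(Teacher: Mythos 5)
A preliminary remark: these lecture notes state Watanabe's theorem without proof, so there is no in-paper argument to measure your proposal against; I can only judge it against what the theorem actually requires. Your fusion-theoretic half is essentially sound, though the key claim needs repair. If $R$ were essential, set $L:=\Out_\FC(R)$ and $V:=R/\Phi(R)$; then $[V,\pcore^p(L)]$ is indeed a quotient of $[R,\pcore^p(\Aut_\FC(R))]\le\hyp(\FC)=Q$, hence cyclic. But your dichotomy for ruling out $\dim[V,\pcore^p(L)]\le 1$ is garbled: if $\pcore^p(L)$ acts through scalars on a one-dimensional commutator and trivially on $V/[V,\pcore^p(L)]$, the immediate conclusion is that $\pcore^p(L)$ embeds into $\FF_p^\times$ (the kernel of the scalar action is a normal $p$-subgroup of $L$, hence trivial by \autoref{exessentail}), so $L$ has a normal $p$-\emph{complement} --- not a normal Sylow $p$-subgroup --- and a $p$-nilpotent group can perfectly well have a strongly $p$-embedded subgroup ($S_3$ for $p=2$). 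The argument closes only after one extra step: since $\lvert\Aut(C_m)\rvert=\phi(m)<p$ for $m\mid p-1$, the cyclic $p'$-group $\pcore^p(L)$ is \emph{central} in $L$, so $L$ does have a nontrivial normal Sylow $p$-subgroup, contradicting $\pcore_p(L)=1$. With that insertion the step is correct and entirely elementary; no appeal to Bender's list (\autoref{bender}) is needed. The identification $Q=[D,A]$, the faithful coprime action of $A$ on the cyclic group $Q$, the embedding $\Out_\FC(D)\le C_{p-1}$, and the degenerate case $A=1$ (forced when $p=2$) are all fine.

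The block-theoretic half, which is where the substance of Watanabe's theorem lies, has genuine gaps. First, your route to $k(B_Q)$ and $l(B_Q)$ passes through a block of $\N_G(Q)/Q$ with \emph{normal} defect group, to which you apply Külshammer's theorem; but $D$ is not normal in $\N_G(Q)$ (only $Q$ is), and domination by $Q$ does not make $D/Q$ normal. Worse, since $A$ centralizes $D/Q$, the quotient fusion system $\FC/Q$ is \emph{trivial}, so the blocks of $\N_G(Q)/Q$ dominated by $B_Q$ are nilpotent with $l=1$ each; the equality $l(B_Q)=\lvert A\rvert$ has to come from showing that there are exactly $\lvert A\rvert$ such dominated blocks, which is a different and not routine count. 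Second, the central equalities $k(B)=k(B_Q)$ and $l(B)=l(B_Q)$ are not proved at all: you defer them to "the character theory of cyclic defect", but it is $Q$, not $D$, that is cyclic, so Brauer-tree methods do not apply to $B$. The established route runs through Brauer's formula (\autoref{brauer}): one analyses the subsections $(x,b_x)$ for $x\in Q$, where $b_x$ dominates a block of $\C_G(x)\langle x\rangle/\langle x\rangle$ whose hyperfocal subgroup is again cyclic, computes $l(b_x)$ by induction on $\lvert G\rvert$, and assembles a perfect isometry between $B$ and $B_Q$ from the generalized decomposition numbers. Your proposal correctly isolates where the difficulty sits, but leaves precisely that part open, so as it stands it proves only the structural assertions about $\FC$ and none of the four numerical equalities.
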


If $p>2$ and $D$ is non-abelian metacyclic, then \autoref{watanabe} applies.

\begin{Def}
Let $\FC$ be a saturated fusion system on $P$ and $Q\unlhd\FC$. Then the (saturated) fusion system $\FC/Q$ on $P/Q$ consists of the morphism $\phi\colon S/Q\to T/Q$ such that there exists a morphism $\psi\colon S\to T$ in $\FC$ with $\phi(xQ)=\psi(x)Q$ for all $x\in S$.
\end{Def}

\begin{Thm}\label{local}
Let $B$ be a block of $G$ with defect group $D$ and $\FC=\FC_D(B)$. Let $(Q,b_Q)$ be a $B$-subpair such that $Q$ is $\FC$-normalized.
Then
\begin{enumerate}[(i)]
\item $b_Q$ has defect group $Q\C_D(Q)$ and fusion system $Q\C_\FC(Q)$.
\item $b_Q^{\N_G(Q)}$ has defect group $\N_D(Q)$ and fusion system $\N_\FC(Q)$. 
\item $b_Q$ dominates a unique block $\overline{b_Q}$ of $\C_G(Q)Q/Q$ with defect group $\C_D(Q)Q/Q$ and fusion system $Q\C_\FC(Q)/Q$. Moreover, $l(b_Q)=l(\overline{b_Q})$. 
\end{enumerate}
\end{Thm}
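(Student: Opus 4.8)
The plan is to read each of the three local blocks off from the Brauer-pair ($=$ subpair) theory of Alperin--Broué and Olsson, and then to match its fusion system against the abstractly defined local subsystem. The crucial translation is that ``$Q$ is $\FC$-normalized'' is exactly the statement that the subpair $(Q,b_Q)$ is \emph{fully normalized}, i.e. $\N_D(Q)\in\Syl_p(\N_G(Q,b_Q))$; and since, by Puig's saturation theorem, $\FC$-normalized implies $\FC$-centralized, the order $|\C_D(Q)|$ is also maximal among the $\FC$-conjugates of $Q$. I would invoke three standard facts: (a) restriction gives an inclusion-preserving bijection between $B$-subpairs above $(Q,b_Q)$ and $b_Q$-subpairs inside $L:=Q\C_G(Q)$; (b) a block $b$ of a finite group $H$ with a normal $p$-subgroup $Q\unlhd H$ has $Q$ inside every defect group and dominates a unique block of $H/Q$; (c) since $L\unlhd\N_G(Q)$, the Brauer correspondent $b_Q^{\N_G(Q)}$ exists.

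For (i), note $Q\unlhd L$, so every defect group $\delta$ of $b_Q$ contains $Q$. Taking the maximal $b_Q$-subpair $(\delta,c)$ and viewing it via (a) as a $B$-subpair above $(Q,b_Q)$, I conjugate it below the fixed maximal pair $(D,b_D)$; the conjugating $g$ then satisfies ${}^g\delta\le D\cap{}^gL={}^gQ\,\C_D({}^gQ)$ by Dedekind's modular law (as ${}^gQ\le D$), whence $|\delta|\le|{}^gQ\,\C_D({}^gQ)|\le|Q\,\C_D(Q)|$, using that ${}^gQ$ is $\FC$-conjugate to $Q$ and $|\C_D(Q)|$ is maximal. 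Conversely $Q\C_D(Q)$ is itself the first component of a $b_Q$-subpair in the chain below $(D,b_D)$, so $Q\C_D(Q)\le\delta$; hence $\delta=Q\C_D(Q)$. For the fusion system, a morphism of $\FC_{Q\C_D(Q)}(b_Q)$ is induced by some $g\in L$ moving $b_Q$-subpairs; as $g\in Q\C_G(Q)$ it acts on $Q$ as an inner automorphism, so the induced $\FC$-morphism $\psi$ on $QS$ has $\psi_Q\in\Inn(Q)$ and $\phi\in Q\C_\FC(Q)$. The opposite inclusion, and the resulting equality of the two systems on $Q\C_D(Q)$, is addressed below.

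For (ii), $b_Q^{\N_G(Q)}$ is the Brauer correspondent of (c); by the standard result for fully normalized subpairs its defect group is $\N_D(Q)$ (Knörr), the Sylow condition $\N_D(Q)\in\Syl_p(\N_G(Q,b_Q))$ being exactly what makes this tight. A morphism of $\FC_{\N_D(Q)}(b_Q^{\N_G(Q)})$ is induced by $g\in\N_G(Q,b_Q)$, which normalizes $Q$ and fixes $b_Q$, so the extension $\psi$ on $QS$ satisfies $\psi(Q)=Q$ and $\phi\in\N_\FC(Q)$. For (iii) I apply (b) to $Q\unlhd L$: $b_Q$ dominates a unique block $\overline{b_Q}$ of $L/Q=\C_G(Q)Q/Q$, whose defect group is $\delta(b_Q)/Q=Q\C_D(Q)/Q=\C_P(Q)Q/Q$ by (i). Because $Q$ is a normal $p$-subgroup it acts trivially on every simple $FL$-module, so inflation is a bijection $\IBr(\overline{b_Q})\to\IBr(b_Q)$ and $l(b_Q)=l(\overline{b_Q})$. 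Finally $Q\unlhd Q\C_\FC(Q)$ (each morphism extends to one fixing $Q$), and the surjection $L\to L/Q$ carries $b_Q$-subpairs to $\overline{b_Q}$-subpairs, identifying $\FC_{\C_P(Q)Q/Q}(\overline{b_Q})$ with the quotient system $Q\C_\FC(Q)/Q$.

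The main obstacle is completing the two fusion-system identifications in (i) and (ii), namely the reverse inclusions: that an abstract $\FC$-morphism $\psi$ on $QS$ with $\psi_Q\in\Inn(Q)$ (resp. $\psi(Q)=Q$) is genuinely realized by conjugation moving $b_Q$-subpairs in $L$ (resp. $b_Q^{\N_G(Q)}$-subpairs in $\N_G(Q)$). This requires the full strength of the bijection (a) to lift the relevant subpair inclusions compatibly with the fixed maximal pair $(D,b_D)$, and it is here that the hypothesis that $Q$ be $\FC$-normalized is indispensable: Puig's theorem then guarantees that $Q\C_\FC(Q)$ and $\N_\FC(Q)$ are \emph{saturated}, and two saturated systems on the same $p$-group sharing the same morphisms must coincide, so it suffices to compare generating maps. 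By contrast the defect-group computations are routine once Dedekind's modular law and the Sylow consequence of full normalization are in hand.
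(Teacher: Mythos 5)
The paper states this theorem without proof (it is a survey; the result is assembled from Alperin--Broué, Olsson, Külshammer and Linckelmann), so there is no in-text argument to compare against. Judged on its own, your outline follows the standard route and gets the routine parts right: the translation of ``$\FC$-normalized'' into $\N_D(Q)\in\Syl_p(\N_G(Q,b_Q))$ together with maximality of $|\C_D(Q)|$, the Dedekind computation $D\cap{}^g\bigl(Q\C_G(Q)\bigr)={}^gQ\,\C_D({}^gQ)$ giving the defect group in (i), the Fong--Reynolds/Knörr input for (ii), and the domination statement plus $l(b_Q)=l(\overline{b_Q})$ via inflation of simple modules in (iii) are all correct and correctly sourced. (You also silently correct the paper's typo $\C_P(Q)$ for $\C_D(Q)$.)

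However, the gap you flag at the end is a genuine one, and your proposed way of closing it does not work as stated. Knowing that $\FC_{Q\C_D(Q)}(b_Q)\subseteq Q\C_\FC(Q)$ and that both systems are saturated does not force equality: a saturated fusion system can properly contain another saturated fusion system on the same $p$-group (e.g.\ the trivial system sits inside every other one), so ``two saturated systems sharing the same morphisms must coincide'' is a tautology rather than a criterion, and reducing to generators via Alperin's fusion theorem still leaves you having to realize each essential automorphism of $Q\C_\FC(Q)$ by an element of $Q\C_G(Q)$ moving $b_Q$-subpairs --- which is exactly the reverse inclusion you set out to prove. The missing ingredient is the compatibility of inclusion of Brauer pairs with the correspondence (a) in the strong form: if $\psi\in\Hom_\FC(QS,QT)$ is induced by $g\in G$ with ${}^g(QS,b_{QS})\le(QT,b_{QT})$ and $\psi_Q\in\Inn(Q)$, one must modify $g$ by an element of $Q$ so that it fixes $(Q,b_Q)$, hence lies in $\N_G(Q,b_Q)$, and then use that ${}^g(S,\cdot)\le{}^g(T,\cdot)$ descends to an inclusion of $b_Q$-subpairs inside $Q\C_G(Q)$; the analogous step is needed for (ii). This is precisely the content of the uniqueness/transitivity properties of the partial order on subpairs from \autoref{deffusbl} and of Olsson's refinement, and it has to be carried out rather than absorbed into a saturation remark. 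The same issue recurs, in milder form, in your identification of $\FC_{\C_D(Q)Q/Q}(\overline{b_Q})$ with $Q\C_\FC(Q)/Q$ in (iii), where you need that the Brauer correspondence commutes with the quotient map $L\to L/Q$ on subpairs, not merely that it sends subpairs to subpairs.
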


In the situation of \autoref{local} the map $S\mapsto S/Q$ is a bijection between the set of $Q\C_\FC(Q)$-essential subgroups and the set of $Q\C_\FC(Q)/Q$-essential subgroups. This allows inductive arguments.

\begin{Thm}[\textsc{Brauer}]\label{brauer}
Let $B$ be a block of $G$ with defect group $D$ and $\FC=\FC_D(B)$. Let $\mathcal{X}\subseteq D$ be a set of representatives for the $\FC$-conjugacy classes of $D$ such that $\langle x\rangle$ is $\FC$-normalized for $x\in \mathcal{X}$. Then
\[k(B)=\sum_{x\in \mathcal{X}}l(b_x)=\sum_{x\in \mathcal{X}}l(\overline{b_x}),\]
where $b_x:=b_{\langle x\rangle}$.
In particular, $k(B)-l(B)$ is locally determined. 
\end{Thm}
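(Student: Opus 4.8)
The plan is to reduce the assertion to Brauer's classical subsection counting formula and then to reinterpret its terms through the fusion system $\FC=\FC_D(B)$. Recall that a $B$-\emph{subsection} is a pair $(u,b_u)$ in which $u$ is a $p$-element of $G$ and $b_u$ is a block of $\C_G(u)$ with $b_u^G=B$; equivalently $(\langle u\rangle,b_u)$ is a $B$-subpair. The formula I want to invoke states
\[k(B)=\sum_{(u,b_u)}l(b_u),\]
where $(u,b_u)$ runs over a set of representatives for the $G$-conjugacy classes of $B$-subsections. Granting this, the first equality will follow by matching these $G$-classes with the $\FC$-conjugacy classes of elements parametrized by $\mathcal{X}$, and the second equality will be read off from \autoref{local}.

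The analytic core, and the main obstacle, is the subsection formula itself, which I would derive from the theory of generalized decomposition numbers. For a $p$-element $u$, a character $\chi\in\Irr(B)$ and a Brauer character $\phi$ of $\C_G(u)$ one sets $d^u_{\chi\phi}$ to be the coefficient of $\phi$ when $\chi(us)$ is expanded in $\IBr(\C_G(u))$ over $p$-regular $s\in\C_G(u)$. Brauer's second main theorem gives $d^u_{\chi\phi}=0$ unless $\phi$ lies in a block $b_u$ with $b_u^G=B$; assembling the $d^u_{\chi\phi}$ as $u$ ranges over a transversal of the $G$-classes of $p$-elements and $\phi$ over the corresponding Brauer characters produces a matrix with rows indexed by $\Irr(B)$. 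The orthogonality relations for generalized decomposition numbers force this matrix to be square and nonsingular, and counting its columns yields exactly $\sum_{(u,b_u)}l(b_u)=k(B)$. Proving the second main theorem and these orthogonality relations is the genuinely hard input, which in a lecture I would cite rather than reprove.

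Next I would translate the indexing set. Every $B$-subpair is $G$-conjugate into $(D,b_D)$, so each subsection is $G$-conjugate to one of the form $(x,b_x)$ with $x\in D$ and $(\langle x\rangle,b_x)\le(D,b_D)$; here $b_x=b_{\langle x\rangle}$ is the unique such subpair. By the definition of $\FC_D(B)$, two elements $x,y\in D$ lie in the same $\FC$-class (via a morphism sending $x$ to $y$) exactly when there is $g\in G$ with ${}^gx=y$ and ${}^g(\langle x\rangle,b_x)=(\langle y\rangle,b_y)$, which is precisely $G$-conjugacy of the subsections $(x,b_x)$ and $(y,b_y)$. Thus $x\mapsto(x,b_x)$ induces a bijection between the $\FC$-conjugacy classes of elements of $D$ and the $G$-conjugacy classes of $B$-subsections; within each class I may choose the representative $x$ so that $\langle x\rangle$ is $\FC$-normalized, as in the hypothesis. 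This delivers $k(B)=\sum_{x\in\mathcal{X}}l(b_x)$.

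Finally, for each $x\in\mathcal{X}$ the subgroup $\langle x\rangle$ is $\FC$-normalized, so \autoref{local}(iii) applies with $Q=\langle x\rangle$ and gives $l(b_x)=l(\overline{b_x})$, which is the second equality. For the closing remark, the $\FC$-class of $x=1$ contributes the trivial subsection $(1,B)$ with term $l(B)$, whence $k(B)-l(B)=\sum_{1\neq x\in\mathcal{X}}l(b_x)$; every summand on the right is computed inside a proper local subgroup $\C_G(x)$, which is the precise sense in which $k(B)-l(B)$ is locally determined.
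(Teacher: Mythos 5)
The paper states this theorem without proof, so there is nothing to compare against; judged on its own, your argument is the standard and correct route: Brauer's second main theorem plus the nonsingularity/orthogonality of the generalized decomposition matrix give $k(B)=\sum_{(u,b_u)}l(b_u)$ over $G$-classes of subsections, the conjugacy of maximal subpairs identifies these classes with the $\FC$-classes of elements of $D$ via $x\mapsto(x,b_x)$, and \autoref{local}(iii) converts $l(b_x)$ into $l(\overline{b_x})$. The only imprecision is the closing phrase ``proper local subgroup'': for $1\ne x\in\Z(G)\cap D$ one has $\C_G(x)=G$, and the correct sense in which $k(B)-l(B)$ is locally determined is that each $l(b_x)=l(\overline{b_x})$ is computed in a block of $\C_G(x)\langle x\rangle/\langle x\rangle$ with strictly smaller defect, which is what makes induction possible.
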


The fusion system of a block does not determine $k(B)$ or $l(B)$. For example, the group 
\[G=\mathtt{SmallGroup}(72,23)\cong C_3^2\rtimes D_8\] with $|\Z(G)|=2$ 
from the \emph{small groups library} has two $3$-blocks $B_0$, $B_1$ with defect group $D=C_3^2$ and fusion system $\FC_D(S_3^2)$, but $l(B_0)=4$ and $l(B_1)=1$. We need an additional ingredient: 
For an $F$-algebra $A$ let $z(A)$ be the number of simple projective $A$-modules up to isomorphism.

\begin{Con}[\textsc{Alperin}'s weight conjecture]\label{conalp}
Let $B$ be a block of $G$ with defect group $D$ and $\FC=\FC_D(B)$. Let $\mathcal{R}$ be a set of representatives for the $\FC$-conjugacy classes of self-centralizing, $\FC$-centralized subgroups of $D$.
Then
\[l(B)=\sum_{Q\in\mathcal{R}}{z\bigl(F_{\gamma_Q}\Out_{\FC}(Q)\bigr)}\]
where $\gamma_Q\in\cohom^2(\Out_{\FC}(Q),F^\times)$ is the so-called \emph{Külshammer--Puig class}.
\end{Con}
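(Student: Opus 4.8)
The plan is to recognize first that \autoref{conalp} is \emph{Alperin's weight conjecture}, a central open problem for which no general proof is known; accordingly, what follows is a strategy rather than a closable argument. The only realistic route is the \emph{reduction to simple groups}: one shows that if every finite (quasi-)simple group satisfies a suitable \emph{inductive} strengthening of the conjecture, then every block of every finite group does. The first step is therefore Clifford theory. Given a block $B$ of $G$ with a normal subgroup $N\unlhd G$, I would use the Fong--Reynolds correspondence and the second Fong reduction (\autoref{Fong2}) to replace $B$ by a block of a smaller or structurally simpler group having the same fusion system, reducing to the case where $G$ has a unique minimal normal subgroup. Because both the fusion system and the invariants $k(B)$, $l(B)$ are preserved by these reductions, the right-hand side of \autoref{conalp} is unchanged, so it suffices to treat the quasi-simple building blocks.

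For the local right-hand side I would exploit the subpair machinery already assembled. Brauer's theorem (\autoref{brauer}) expresses $k(B)$, and hence the locally-determined quantity $k(B)-l(B)$, as a sum over $\FC$-classes of cyclic subgroups, while \autoref{local} identifies, for each $\FC$-normalized subpair $(Q,b_Q)$, the fusion system $Q\C_\FC(Q)$ of $b_Q$ and the dominated block $\overline{b_Q}$ of $\C_G(Q)Q/Q$ together with the equality $l(b_Q)=l(\overline{b_Q})$. The summand $z\bigl(F_{\gamma_Q}\Out_\FC(Q)\bigr)$ is exactly the count of projective simple modules of the twisted local block attached to $\overline{b_Q}$, so \autoref{local} is what licenses induction on $|G|$: each term on the right is an invariant of a strictly smaller local block, and the Külshammer--Puig class $\gamma_Q\in\cohom^2(\Out_\FC(Q),F^\times)$ records the obstruction to splitting this local data.

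For \emph{constrained} blocks the picture collapses to something checkable. Here $\FC$ has a model (\autoref{thmmodel}), and Külshammer's theorem realizes $B$ as a twisted group algebra of that model; in the $p$-solvable case this makes $B$ Morita equivalent to a twisted group algebra over $G/N$, where the conjecture is already a theorem. The Alperin--Goldschmidt fusion theorem (\autoref{AFT}) then supplies the inductive scaffolding in general: since fusion is generated by $\Aut_\FC(P)$ together with automorphisms of the essential subgroups, and every essential subgroup contains $\pcore_p(\FC)$, one organizes the sum in \autoref{conalp} according to the essential and self-centralizing radical subgroups, matching the classical weight count over radical $p$-subgroups.

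The hard part --- and the reason this remains a conjecture --- is the simple-group input. The reduction above converts \autoref{conalp} into an \emph{inductive} condition that must be verified for every finite simple group, a statement considerably stronger than the conjecture for the group itself, since it constrains the behaviour of characters under the outer automorphism group and under central extensions. There is no uniform argument: alternating and sporadic groups and groups of Lie type in the defining characteristic can be treated directly, but for groups of Lie type in \emph{non-defining} characteristic the verification rests on delicate Deligne--Lusztig theory and remains incomplete for infinitely many cases. Thus the plan yields a research programme whose only genuine obstruction is the case-by-case analysis of simple groups built on the classification, rather than a single decisive step.
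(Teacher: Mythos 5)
This statement is Alperin's weight conjecture, which the paper presents as an open conjecture with no proof; you are right that no proof can be given, and your outline of the reduction-to-simple-groups programme (Fong reductions, the local analysis via subpairs and the Külshammer--Puig classes, the known $p$-solvable case, and the outstanding inductive verification for quasi-simple groups) is an accurate account of the current state of the art. Since the paper offers no argument to compare against, there is nothing further to check.
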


\begin{Ex}\hfill
\begin{enumerate}[(i)]
\item Suppose that $B$ is controlled in the situation of \autoref{conalp}. Then $z(F_{\gamma_Q}\Out_{\FC}(Q))=0$ for $Q<D$, since $\N_D(Q)/Q$ is a non-trivial normal $p$-subgroup of $\Out_{\FC}(Q)$. Hence, Alperin's conjecture becomes $l(B)=z(F_{\gamma_D}\Out_\FC(D))$. If in addition $B$ is the principal block (or $\Out_\FC(D)$ has trivial Schur multiplier), then $l(B)=z(F\Out_\FC(D))=k(\Out_\FC(D))$. 

\item Let $B$ be the principal $2$-block of $S_4$ with $D=\langle x,y\rangle$ as in \autoref{excontrol}. The self-centralizing, $\FC$-centralized subgroups are
$Q_1=\langle x^2,y\rangle$, $Q_2=\langle x^2,xy\rangle$, $Q_3=\langle x\rangle$ and $Q_4=D$. Alperin's conjecture becomes
\[l(B)=\sum_{i=1}^4z\bigl(F_{\gamma_{Q_i}}\Out_\FC(Q_i)\bigr)=z(FS_3)+2z(FC_2)+z(F)=1+0+1=2.\]
\end{enumerate}
\end{Ex}

\begin{Def}
The \emph{height} $h\ge 0$ of $\chi\in\Irr(B)$ is defined by $\chi(1)_p=p^h|G:D|_p$. Let $k_h(B)$ be the number of $\chi\in\Irr(B)$ with height $h$. 
\end{Def}

\begin{Thm}[\textsc{Broué--Puig}, \textsc{Robinson}]
Let $B$ be a block with defect group $D$. Then 
\begin{enumerate}[(i)]
\item $|D/\foc(B)|$ divides $k_0(B)$ with equality if and only if $B$ is nilpotent.
\item $|\Z(D)\foc(B)/\foc(B)|$ divides $k_h(B)$ for all $h\ge 0$.
\end{enumerate}
\end{Thm}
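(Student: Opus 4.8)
The plan is to deduce both divisibilities from a single device: a group action on $\Irr(B)$ coming from the Broué--Puig $*$-construction, under which each statement becomes the assertion that a suitable group acts \emph{freely} on the relevant set of characters, so that the orders divide by orbit counting.

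First I would set $A:=D/\foc(B)$, which is abelian since $\foc(B)\supseteq D'$, and take $L:=\Irr(A)$, realised concretely as the group of linear characters $\lambda$ of $D$ that are trivial on $\foc(B)$; these are exactly the $\FC$-stable linear characters, where $\FC=\FC_D(B)$. For each $\FC$-class of $p$-elements $u\in D$ and each $\phi\in\IBr(b_u)$ one has the generalized decomposition numbers $d^u_{\chi\phi}$, characterised by $\chi(us)=\sum_\phi d^u_{\chi\phi}\phi(s)$ for $p$-regular $s\in\C_G(u)$. I would then define $\lambda*\chi$ by imposing $d^u_{(\lambda*\chi)\phi}=\lambda(u)\,d^u_{\chi\phi}$ for all $u$ and $\phi$; the $\FC$-stability of $\lambda$ is precisely what makes these prescriptions compatible across $\FC$-conjugate $u$, and the Broué--Puig argument shows that the resulting class function is, up to an absorbable sign, again a member of $\Irr(B)$. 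Establishing this well-definedness is the substantial but standard foundational step; granting it, $L$ acts on $\Irr(B)$.

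Heights are then easy to control: because $\lambda(1)=1$, the column $u=1$ is untouched, so $\lambda*\chi$ and $\chi$ share all ordinary decomposition numbers $d^1_{\chi\phi}$, whence $(\lambda*\chi)(1)=\sum_\phi d^1_{\chi\phi}\phi(1)=\chi(1)$. Thus the action preserves degrees, a fortiori heights, and $L$ permutes each height set $\Irr_h(B)$, in particular $\Irr_0(B)$. For part (i) it now suffices that $L$ act \emph{freely} on $\Irr_0(B)$: then $k_0(B)=|\Irr_0(B)|$ is divisible by $|L|=|A|=|D/\foc(B)|$. Freeness unwinds to the statement that if $\lambda*\chi=\chi$ with $\chi$ of height zero then $\lambda(u)=1$ whenever some $d^u_{\chi\phi}\ne 0$, so I must show that for a height-zero $\chi$ the $p$-elements $u$ carrying a non-zero generalized decomposition number generate $A$. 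This non-vanishing is the crux of the theorem and the step I expect to be hardest; the plan is to extract it from the orthogonality relations among the $d^u_{\chi\phi}$ together with the characterisation of height zero as the presence of a $p$-adic unit among these numbers, which pins down every coordinate of $\lambda$ on $A$. The equality clause then reads: $L$ is \emph{transitive} on $\Irr_0(B)$, and I would identify this collapse of the local structure with nilpotency of $B$ (this identification being the block-theoretic Frobenius criterion of Broué--Puig); conversely a nilpotent block has $\foc(B)=D'$ and, by Puig's theorem on nilpotent blocks, exactly $|D:D'|=|A|$ linear (hence height-zero) characters.

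For part (ii) I would run the same machine but with the subgroup $L_0\le L$ dual to $\Z(D)\foc(B)/\foc(B)$, of order $|\Z(D)\foc(B)/\foc(B)|$, and prove that \emph{this} subgroup acts freely on \emph{all} of $\Irr(B)$, not merely on $\Irr_0(B)$. The decisive point, and the difference from part (i), is that for a central $z\in\Z(D)$ one has $D\le\C_G(z)$, so $b_z$ retains the full defect group $D$ and the $z$-section detects \emph{every} $\chi\in\Irr(B)$ irrespective of height: the matrix $(d^z_{\chi\phi})$ has no zero row, and the central $z$ with a non-zero entry generate $\Z(D)$ modulo $\foc(B)$. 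Hence a nontrivial element of $L_0$ can fix no character, the action is free on each $\Irr_h(B)$, and orbit counting yields $|\Z(D)\foc(B)/\foc(B)|\mid k_h(B)$ for every $h$. The work here is the unconditional, all-height non-vanishing of these central generalized decomposition numbers (via an orthogonality argument showing $\chi$ cannot vanish identically on the $z$-section), together with the routine duality bookkeeping needed to pass from the central values to a free subgroup action of the correct order.
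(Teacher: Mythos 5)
The paper states this theorem without proof (it is a survey citing Broué--Puig and Robinson), so there is no internal argument to compare against; your proposal has to be measured against the cited literature, and its architecture is indeed the standard one: let $\Irr(D/\foc(B))$ act on $\Irr(B)$ by the Broué--Puig $*$-construction, observe that the $u=1$ column is untouched so degrees and heights are preserved, and count orbits. Your treatment of (ii) is essentially Robinson's: for $z\in\Z(D)$ the subsection $(z,b_z)$ is major, Brauer's non-vanishing of the diagonal contribution $m^{(z,b_z)}_{\chi\chi}$ shows the row $(d^z_{\chi\phi})_\phi$ is nonzero for \emph{every} $\chi$, so the stabilizer of any $\chi$ annihilates $\Z(D)\foc(B)/\foc(B)$ and every orbit has length divisible by $|\Z(D)\foc(B)/\foc(B)|$. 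One small repair: phrase it exactly this way (all orbit lengths divisible by the index of the annihilator) rather than via a subgroup $L_0$ "dual to" $\Z(D)\foc(B)/\foc(B)$ --- a complement of that order need not exist inside $\Irr(D/\foc(B))$ when the relevant extension of abelian groups does not split, but the divisibility argument never needs one. The freeness on $\Irr_0(B)$ in (i) is likewise a genuine theorem of Robinson, proved by a valuation argument on contributions; you correctly flag it as the crux and leave it unexecuted, which is acceptable for a proposal, though "orthogonality plus a $p$-adic unit among the $d^u_{\chi\phi}$" is too vague to count as a plan.

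The genuine gap is the equality clause of (i). You reduce equality to transitivity of the $*$-action on $\Irr_0(B)$ (correct) and then assert that transitivity is equivalent to nilpotency "by the block-theoretic Frobenius criterion of Broué--Puig". That criterion says nothing of the sort: it characterises nilpotency by local data on Brauer pairs, not by $k_0(B)=|D:\foc(B)|$. The implication "$k_0(B)=|D:\foc(B)|$ $\Rightarrow$ $B$ nilpotent" is the main theorem of Kessar--Linckelmann--Navarro (2015, cited in this paper's bibliography), and its only known proof runs through the classification of finite simple groups: transitivity of the degree-preserving $*$-action forces all height-zero characters to have equal degree, and the statement that this forces nilpotency is the Malle--Navarro theorem, a CFSG-dependent result some thirty-five years later than Broué--Puig. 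So this step cannot be waved through as classical; as written, the "only if" half of the equality statement is unproved. The converse direction (nilpotent $\Rightarrow$ equality) via Puig's structure theorem is fine, except that the $|D:D'|$ height-zero characters of a nilpotent block correspond to the linear characters of $D$ but are not themselves linear.
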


If $D$ is abelian, then $|\Z(\FC_D(B))|$ divides $k(B)$, because $D=\foc(B)\times\Z(\FC_D(B))$. 

\emph{Dade's conjecture}, expressing $k_h(B)$ in terms of alternating sums, has been reformulated in terms of fusion systems by Robinson (\emph{ordinary weight conjecture}). 
Kessar--Linckelmann--Lynd--Semeraro have generalized this and other conjectures in block theory to statements on abstract fusion systems.

\begin{small}

\end{small}

\end{document}